\colorlet{myred}{red!80!black}
\colorlet{myblue}{blue!80!black}
\colorlet{mygreen}{green!80!black}
\def\thm@space@setup{\thm@preskip=4pt
\thm@postskip=2pt}
\newtheorem{theorem}{Theorem}[section]
\newtheorem{lemma}[theorem]{Lemma}
\newtheorem{corollary}[theorem]{Corollary}
\newtheorem{fact}[theorem]{Fact}
\newtheorem{proposition}[theorem]{Proposition}
\newtheorem{mainthm}{Theorem}
\theoremstyle{definition} 
\newtheorem{definition}[theorem]{Definition}
\newtheorem{tabel}[theorem]{Table}
\newtheorem{remark}[theorem]{Remark}
\DeclareMathOperator{\Sym}{\mathrm{Sym}}
\newcommand{\ZZ}{{\mathbb Z}}
\newcommand{\NN}{{\mathbb N}}
\newcommand{\PP}{{\mathbb P}}
\newcommand{\RR}{{\mathbb R}}
\newcommand{\CC}{{\mathbb C}}
\newcommand{\cO}{{\mathcal O}}
\newcommand{\cL}{{\mathcal L}}
\newcommand{\cE}{{\mathcal E}}
\newcommand{\ra}{\rightarrow}
\newcommand{\Hom}{\textrm{Hom}}
\newcommand{\bd}[1]{\mathbf{#1}}  % for bolding symbols
\newcommand{\ol}[1]{\overline{#1}}
\begin{document}

\begin{abstract}
The article covers the unique complex K3 surface with maximal Picard
rank and discriminant four. We discuss smooth, rational curves and
identify generators of its automorphism group with certain Cremona
transformations of $\PP^2$. This gives a geometric perspective of
Vinberg's results \cite{VIN}.
\end{abstract}

\author{Łukasz Sienkiewicz}
\address{Instytut Matematyki UW, Banacha 2, 02-097 Warszawa, Poland}
\email{lusiek@mimuw.edu.pl}
\title{Vinberg's $X_4$ revisited}
\thanks{The author was supported by Polish National Science Center project 2013/08/A/ST1/00804.}
\maketitle

\section*{Introduction}
In  \cite{SHIN} Shioda and Inose proved a classification theorem for complex K3 surfaces with maximal Picard rank in terms of their transcendental lattices. In the course of the proof they discussed two K3 surfaces with maximal Picard rank which are the simplest in the sense that their transcendental lattices have the smallest possible discriminants equal to $3$ and $4$. Then Vinberg in his article \cite{VIN} called these surfaces the most algebraic K3 surfaces. He gave a complete description of automorphism groups of these two surfaces as well as several examples of their birational models. In the article \cite{OGZH} the authors classified these surfaces in terms of ramification locus of some special automorphisms of these surfaces and related them to extremal log Enriques surfaces. Recently in \cite{KAPUSTKOWIEIFERAJNA} the authors used the Hilbert scheme of two points on the K3 surface having discriminant equal to four to solve the question concerning the cardinality of complete families of incident planes in $\PP^5$ originally posed by Morin in \cite{MOR}. This is related to O'Grady's research \cite{Ogrady} on Hyperk{\"a}hler manifolds.\\
This article is devoted to study of a unique K3 surface $X_4$ with Picard rank $20$ and discriminant equal to four. We construct $X_4$ as a double covering $\pi:X_4\ra Y_4$ of a smooth, rational surface $Y_4$. Surface $Y_4$ is defined as a blow up $m:Y_4\ra S_5$ of the Del Pezzo surface $S_5$ of degree $5$ in all intersection points of configuration of $(-1)$-curves on $S_5$. In the book \cite{ALEXNIK} Alexeev and Nikulin describe certain log Del Pezzo surfaces as quotients of K3 surfaces by non-symplectic involutions. In particular, $Y_4$ is a log Del Pezzo surface. So the above construction is an important special case of their procedure.\\
In this paper we describe geometrically generators of the group $\mathrm{Aut}(X_4)$. The following theorem relates automorphisms of $X_4$ and $Y_4$. Here $\sigma$ is a unique nontrivial automorphism of the covering $\pi:X_4\ra Y_4$.
\begin{mainthm}[Theorem \ref{exactaut}]
There exists a short exact sequence of groups
\begin{center}
\begin{tikzpicture}
[description/.style={fill=white,inner sep=2pt}]
\matrix (m) [matrix of math nodes, row sep=3em, column sep=2em,text height=1.5ex, text depth=0.25ex] 
{  1 & \left<\sigma\right>&\mathrm{Aut}(X_4) & \mathrm{Aut}(Y_4)&1    \\} ;
\path[->,font=\scriptsize]  
(m-1-1) edge node[auto] {$ $} (m-1-2)
(m-1-2) edge node[auto] {$ $} (m-1-3)
(m-1-3) edge node[auto] {$ \theta$} (m-1-4)
(m-1-4) edge node[auto] {$ $} (m-1-5);
\end{tikzpicture}
\end{center}
and $\left<\sigma \right>$ is a central subgroup of $\mathrm{Aut}(X_4)$.
\end{mainthm}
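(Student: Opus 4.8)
The plan is to produce the homomorphism $\theta$ by descending automorphisms of $X_4$ along $\pi$, and to read off \emph{both} the well-definedness of $\theta$ and the centrality of $\langle\sigma\rangle$ from one and the same fact: every $g\in\mathrm{Aut}(X_4)$ commutes with $\sigma$. Granting that, the exactness statements are routine. The inclusion $\langle\sigma\rangle\hookrightarrow\mathrm{Aut}(X_4)$ is injective because $\sigma$ is a nontrivial involution; exactness in the middle is the identification $\ker\theta=\langle\sigma\rangle$; and exactness on the right is surjectivity of $\theta$, i.e.\ that every automorphism of $Y_4$ lifts to $X_4$.

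The key computation is that $\sigma^{\ast}$ acts as the identity on $\mathrm{NS}(X_4)$ and as $-\mathrm{id}$ on the transcendental lattice $T(X_4)$ (which has rank $2$ since $\rho(X_4)=20$). Indeed, $Y_4$ is rational, so $\sigma$ is non-symplectic, giving $\sigma^{\ast}\omega_{X_4}=-\omega_{X_4}$ and hence $\sigma^{\ast}=-\mathrm{id}$ on $T(X_4)\otimes\CC=\CC\omega_{X_4}\oplus\CC\overline{\omega}_{X_4}$; in particular $H^{2}(X_4,\QQ)^{\sigma}\subseteq H^{1,1}(X_4)\cap H^{2}(X_4,\QQ)=\mathrm{NS}(X_4)\otimes\QQ$, so
\[
\operatorname{rk}\mathrm{NS}(X_4)^{\sigma}=\dim H^{2}(X_4,\QQ)^{\sigma}=b_2(Y_4)=5+15=20,
\]
where the $15$ is the number of intersection points of the ten $(-1)$-curves on $S_5$ (the Petersen configuration) blown up by $m\colon Y_4\to S_5$. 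Since $\operatorname{rk}\mathrm{NS}(X_4)=20$ as well, the invariant sublattice $\mathrm{NS}(X_4)^{\sigma}$ has full rank, so the finite-order isometry $\sigma^{\ast}|_{\mathrm{NS}(X_4)}$ fixes a finite-index sublattice pointwise and is therefore the identity. Now for any $g\in\mathrm{Aut}(X_4)$ the Hodge isometry $g^{\ast}$ preserves $\mathrm{NS}(X_4)$ and $T(X_4)$, and on each summand it commutes with $\sigma^{\ast}$ (which is $\pm\mathrm{id}$ there); as $\mathrm{NS}(X_4)\oplus T(X_4)$ has finite index in $H^{2}(X_4,\ZZ)$, the integral isometries $(\sigma g)^{\ast}=g^{\ast}\sigma^{\ast}$ and $(g\sigma)^{\ast}=\sigma^{\ast}g^{\ast}$ coincide, whence $g\sigma=\sigma g$ by injectivity of $\mathrm{Aut}(X_4)\to O(H^{2}(X_4,\ZZ))$ (global Torelli). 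This is the centrality of $\langle\sigma\rangle$, and it also lets us set $\theta(g)$ equal to the unique automorphism of $Y_4=X_4/\sigma$ with $\theta(g)\circ\pi=\pi\circ g$; functoriality of the quotient makes $\theta$ a group homomorphism.

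For exactness in the middle: if $\theta(g)=\mathrm{id}$ then $\pi\circ g=\pi$, so $g$ is an automorphism of the Galois double cover $\pi$ over $Y_4$; restricting to the connected \'etale locus above $Y_4\setminus B$ (with $B$ the branch curve) it is a deck transformation, hence $g\in\{\mathrm{id},\sigma\}$ and $\ker\theta=\langle\sigma\rangle$. For surjectivity, present $X_4$ via the sheaf of algebras $\cO_{Y_4}\oplus\cL^{-1}$ with $\cL^{\otimes 2}\cong\cO_{Y_4}(B)$; the K3 condition $0=K_{X_4}=\pi^{\ast}(K_{Y_4}\otimes\cL)$ together with torsion-freeness of $\mathrm{Pic}(Y_4)$ forces $\cL\cong\cO_{Y_4}(-K_{Y_4})$, which is canonical and so is fixed by every $h\in\mathrm{Aut}(Y_4)$. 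Hence each such $h$ lifts to $X_4$ — with exactly two lifts, interchanged by $\sigma$ — the moment one knows $h^{\ast}B=B$, and the whole argument reduces to showing that the branch curve is $\mathrm{Aut}(Y_4)$-invariant. I expect this last point to be the main obstacle. I would settle it by describing $B$ intrinsically on $Y_4$ — e.g.\ as the unique reduced member of $\lvert -2K_{Y_4}\rvert$ whose double cover has at worst Du Val singularities — or, more concretely, by first identifying $\mathrm{Aut}(Y_4)$ with $\mathrm{Aut}(S_5)\cong\mathfrak{S}_{5}$ (using that $\mathrm{Aut}(Y_4)$ permutes the $15$ exceptional curves of $m$, hence descends to $S_5$) and then exhibiting $B$ as a divisor assembled $\mathfrak{S}_5$-equivariantly from the line configuration on $S_5$; either description yields $h^{\ast}B=B$. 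With $\theta$ well defined, with central kernel $\langle\sigma\rangle$, and surjective, the short exact sequence and the centrality claim follow.
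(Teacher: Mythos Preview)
Your treatment of centrality and of $\ker\theta$ is fine and close in spirit to the paper's: the paper also shows $\sigma^{\ast}=1$ on $S_{X_4}$ (via $\pi^{\ast}\mathrm{Pic}(Y_4)\subset\mathrm{Pic}(X_4)$ being of full rank $20$) and then, instead of invoking global Torelli directly, uses the fiber-product description $\mathrm{Aut}(X_4)\cong U_{X_4}\times_{\mathrm{Aut}(D_{X_4})}O_+(S_{X_4})$ together with the fact that $U_{X_4}$ is cyclic. Your route through $\sigma^{\ast}=\pm 1$ on the two summands and injectivity of $\mathrm{Aut}(X_4)\to O(H^2)$ is a perfectly good alternative.

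The genuine gap is in the surjectivity step, specifically in how you propose to prove $h^{\ast}B=B$ for every $h\in\mathrm{Aut}(Y_4)$. Your option (b) is incorrect: $\mathrm{Aut}(Y_4)$ is \emph{not} $\mathfrak{S}_5$, and automorphisms of $Y_4$ need not permute the $15$ exceptional curves of $m$ among themselves, hence need not descend to regular automorphisms of $S_5$. Indeed, the paper constructs (Proposition~\ref{quadratic}) an involution $f_Q\in\mathrm{Aut}(Y_4)$, coming from the standard quadratic Cremona transformation of $\PP^2$, which descends to $S_5$ only birationally; combined with Vinberg's description of $O_+(S_{X_4})$ and the very exact sequence you are proving, $\mathrm{Aut}(Y_4)$ is in fact infinite. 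Your option (a) is not obviously wrong, but you have not argued uniqueness of such a member of $|-2K_{Y_4}|$, so it remains speculative.

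The paper's fix is much cleaner and you should adopt it: one shows (Proposition~\ref{classes}) that every irreducible curve $F\subset Y_4$ with $F^2<0$ is smooth rational with $F^2\in\{-1,-4\}$, and that the $(-4)$-curves are exactly the ten branch components $F_{ij}$. The key point is that if $\pi^{\ast}F$ is irreducible (equivalently $F\not\subset B$), then $F^2=\tfrac12(\pi^{\ast}F)^2=-1$, while if $F\subset B$ then $\pi^{\ast}F=2L$ with $L$ a $(-2)$-curve and $F^2=-4$. Since ``the sum of all $(-4)$-curves'' is an intrinsic description, every $h\in\mathrm{Aut}(Y_4)$ satisfies $h^{\ast}B=B$, and then your lifting argument (or the paper's Proposition~\ref{lifting}) finishes surjectivity.
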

\noindent
The covering morphism $\pi$ has ten branch curves $F_{ij}$ for $0\leq i<j\leq 4$ and these are all $(-4)$-curves on $Y_4$ (Proposition \ref{classes}). Let $\Sigma_{\bd{b}}$ be the group of bijections of the set $\bd{b}$ of branch curves of $\pi$. The action of $\mathrm{Aut}(Y_4)$ on $(-4)$-curves of $Y_4$ gives rise to a homomorphism of groups $\mathrm{Aut}(Y_4)\ra \Sigma_{\bd{b}}$. We define a subgroup $G\subseteq \Sigma_{\bd{b}}$ as the image of an injective homomorphism $\Sigma_5\ra \Sigma_{\bd{b}}$ sending $\tau \in \Sigma_5$ to the bijection given by $F_{ij}\mapsto F_{\tau(i)\tau(j)}$. Group $G$ can be also identified with the group of automorphisms of the Petersen graph (\ref{fig1}).
\begin{mainthm}[Theorem \ref{main3}]
The image of the homomorphism $\mathrm{Aut}(Y_4)\ra \Sigma_{\bd{b}}$ is $G$. Moreover, the epimorphism $\mathrm{Aut}(Y_4)\ra G$ admits a section given by homomorphism $G\cong \mathrm{Aut}(S_5)\ra \mathrm{Aut}(Y_4)$ that lifts an automorphism of $S_5$ along birational contraction $m:Y_4\ra S_5$.
\end{mainthm}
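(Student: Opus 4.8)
The plan is to prove the two assertions in turn, deriving the surjectivity $\mathrm{Aut}(Y_4)\twoheadrightarrow G$ from the construction of the section.

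\textbf{Constructing the section.} It is classical that $\mathrm{Aut}(S_5)\cong\Sigma_5$ and that the induced action on the ten $(-1)$-curves of $S_5$ is exactly the action of $\Sigma_5$ on the vertices of the Petersen graph; this identifies $\mathrm{Aut}(S_5)$ with $G$. Since the ten $(-1)$-curves are the only negative curves on $S_5$, every $\alpha\in\mathrm{Aut}(S_5)$ permutes them, hence permutes the fifteen points where two of them meet — and these are precisely the centres of the blow-up $m\colon Y_4\to S_5$. By functoriality of blowing up, $\alpha$ lifts uniquely to $\widetilde\alpha\in\mathrm{Aut}(Y_4)$ with $m\circ\widetilde\alpha=\alpha\circ m$, and uniqueness makes $\alpha\mapsto\widetilde\alpha$ a group homomorphism $\mathrm{Aut}(S_5)\to\mathrm{Aut}(Y_4)$. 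Because $m$ sends each branch curve $F_{ij}$ to a $(-1)$-curve of $S_5$ and intertwines the $\widetilde\alpha$-action with the $\alpha$-action, the composite $G\cong\mathrm{Aut}(S_5)\to\mathrm{Aut}(Y_4)\to\Sigma_{\bd{b}}$ is simply the inclusion $G\hookrightarrow\Sigma_{\bd{b}}$. In particular the image of $\mathrm{Aut}(Y_4)\to\Sigma_{\bd{b}}$ contains $G$; and once we show it equals $G$, this composite exhibits $\mathrm{Aut}(S_5)\to\mathrm{Aut}(Y_4)$ as a section of $\mathrm{Aut}(Y_4)\to G$.

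\textbf{The image is contained in $G$.} Let $\phi\in\mathrm{Aut}(Y_4)$. Since $\phi$ preserves intersection numbers and, by Proposition \ref{classes}, the $F_{ij}$ are exactly the $(-4)$-curves of $Y_4$, $\phi$ permutes the $F_{ij}$; let $\beta\in\Sigma_{\bd{b}}$ be the resulting permutation. The decisive point is that $\phi$ also preserves the set $\{e_1,\dots,e_{15}\}$ of $m$-exceptional $(-1)$-curves; granting this, let $\rho$ be the induced permutation of the $e_k$. For each $k$, the curve $e_k$ lies over an intersection point of two of the ten $(-1)$-curves of $S_5$, so $F_{ij}\cdot e_k\in\{0,1\}$ equals $1$ exactly when that point lies on $m(F_{ij})$; thus $\bigl(F_{ij}\cdot e_k\bigr)$ is the vertex–edge incidence matrix of the Petersen graph, and $\phi$ preserves it: $F_{ij}\cdot e_k=F_{\beta(ij)}\cdot e_{\rho(k)}$. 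Since two vertices of the Petersen graph are adjacent precisely when some edge is incident to both, $\beta$ preserves adjacency, i.e.\ $\beta\in\mathrm{Aut}(\text{Petersen graph})=G$. Combined with the previous paragraph, $\mathrm{Im}\bigl(\mathrm{Aut}(Y_4)\to\Sigma_{\bd{b}}\bigr)=G$, and the section statement follows.

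\textbf{The main obstacle: $\phi$ preserves the $e_k$.} Everything above is formal once one knows that every $\phi\in\mathrm{Aut}(Y_4)$ maps the fifteen curves $e_k$ onto themselves. Since $\phi$ permutes all $(-1)$-curves of $Y_4$, it suffices to single out the $e_k$ among $(-1)$-curves by an intrinsic property — the most convenient being that the $e_k$ are exactly the $(-1)$-curves meeting precisely two of the ten $(-4)$-curves (each $e_k$ meets the two $F_{ij}$ at the endpoints of its edge, and no other negative curve), or, even more cleanly, that the only negative curves on $Y_4$ at all are the ten $F_{ij}$ and the fifteen $e_k$. Either statement reduces to a bound on negative curves: pulling back to $S_5$, one must show that an irreducible curve on the degree-five del Pezzo surface whose strict transform on $Y_4$ is negative can only be one of the ten lines. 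Here one exploits the very special position of the fifteen centres — they form a "Petersen-matching" configuration, each $(-1)$-curve of $S_5$ carrying exactly its three of them, and each lying on a reducible fibre of every conic-bundle structure of $S_5$ — so that, running through the possible classes $dH-\sum a_iE_i$ subject to the self-intersection and genus constraints, no new negative curve survives. I expect this case analysis to be the only laborious step; equivalently it can be phrased as the assertion that $m\colon Y_4\to S_5$ is the unique birational morphism from $Y_4$ onto a del Pezzo surface, so that its exceptional locus is automatically $\mathrm{Aut}(Y_4)$-invariant.
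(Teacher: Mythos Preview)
Your section construction is fine and matches the paper. The gap is in the containment $\mathrm{Im}\bigl(\mathrm{Aut}(Y_4)\to\Sigma_{\bd b}\bigr)\subseteq G$, because the fifteen $m$-exceptional curves are \emph{not} $\mathrm{Aut}(Y_4)$-invariant, and none of your proposed intrinsic characterisations of them is correct. Concretely, the involution $f_Q\in\mathrm{Aut}(Y_4)$ produced in Proposition~\ref{quadratic} fixes each $F_{ij}$ but moves $F_{(12)(34)}$, $F_{(13)(24)}$, $F_{(14)(23)}$ to three \emph{new} $(-1)$-curves (the strict transforms of the lines $\overline K_{ij}$). Each of these images still meets exactly two $(-4)$-curves, so the class $\bd{f_2}$ of Proposition~\ref{classes} is strictly larger than $\{e_1,\dots,e_{15}\}$; together with the nonempty class $\bd{f_1}$ this also kills the stronger claim that the $F_{ij}$ and $e_k$ are the only negative curves on $Y_4$. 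Finally, $r_Q\colon Y_4\to S_5$ is a second birational contraction onto $S_5$ with a different exceptional locus, so $m$ is not the unique such morphism. In short, $Y_4$ carries infinitely many $(-1)$-curves (indeed $\mathrm{Aut}(Y_4)$ is infinite), and no finite combinatorial invariant of a single $(-1)$-curve will cut out the $m$-exceptional ones.

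The paper bypasses this by passing through $X_4$. Via Theorem~\ref{exactaut} one replaces $\mathrm{Aut}(Y_4)\to\Sigma_{\bd b}$ by $\mathrm{Aut}(X_4)\to\Sigma_{\bd r}$, and then bounds the latter by mapping $\mathrm{Aut}(X_4)$ into the automorphism group of the abstract graph $\Gamma$ on the primitive negative classes of $S_{X_4}$. The crucial input is Vinberg's lattice-theoretic computation $\mathrm{Aut}(\Gamma)\cong\Sigma_5$, which immediately gives $|\mathrm{Im}|\le 120$; combined with your section this forces the image to be $G$. So the upper bound is not obtained by geometry on $Y_4$ at all, but by invoking a structural result about the Néron--Severi lattice of $X_4$.
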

\noindent
Let $Q$ be the quadro-quadric Cremona transformation of $\PP^2$. Clearly $Q$ is a birational involution of $\PP^2$. It turns out (Proposition \ref{quadratic}) that $Q$ induces a regular involution $f_Q$ of $Y_4$. Its lift $\tilde{f}_Q$ to an automorphism of $X_4$ induces a hyperbolic reflection of the hyperbolic space associated with $\mathrm{H}^{1,1}_{\RR}(X_4)$ (Corollary \ref{reflectionX_4}). Our last result identifies the hyperbolic reflection induced by $\tilde{f}_{Q}$ with a reflection described in \cite{VIN}.
\begin{mainthm}[Corollary \ref{main4}]
Reflection induced by $\tilde{f}_Q^*$ on the hyperbolic space associated with $\mathrm{H}^{1,1}_{\RR}(X_4)$ is conjugate to reflections contained in $\mathscr{S}_1$ according to Vinberg's notation \cite[Section 2.2]{VIN}. 
\end{mainthm}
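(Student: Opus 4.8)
The plan is to compute the root of the reflection $\tilde f_Q^*$ explicitly and then match its lattice invariants with Vinberg's description of $\mathscr{S}_1$. By Proposition \ref{quadratic}, composing $\pi$ with $m\colon Y_4\to S_5$ and a birational morphism $S_5\to\PP^2$ gives a degree-two morphism $\psi\colon X_4\to\PP^2$ exhibiting $X_4$ as the minimal resolution of the double plane branched over the sextic $C_6$ that is the union of the six lines joining pairs of the four fixed points of $Q$; the three nodes of $C_6$ coincide with the three base points of $Q$, which is why $Q$ preserves $C_6$ and lifts. Put $h=\psi^*\mathcal{O}_{\PP^2}(1)$ (so $h^2=2$) and let $e_1,e_2,e_3\subset X_4$ be the three disjoint $(-2)$-curves lying over the base points of $Q$; then $h\cdot e_i=0$ and, tracking how $Q$ lifts, $\tilde f_Q^*(h)=2h-e_1-e_2-e_3$ and $\tilde f_Q^*(e_i)=h-e_j-e_k$ for $\{i,j,k\}=\{1,2,3\}$. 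Since $\tilde f_Q^*$ is a reflection (Corollary \ref{reflectionX_4}), these formulas pin down its root: $\tilde f_Q^*=s_r$ with $r=e_1+e_2+e_3-h$, and $r^2=-4$. As $\mathrm{Pic}(X_4)$ is even, $r$ is primitive; as $s_r$ is integral, $r/2\in\mathrm{Pic}(X_4)^\vee$, so $r$ has divisibility $2$, and $r/2$ represents in $\mathrm{Pic}(X_4)^\vee/\mathrm{Pic}(X_4)\cong(\ZZ/2)^2$ the unique element $\xi$ with $q(\xi)\equiv 1\pmod{2\ZZ}$ (the discriminant form being $\langle 3/2\rangle\oplus\langle 3/2\rangle$).

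Next I would identify the invariants $r^2=-4$, $\mathrm{div}(r)=2$, $r/2\equiv\xi$ with those characterizing the roots of the faces of type $\mathscr{S}_1$ in \cite[Section 2.2]{VIN}, and upgrade this coincidence to a conjugacy statement by Eichler's criterion. Indeed $\mathrm{Pic}(X_4)$ is the unique even lattice of signature $(1,19)$ with discriminant form $\langle 3/2\rangle\oplus\langle 3/2\rangle$, hence is isometric to $U\oplus E_8(-1)^{\oplus 2}\oplus\langle -2\rangle^{\oplus 2}$ and contains a hyperbolic plane; therefore $O(\mathrm{Pic}(X_4))$ acts transitively on primitive vectors with prescribed square and prescribed image in the discriminant group. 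Consequently all reflections whose roots share the invariants of $r$ form a single conjugacy class, and matching with the list in \cite[Section 2.2]{VIN} shows this class is $\mathscr{S}_1$; this is the assertion.

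The heart of the matter is twofold. On the geometric side, one needs the fact underlying Corollary \ref{reflectionX_4} that $Q$ fixes each of the six lines and each of the four triple points of $C_6$, so that $\tilde f_Q^*$ acts trivially on all $(-2)$-curves of the four $D_4$-configurations over the triple points and on every component of the ramification divisor; this is exactly what forces the $(-1)$-eigenspace to be one-dimensional and lets one write down $r$ as above. On the lattice side, one must fix the isometry identifying $\mathrm{Pic}(X_4)$ with the model of \cite{VIN} and verify, against Vinberg's fundamental polyhedron, that the faces labelled $\mathscr{S}_1$ are precisely those whose roots have square $-4$ and reduce to $\xi$ modulo $\mathrm{Pic}(X_4)$ — only then does the computation of the invariants of $r$ actually place the reflection in $\mathscr{S}_1$ (and, if \cite{VIN} means conjugacy inside a proper subgroup of $O(\mathrm{Pic}(X_4))$, one must also realise the conjugating isometry there). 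The transitivity input is then routine.
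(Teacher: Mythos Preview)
Your approach is correct in outline but takes a genuinely different route from the paper's, and it is worth contrasting them.

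The paper's proof is essentially a one-line appeal to Vinberg: in \cite[Section~2.2]{VIN} it is shown that there are exactly three $O(S_{X_4})$-conjugacy classes of hyperbolic reflections, labelled $\mathscr{S}_2'$, $\mathscr{S}_2''$, $\mathscr{S}_1$, and that only the class $\mathscr{S}_1$ is realised by automorphisms of $X_4$. Since $\tilde f_Q$ \emph{is} an automorphism and $\tilde f_Q^*$ is a reflection (Corollary~\ref{reflectionX_4}), the conclusion is immediate. No root needs to be computed.

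Your argument instead identifies the root explicitly. Your geometric claim---that the composite $X_4\to Y_4\to S_5\to\PP^2$ realises $X_4$ as the minimal resolution of the double plane branched along the complete quadrilateral $C_6$---is correct (the Stein factorisation has four $D_4$ and three $A_1$ points downstairs, matching your count), but it is not what Proposition~\ref{quadratic} says; that proposition concerns the lift of $Q$, not the model of $X_4$, so the citation is misplaced. Granting the model, your Cremona formulas for $\tilde f_Q^*$ on $h,e_1,e_2,e_3$ follow by pulling back the classical formulas on $\mathrm{Bl}_{q_1,q_2,q_3}\PP^2$ through the equivariant tower, and the root $r=e_1+e_2+e_3-h$ with $r^2=-4$, $\mathrm{div}(r)=2$ and $r/2\equiv\xi$ drops out as you say. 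The Eichler step is fine since $S_{X_4}\cong U\oplus E_8(-1)^{\oplus 2}\oplus\langle-2\rangle^{\oplus 2}$.

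What your route buys is an explicit root and a proof that is in principle independent of Vinberg's classification of which reflections extend to $\mathrm{Aut}(X_4)$; what it costs is the extra geometric and lattice bookkeeping, and---as you yourself flag---you must still open \cite{VIN} at the end to read off which invariants tag $\mathscr{S}_1$, so the saving is partly illusory. The paper's argument sidesteps all of this by using Vinberg's characterisation directly.
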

\noindent
Let $S_{X_4}$ be the lattice of algebraic cycles of $X_4$ and let $O_+(S_{X_4})$ be the group of orthogonal automorphisms of $S_{X_4}$ that preserve the ample cone. According to Vinberg  \cite[Theorem 2.4]{VIN} we have an isomorphism of groups
$$O_+(S_{X_4})\cong \underbrace{\left(\ZZ/2\ZZ\star...\star\ZZ/2\ZZ\right)}_{5\,times}\rtimes\Sigma_5$$
where five copies of $\ZZ/2\ZZ$ in the free product are generated by reflections in $\mathscr{S}_1$ and $\Sigma_5$ transitively permutes the factors of the free product. Thus Theorems \textbf{B} and \textbf{C} identify factors of this semidirect product in terms of autmorphisms of Del Pezzo surface $S_5$ and quadratic transformations of the plane $\PP^2$.
 
\section{Preliminaries in the theory of K3 surfaces and hyperbolic geometry}\label{1}

\textit{In this paper a K3 surface is a smooth, projective surface $X$ over $\CC$ such that $\Omega^2_X\cong \cO_X$ and $\mathrm{H}^1(X,\cO_X)=0$.} Exhaustive presentation of the theory of K3 surfaces is \cite{Huy}. Fix a K3 surface $X$. It follows that $\mathrm{H}^2(X,\ZZ)$ is a free $\ZZ$-module of rank $22$ and cup product yields to intersection pairing on $\mathrm{H}^2(X,\ZZ)$. The transcendental lattice $T_X$ is the sublattice of $\mathrm{H}^2(X,\ZZ)$ orthogonal to the Neron-Severi lattice $S_X=\mathrm{NS}(X)$ with respect to the cup product. The rank of $S_X$ is called the Picard number of $X$ and is denoted by $\rho(X)$. Let $O_+(S_X)$ be the group of isometries of $S_X$ that preserve the ample cone. Suppose that $\omega_X$ is a nontrivial holomorphic two-form on $X$. We define $U_X=\{\alpha\in O(T_X)\mid (1_{\CC}\otimes_{\ZZ}\alpha)(\omega_X)\in \CC\omega_X\}$. For every lattice $L$ we denote by $L^{\vee}$ its dual. The groups
$$S^{\vee}_X/S_X,\,\mathrm{H}^2(X,\ZZ)/(S_X+T_X),\,T_X^{\vee}/T_X$$
are canonically identified. We call $D_X=\mathrm{H}^2(X,\ZZ)/(S_X+T_X)$ the discriminant group of $X$ and we call its order $|D_X|$ the discriminant of $X$. Note that every automorphism of $X$ yields in an obvious way an element of $O_+(S_X)$ as well as an element of $U_X$. Moreover, every orthogonal transformation of $S_X$ or $T_X$ yields an automorphism of $S^{\vee}_X/S_X$ or $T_X^{\vee}/T_X$ respectively i.e. an automorphism of $D_X$. 
\begin{proposition}[{\cite[Section 1.5, Formula (7)]{VIN}}]\label{cartesian}
We have a cartesian square of abstract groups 
\begin{center}
\begin{tikzpicture}
[description/.style={fill=white,inner sep=2pt}]
\matrix (m) [matrix of math nodes, row sep=3em, column sep=2em,text height=1.5ex, text depth=0.25ex] 
{  \mathrm{Aut}(X)  & O_+(S_{X})      \\
U_X & \mathrm{Aut}(D_{X})\\} ;
\path[->,font=\scriptsize]  
(m-1-1) edge node[auto] {$ $} (m-1-2)
(m-2-1) edge node[below] {$ $} (m-2-2)
(m-1-1) edge node[left] {$ $} (m-2-1)
(m-1-2) edge node[right] {$ $} (m-2-2);
\end{tikzpicture}
\end{center}
\end{proposition}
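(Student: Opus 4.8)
The plan is to establish the cartesian square by constructing the four maps and verifying the universal property directly. First I would recall the two classical facts that make this work. One: for a K3 surface $X$, an isometry $\phi$ of the full lattice $\mathrm{H}^2(X,\ZZ)$ that preserves the Hodge decomposition and maps some ample class to an ample class is induced by a unique automorphism of $X$ — this is the strong Torelli theorem for K3 surfaces. Two: the canonical identifications $S_X^{\vee}/S_X \cong \mathrm{H}^2(X,\ZZ)/(S_X+T_X) \cong T_X^{\vee}/T_X$ recalled in the excerpt are compatible with isometries, so an isometry $\alpha$ of $S_X$ and an isometry $\beta$ of $T_X$ glue to an isometry of $\mathrm{H}^2(X,\ZZ)$ if and only if the induced automorphisms of $D_X$ coincide. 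This last statement is the standard gluing criterion for overlattices (Nikulin), and it is the technical heart of the argument.

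Next I would describe the four arrows. The map $\mathrm{Aut}(X)\to O_+(S_X)$ sends $g$ to the restriction of $g^*$ (or $(g^{-1})^*$, with a fixed convention) to $S_X$; it lands in $O_+$ because automorphisms preserve ample classes, as noted in the excerpt. The map $\mathrm{Aut}(X)\to U_X$ sends $g$ to the restriction of $g^*$ to $T_X$; it lands in $U_X$ because $g^*$ preserves the line $\CC\omega_X$, the holomorphic two-form being unique up to scalar. The two maps $O_+(S_X)\to \mathrm{Aut}(D_X)$ and $U_X\to \mathrm{Aut}(D_X)$ are the ones recalled at the end of the excerpt, induced by the action of an isometry of $S_X$ (resp. $T_X$) on the discriminant form; one must check the square commutes, which holds because the discriminant automorphism attached to $g^*|_{S_X}$ and to $g^*|_{T_X}$ are both computed from the single isometry $g^*$ of $\mathrm{H}^2(X,\ZZ)$, and those two agree under the canonical identification precisely because $g^*$ glues them.

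To see the square is cartesian, take an element of the fibre product: a pair $(\alpha,\beta)\in O_+(S_X)\times U_X$ inducing the same automorphism of $D_X$. By the gluing criterion, $\alpha\oplus\beta$ on $S_X\oplus T_X$ extends uniquely to an isometry $\phi$ of the overlattice $\mathrm{H}^2(X,\ZZ)$. This $\phi$ preserves the Hodge structure: it preserves $S_X$ and $T_X$ as sublattices, and on $T_X\otimes\CC$ it preserves $\CC\omega_X$ by $\beta\in U_X$, hence it preserves $\mathrm{H}^{2,0}$ and by conjugation $\mathrm{H}^{0,2}$, and therefore also $\mathrm{H}^{1,1}$; moreover $\phi$ maps an ample class to an ample class since $\alpha\in O_+(S_X)$. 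By the strong Torelli theorem there is a unique $g\in\mathrm{Aut}(X)$ with $g^*=\phi$, and then $g\mapsto(\alpha,\beta)$ under the two structure maps; uniqueness of $g$ gives injectivity of the comparison map $\mathrm{Aut}(X)\to O_+(S_X)\times_{\mathrm{Aut}(D_X)} U_X$, and the construction just given gives surjectivity. The main obstacle, and the step deserving the most care, is the gluing/extension step: one has to invoke Nikulin's description of $\mathrm{H}^2(X,\ZZ)$ as an overlattice of $S_X\oplus T_X$ determined by the graph of an anti-isometry of discriminant forms, and check that "$\alpha$ and $\beta$ induce the same automorphism of $D_X$" is exactly the condition for $\alpha\oplus\beta$ to respect that graph — after which the Hodge-theoretic and ampleness bookkeeping is routine.
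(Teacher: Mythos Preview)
The paper does not give its own proof of this proposition; it is stated with a citation to \cite[Section 1.5, Formula (7)]{VIN} and used as a black box. Your argument is the standard one underlying that citation: Nikulin's overlattice gluing criterion to extend a compatible pair $(\alpha,\beta)\in O_+(S_X)\times U_X$ to an isometry of $\mathrm{H}^2(X,\ZZ)$, followed by the strong Torelli theorem to realise that isometry by a unique automorphism. The outline is correct and complete; the only cosmetic point worth tightening is to state explicitly that injectivity of $\mathrm{Aut}(X)\to O(\mathrm{H}^2(X,\ZZ))$ (equivalently, that an automorphism of a K3 acting trivially on $\mathrm{H}^2$ is the identity) is what makes the comparison map injective, rather than folding it into the uniqueness clause of Torelli.
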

\noindent
Our results significantly use the theory of elliptic fibrations on K3 surfaces. Recall that a proper and flat morphism with connected fibers $p:X\ra C$ defined on a surface $X$ is an elliptic fibration if and only if its general fiber is an elliptic curve. We extensively use Kodaira classification of singular fibers of elliptic fibrations cf. \cite[Chapter V, Section 7]{CCS}. Next results describe elliptic fibrations on K3 surfaces.

\begin{theorem}[{\cite[Section 3, Theorem 1]{SPS}}]
Let $X$ be a projective K3 surface and $\cL$ a line bundle which is nef and $\cL^2=0$. Then $\cL$ is base point free and the corresponding morphism $\phi_{|\cL|}:X\ra \PP\left(\mathrm{H}^0(X,\cL)\right)$ factors as an elliptic fibration $p:X\ra \PP^1$ followed by a finite morphism $\PP^1\ra \PP\left(\mathrm{H}^0(X,\cL)\right)$.
\end{theorem}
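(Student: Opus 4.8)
The plan is to reduce to the case of a primitive divisor class and then read off the elliptic fibration directly from its linear system. Throughout I would tacitly assume $\cL\not\cong\cO_X$ (the statement being vacuous otherwise) and freely use $K_X\cong\cO_X$, so that Serre duality reads $h^2(X,\cL)=h^0(X,-\cL)$ and adjunction for a curve $C\subset X$ reads $2p_a(C)-2=C^2$. First I would check that $\cL$ is effective and locate its primitive part: since $\cL$ is nef and non-zero, $\cL\cdot H>0$ for an ample class $H$ — if $\cL\cdot H=0$ the Hodge index theorem would force $\cL=0$ — so $-\cL$ is not effective and $h^2(X,\cL)=0$, whence Riemann--Roch gives $\chi(\cL)=2+\tfrac12\cL^2=2$ and $h^0(X,\cL)\ge 2$. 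Writing $\cL=k\cE$ with $\cE$ primitive in $\mathrm{NS}(X)$ and $k\ge 1$, the class $\cE$ is again nef, isotropic and non-zero, so the same computation gives $h^0(X,\cE)\ge 2$.

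The second step, which I expect to be the hardest, is to show that $|\cE|$ is base point free. I would decompose $|\cE|=|M|+F$ into moving and fixed parts, with $M\ne 0$ (since $h^0(M)=h^0(\cE)\ge 2$) and $M,F$ sharing no component. From $0=\cE^2=\cE\cdot M+\cE\cdot F$, both summands being non-negative by nefness, I get $\cE\cdot M=\cE\cdot F=0$; combining this with $M\cdot F\ge 0$ and $M^2\ge 0$ (intersecting general members of $|M|$ with each other and with $F$) and with $\cE=M+F$ forces $M^2=F^2=M\cdot F=0$. Now the Hodge index theorem enters decisively: since $\cE$ is a non-zero isotropic class, $\cE^\perp$ is negative semidefinite with radical $\QQ\cE$, so the isotropic vectors $M,F\in\cE^\perp$ lie in $\QQ\cE$; primitivity of $\cE$ together with $0\ne M\le\cE$ then forces $M=\cE$ and $F=0$. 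Hence $|\cE|$ has no fixed component, two general members are disjoint because their intersection number is $\cE^2=0$, and $|\cE|$ is base point free. I expect the genuine work to be concentrated here — everything is driven by applying Hodge index to the isotropic nef class, and there seems to be no shortcut, since $\cE$ is not big and so the standard vanishing theorems do not apply.

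For the third step I would identify the morphism. The image of $\phi_{|\cE|}\colon X\ra\PP(H^0(X,\cE))$ is a curve, as $\cE^2=0$; passing to the Stein factorisation $X\xrightarrow{p}B\xrightarrow{\nu}\PP(H^0(X,\cE))$ with $B$ a smooth projective curve, $p_*\cO_X=\cO_B$ and $\nu$ finite, one has $\cE=p^*D$ for a base point free $D$ on $B$, the projection formula gives $h^0(B,D)=h^0(X,\cE)\ge 2$, and numerically $\cE\equiv(\deg D)\,[p^{-1}(\mathrm{pt})]$. Primitivity of $\cE$ then forces $\deg D=1$, and a smooth curve carrying a base point free degree-one divisor with $h^0\ge 2$ is $\PP^1$; so $B\cong\PP^1$, $h^0(X,\cE)=2$, $\phi_{|\cE|}=p\colon X\ra\PP^1$ and $\cE=p^*\cO_{\PP^1}(1)$. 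A general fibre $C$ of $p$ is irreducible (generic smoothness in characteristic zero) with $C^2=C\cdot K_X=0$, so adjunction gives $p_a(C)=1$ and $p$ is an elliptic fibration. Finally, for $\cL=k\cE$ one has $\cL=p^*\cO_{\PP^1}(k)$, which is base point free, and the projection formula identifies $H^0(X,\cL)$ with $H^0(\PP^1,\cO_{\PP^1}(k))$, so $\phi_{|\cL|}=v_k\circ p$ where $v_k\colon\PP^1\hookrightarrow\PP^k=\PP(H^0(X,\cL))$ is the $k$-uple Veronese embedding, a finite morphism. This is precisely the asserted factorisation, and the only genuinely delicate point along the way is the base-point-freeness established in the second step.
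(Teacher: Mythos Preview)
The paper does not give its own proof of this theorem: it is simply quoted from the literature with the citation \cite[Section 3, Theorem 1]{SPS}, so there is nothing in the paper to compare your argument against.

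That said, your proof is correct and follows the standard route one finds in Saint-Donat or in \cite{Huy}. Two small points are worth making explicit. First, when you claim $M^2\ge 0$ and $M\cdot F\ge 0$, the cleanest justification is that the moving part $M$ is automatically \emph{nef}: for any irreducible curve $C$ there is a member of $|M|$ not containing $C$ (otherwise $C$ would be a fixed component), so $M\cdot C\ge 0$. Both inequalities then follow immediately, and you avoid having to argue that two general members of $|M|$ share no component. Second, in the Hodge-index step you should record that $M,F$ lie in $\ZZ\cE$ (not just $\QQ\cE$) because $\cE$ is primitive and $M,F$ are integral classes; together with $M+F=\cE$ and $M,F$ effective this gives $M=\cE$, $F=0$. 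Everything after that---the Stein factorisation, the degree count forcing $B\cong\PP^1$, and the identification $\phi_{|\cL|}=v_k\circ p$ via the projection formula---is fine as written.
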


\begin{corollary}[{\cite[Lemma 1.1]{SHIN}}]\label{ellipticK3}
Let $D$ be an effective divisor on a K3 surface $X$. Assume that $D$ is not equal to a multiple of any other divisor. Suppose that $D$ as a scheme has an isomorphism type of a singular fiber of some elliptic fibration. Then there exists an elliptic fibration $p:X\ra \PP^1$ such that $D$ is a singular fiber of $p$.
\end{corollary}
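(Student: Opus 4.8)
The plan is to show that $\cL:=\cO_X(D)$ is nef with $\cL^2=0$, apply the theorem of \cite[Section~3, Theorem~1]{SPS} recalled above, and then use the primitivity hypothesis on $D$ to recognise $D$ itself --- not merely its class --- as a singular fibre of the resulting elliptic fibration. The key observation is that on a K3 surface adjunction gives $\cO_X(D)|_D\cong\omega_X(D)|_D\cong\omega_D$, so $\cL|_D$ is the dualising sheaf of the scheme $D$ and hence depends only on the abstract scheme $D$; and a singular Kodaira fibre $F$ satisfies $\omega_F\cong\cO_F$ (realising $F$ as a non-multiple fibre of a relatively minimal elliptic fibration $g\colon Z\to C$ one has $\omega_F\cong\omega_Z(F)|_F$, with both $\cO_Z(F)|_F$ and $\omega_Z|_F$ pulled back from a point of $C$).

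So I would first note that the hypothesis that $D$ is not a multiple of another divisor rules out multiple fibres, so that $D$ is scheme-isomorphic to one of $I_n$, $I_n^{*}$, $II$, $III$, $IV$, $IV^{*}$, $III^{*}$, $II^{*}$; therefore $\omega_D\cong\cO_D$, hence $\cL|_D\cong\cO_D$. Passing to degrees yields $D^2=\deg(\cL|_D)=0$, and restricting further to any irreducible component $C$ of $D$ gives $D\cdot C=\deg(\cL|_C)=0$. As $D$ is effective, every irreducible curve $C'$ satisfies $D\cdot C'\ge0$ (automatic when $C'\not\subseteq D$, and equal to $0$ when $C'$ is a component of $D$), so $\cL$ is nef.

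Then \cite[Section~3, Theorem~1]{SPS} applies to $\cL$: it is base point free and $\phi_{|\cL|}$ factors as an elliptic fibration $p\colon X\to\PP^1$ followed by a finite morphism $\PP^1\to\PP(\mathrm{H}^0(X,\cL))$. Therefore $\cL=p^{*}\cO_{\PP^1}(k)$ for some $k\ge1$, i.e. $[\cL]$ is $k$ times the fibre class of $p$; since $[\cL]=[D]$ is primitive and the fibre class of an elliptic fibration on a K3 surface is itself primitive (there are no multiple fibres), $k=1$. Thus $\cL$ is the class of a fibre of $p$ and $|\cL|$ is exactly the pencil of fibres of $p$, so $D\in|\cL|$ is a fibre of $p$; it is a singular one because $D$ is scheme-isomorphic to a singular Kodaira fibre.

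The step requiring the most care is the input $\omega_D\cong\cO_D$: one must use the hypothesis on $D$ precisely to exclude the multiple-fibre case (whose dualising sheaf is a nontrivial torsion bundle), and then $\omega_F\cong\cO_F$ for the remaining Kodaira types is handled cleanly by the uniform fibration argument above. A minor additional point: if "$D$ is not a multiple of another divisor" is read as saying that the greatest common divisor of the component multiplicities of $D$ is $1$ rather than that $[D]$ is primitive in $\mathrm{NS}(X)$, then primitivity of $[D]$ --- needed to force $k=1$ --- follows from connectedness of $D$ together with $D^2=0$ and the nefness of $\cL$ just established, since a connected effective divisor equal to a proper multiple of a nef isotropic class would be forced to lie inside a single fibre of an elliptic fibration, which is impossible.
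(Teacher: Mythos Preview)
The paper gives no proof of this corollary; it is cited from \cite[Lemma 1.1]{SHIN} and placed immediately after the Piatetski-Shapiro--Shafarevich theorem precisely so that it can be read as a consequence of it, and your derivation follows exactly that intended route. Your argument is correct; the only spot worth tightening is the final paragraph, where instead of arguing abstract primitivity of $[D]$ it is cleaner to note that $D$, being connected with $D\cdot F=0$, is supported in a single fibre $F_0$ of $p$, and then Zariski's lemma on the components of $F_0$ forces $D=kF_0$ as divisors, so the hypothesis that $D$ is not a multiple of another divisor gives $k=1$ directly.
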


\begin{theorem}[\cite{SH},{ \cite[Lemma 1.3]{SHIN}}]\label{Shioda}
Let $p:X\ra \PP^1$ be an elliptic fibration on a K3 surface and $D_i$ for $1\leq i\leq k$ its singular fibers. Let $m_i$ and $m^{(1)}_i$ denote respectively the number of irreducible components of $D_i$ and the number of irreducible components of $D_i$ having multiplicity one. 
\begin{enumerate}[label=\emph{\textbf{(\arabic*)}}, leftmargin=*]
\item Let $r(p)$ be the torsion-free rank of the group of sections of $p$. Then the following formula holds.
$$\rho(X)=2+r(p)+\sum_{i=1}^k(m_i-1)$$
\item  Moreover, if $r(p)=0$ and $n(p)$ denotes the order of the group of sections of $p$, then the following formula holds.
$$|\mathrm{det}(T_X)|=\frac{\prod_{i=1}^km^{(1)}_i}{n(p)^2}$$
\end{enumerate}
\end{theorem}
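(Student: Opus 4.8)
The plan is to derive both formulas from the \emph{Shioda--Tate exact sequence}, which expresses $\mathrm{NS}(X)$ through the combinatorics of the singular fibres together with the Mordell--Weil group of $p$. Throughout I assume, as in the cited works, that $p$ admits a section $O$. Since a K3 surface carries no multiple fibres, $p\colon X\ra\PP^1$ is a relatively minimal elliptic surface with section in Kodaira's sense. Fix a smooth general fibre $F$; for each singular fibre $D_i$ let $\Theta_{i0}$ be the unique component met by $O$ and $\Theta_{i1},\dots,\Theta_{i,m_i-1}$ the remaining components, and let $T\subseteq\mathrm{NS}(X)$ be the \emph{trivial lattice}, generated by $F$, $O$ and all the $\Theta_{ij}$ with $j\geq 1$.

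Step $1$ is to establish, by restricting divisors to the generic fibre $E_\eta$ over $K=\CC(\PP^1)$, the exact sequence
$$0\ra T\ra\mathrm{NS}(X)\ra\mathrm{MW}(p)\ra 0 .$$
Restriction realises $\mathrm{Pic}(E_\eta)$ as $\mathrm{NS}(X)$ modulo the group of vertical classes; since all fibres are linearly equivalent and $\sum_j\mu_{ij}\Theta_{ij}\sim F$ on $X$, that group is exactly $\left<F,\Theta_{ij}:j\geq 1\right>$. As $E_\eta$ carries the rational point $O_\eta$ (the restriction of $O$), its Picard group splits as $\ZZ[O_\eta]\oplus\mathrm{Pic}^0(E_\eta)$, and $\mathrm{Pic}^0(E_\eta)=E_\eta(K)=\mathrm{MW}(p)$; quotienting once more by the class of $O$ yields the sequence. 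I would also record the lattice structure of $T$: the classes $F,O$ span a copy of the hyperbolic plane $U$ (here $F^2=0$, $F\cdot O=1$, $O^2=-2$), the classes $\Theta_{i1},\dots,\Theta_{i,m_i-1}$ span the negative-definite $ADE$ root lattice $L_i$ of rank $m_i-1$ attached to the Kodaira type of $D_i$ (deleting the multiplicity-one node $\Theta_{i0}$ turns the affine Dynkin diagram of $D_i$ into the finite one), and these pieces are mutually orthogonal, so that $T\cong U\oplus\bigoplus_i L_i$ and $\mathrm{rank}\,T=2+\sum_i(m_i-1)$.

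Granting Step $1$, formula \textbf{(1)} follows by taking ranks: $\rho(X)=\mathrm{rank}\,\mathrm{NS}(X)=\mathrm{rank}\,T+\mathrm{rank}\,\mathrm{MW}(p)=2+r(p)+\sum_i(m_i-1)$, since $r(p)$ is the torsion-free rank of $\mathrm{MW}(p)$. For formula \textbf{(2)}, assume $r(p)=0$, so $\mathrm{MW}(p)$ is finite of order $n(p)$ and $[\mathrm{NS}(X):T]=n(p)$; hence $|\mathrm{det}(T)|=[\mathrm{NS}(X):T]^2\,|\mathrm{det}(\mathrm{NS}(X))|=n(p)^2\,|\mathrm{det}(\mathrm{NS}(X))|$. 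From $T\cong U\oplus\bigoplus_i L_i$ and $\mathrm{det}(U)=-1$ we get $|\mathrm{det}(T)|=\prod_i|\mathrm{det}(L_i)|$, and a check against Kodaira's table shows $|\mathrm{det}(L_i)|=m^{(1)}_i$ in every case (for instance $I_n$ gives $A_{n-1}$ with $\mathrm{det}(A_{n-1})=n=m^{(1)}_i$; $I_n^{*}$ gives $D_{n+4}$ with $\mathrm{det}(D_{n+4})=4=m^{(1)}_i$; and $IV^{*},III^{*},II^{*}$ give $E_6,E_7,E_8$ with determinants $3,2,1$, again equal to $m^{(1)}_i$). Finally, as already recalled, $S^{\vee}_X/S_X$ and $T^{\vee}_X/T_X$ are canonically identified, so $|\mathrm{det}(S_X)|=|\mathrm{det}(T_X)|$; combining, $|\mathrm{det}(T_X)|=|\mathrm{det}(\mathrm{NS}(X))|=|\mathrm{det}(T)|/n(p)^2=\prod_i m^{(1)}_i/n(p)^2$.

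The main obstacle is Step $1$: the Shioda--Tate sequence and the orthogonal decomposition of $T$. This is where the geometry of relatively minimal elliptic surfaces enters in earnest --- surjectivity of $\mathrm{NS}(X)\ra\mathrm{Pic}(E_\eta)$ with kernel exactly the vertical classes, the identity $E_\eta(K)=\mathrm{Pic}^0(E_\eta)$, the absence of multiple fibres on a K3, and the identification of the intersection form on the components of each Kodaira fibre with the corresponding affine Dynkin form, so that removing one multiplicity-one component produces the finite $ADE$ lattice. Everything after that is bookkeeping: a rank count for \textbf{(1)}, and a determinant computation together with the standard lattice fact $|\mathrm{det}(S_X)|=|\mathrm{det}(T_X)|$ for \textbf{(2)}.
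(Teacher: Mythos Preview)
The paper does not prove this theorem; it is quoted as a known result with references to \cite{SH} and \cite[Lemma 1.3]{SHIN}, and no argument is given. Your sketch is precisely the standard Shioda--Tate argument from those references: the exact sequence $0\to T\to \mathrm{NS}(X)\to \mathrm{MW}(p)\to 0$, the orthogonal decomposition $T\cong U\oplus\bigoplus_i L_i$, the rank count for \textbf{(1)}, and for \textbf{(2)} the index/determinant relation $|\det T|=n(p)^2|\det \mathrm{NS}(X)|$ combined with the case check $|\det L_i|=m_i^{(1)}$ and the equality $|\det S_X|=|\det T_X|$ coming from unimodularity of $\mathrm{H}^2(X,\ZZ)$. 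All of this is correct, so there is nothing to compare beyond noting that your write-up reproduces the cited proofs rather than anything in the present paper.
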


\begin{proposition}[{\cite[Theorem 2.3]{KJ}}]\label{elliptics}
Every elliptic fibration on a K3 surface $X$ with $\rho(X)=20$ and with discriminant equal to four or three admits a section.
\end{proposition}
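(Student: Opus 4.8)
The plan is to work directly with the finite quadratic form on the discriminant group of $S_X$. An elliptic fibration $p\colon X\ra\PP^1$ determines the class $f\in S_X$ of a fibre; it is primitive and satisfies $f^2=0$. Write $n:=\gcd\{\,f\cdot D:D\in S_X\,\}$ for the index of $p$. It is standard that $p$ admits a section if and only if $n=1$: if some $D\in S_X$ has $f\cdot D=1$, one makes $D$ effective by adding a large multiple of $f$ and then finds a section as its unique horizontal component (which has relative degree one); the converse is clear, and any elliptic fibration on a K3 surface is relatively minimal because a K3 surface carries no $(-1)$-curve. So it suffices to prove $n=1$.

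Suppose $n\geq2$. Since $n$ divides $f\cdot D$ for every $D\in S_X$, the vector $\tfrac1n f\in S_X\otimes\QQ$ lies in $S_X^{\vee}$, and it is not in $S_X$ because $f$ is primitive; hence it represents a class $\bar f$ of order $n$ in the discriminant group $D_X=S_X^{\vee}/S_X$. As $f^2=0$, we get $q_{S_X}(\bar f)=\tfrac1{n^2}f^2=0$ in $\QQ/2\ZZ$, so $\bar f$ is a nonzero isotropic vector of the discriminant quadratic form $q_{S_X}$ on $D_X$. It therefore suffices to show that, under the hypotheses of the proposition, $q_{S_X}$ has no nonzero isotropic vector.

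This is where $\rho(X)=20$ and $|D_X|\in\{3,4\}$ enter. Since $\rho(X)=20$, the transcendental lattice $T_X$ is an even positive-definite lattice of rank $2$ with $|\det T_X|=|\det S_X|=|D_X|$, and for each of the two possible determinants such a lattice is unique up to isometry: $T_X\cong A_2$ if $|D_X|=3$, and $T_X\cong\langle2\rangle\oplus\langle2\rangle$ if $|D_X|=4$ (positive-definite binary quadratic forms of discriminant $-3$, resp.\ $-4$, have class number one). Now $q_{S_X}$ is the negative of $q_{T_X}$, and a direct computation gives: on $D_X\cong\ZZ/3\ZZ$ the form $q_{S_X}$ takes a nonzero value on each of the two nonzero elements, while on $D_X\cong(\ZZ/2\ZZ)^2$ it takes the values $\tfrac12,\tfrac12,1$, up to a global sign, on the three nonzero elements. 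In either case $q_{S_X}$ has no nonzero isotropic vector, contradicting the existence of $\bar f$. Hence $n=1$ and $p$ admits a section.

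I do not expect a real obstacle. The only inputs beyond formal lattice theory are the criterion ``$p$ has a section if and only if $n=1$'' and the classification of the two rank-$2$ lattices above, both routine. An alternative argument would pass to the Jacobian fibration $J\ra\PP^1$, a K3 surface of Picard number $20$ sharing the singular fibres of $p$, and exploit the constraints linking $X$, $J$ and $n$ through Tate--Shafarevich theory; but the discriminant-form computation above is shorter and entirely elementary. If anything, the one substantive point is the numerical accident that the finite quadratic forms attached to discriminants $3$ and $4$ happen to be anisotropic.
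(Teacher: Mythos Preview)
The paper does not prove this proposition at all; it is quoted from an external reference \cite[Theorem 2.3]{KJ} and left without argument. Your proof is correct and self-contained, and in fact reproduces the standard lattice-theoretic argument: an elliptic fibration without section would force a nonzero isotropic vector in the discriminant form $q_{S_X}$, while for $\rho=20$ with discriminant $3$ or $4$ the transcendental lattice is forced to be $A_2$ or $\langle 2\rangle\oplus\langle 2\rangle$, whose discriminant forms are anisotropic.

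Two small remarks on presentation. First, in the step ``add a large multiple of $f$ to make $D$ effective'', you might make explicit that $(D+kf)^2=D^2+2k$ becomes positive and $(D+kf)\cdot f=1>0$ with $f$ nef, so $D+kf$ lies in the positive cone component containing the ample class and Riemann--Roch then gives effectivity; this is routine but worth a sentence. Second, your phrase ``the values $\tfrac12,\tfrac12,1$, up to a global sign'' is slightly imprecise since in $\QQ/2\ZZ$ one has $-\tfrac12\equiv\tfrac32\not\equiv\tfrac12$; it does not matter for the conclusion (none of $\tfrac12,\tfrac32,1$ is $0$), but you could simply list the actual values of $q_{S_X}=-q_{T_X}$ to avoid ambiguity.
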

\noindent
In the last part of section \ref{5} we use certain results concerning explicit models of hyperbolic geometry. For the reference cf. \cite[Section 2.2]{REF} or \cite[Section 1.3]{VIN}. Pick $n\in \NN$ and let $E$ be a real vector space of dimension $n+1$ equipped with bilinear pairing $(-,-)$ of signature $(1,n)$. Then the set $\{x\in E\mid (x,x)>0\}$ has two connected components and let $C_+$ be one of them. Then we define $\mathbb{H}^n=C_+/\RR_{>0}$ i.e. we consider vectors in $C_+$ up to positive multiplicative constant. 

\begin{proposition}[{\cite[Section 2.2]{REF}}]
Bilinear form $(-,-)$ induces a Riemannian metric on $\mathbb{H}^n$. This construction gives rise to a Riemannian manifold with constant negative curvature. 
\end{proposition}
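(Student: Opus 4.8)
The plan is to realise $\mathbb{H}^n$ concretely as the \emph{hyperboloid sheet}
$$\mathcal{H}=\{x\in C_+\mid (x,x)=1\}$$
and to transport all structures along the quotient map $C_+\to\mathbb{H}^n$. First I would check that every ray in $C_+$ meets $\mathcal{H}$ in exactly one point: for $x\in C_+$ one has $(x,x)>0$ and $(tx,tx)=t^2(x,x)$, so $t=(x,x)^{-1/2}$ is the unique positive scalar with $tx\in\mathcal{H}$, and $tx\in C_+$ since $C_+$ is a cone. As $\mathcal{H}$ is the fibre over a regular value of the smooth function $x\mapsto(x,x)$ on the open set $C_+$ --- its differential $2(x,-)$ is nonzero there because the form is nondegenerate and $x\neq 0$ --- it is an embedded smooth hypersurface of $E$, and the restriction of the quotient map is a diffeomorphism $\mathcal{H}\xrightarrow{\,\sim\,}\mathbb{H}^n$. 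It thus suffices to equip $\mathcal{H}$ with a Riemannian metric of constant negative curvature.

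Next I would define the metric and verify that it is Riemannian. Differentiating $(x,x)=1$ gives $T_x\mathcal{H}=\{v\in E\mid (x,v)=0\}=x^{\perp}$. Since $(x,x)>0$, the line $\RR x$ is a nondegenerate positive-definite subspace, so $E=\RR x\oplus x^{\perp}$, and by additivity of signature under orthogonal direct sums the restriction of $(-,-)$ to $x^{\perp}$ has signature $(0,n)$, i.e. it is \emph{negative}-definite. Hence
$$g_x(v,w):=-(v,w),\qquad v,w\in T_x\mathcal{H},$$
is a positive-definite symmetric bilinear form depending smoothly on $x$, being the restriction of a fixed constant tensor on $E$; this is the desired Riemannian metric on $\mathcal{H}$, and hence on $\mathbb{H}^n$. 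No choice of section is involved, because $\mathcal{H}$ was pinned down canonically by the normalisation $(x,x)=1$.

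For the curvature I would realise $\mathcal{H}$ as a nondegenerate hypersurface of the flat pseudo-Euclidean space $(E,(-,-))$ and invoke the Gauss equation. The ambient Levi-Civita connection $\bar\nabla$ is just the directional derivative, so it is flat; at $x\in\mathcal{H}$ the position vector $x$ itself lies in $(T_x\mathcal{H})^{\perp}$ and satisfies $(x,x)=1$, so the field $N(x)=x$ is a unit normal, and $\bar\nabla_vN=v$ for every $v\in T_x\mathcal{H}$ (the derivative of the identity map). Thus the shape operator of $\mathcal{H}$ with respect to $N$ is $\pm\mathrm{id}$ and the second fundamental form is proportional to the induced metric; substituting this into the Gauss equation shows that every sectional curvature of $g$ is the constant $-1$. (Alternatively, and with less sign bookkeeping, one may use the geodesic-polar parametrisation $\Phi(r,\omega)=(\cosh r)\,e_0+(\sinh r)\,\omega$, where $e_0\in C_+$ with $(e_0,e_0)=1$ and $\omega$ runs over the unit sphere of the negative-definite space $e_0^{\perp}$: one checks $\Phi(r,\omega)\in\mathcal{H}$ and $\Phi^{*}g=dr^{2}+\sinh^{2}r\,g_{S^{n-1}}$, the standard metric of constant curvature $-1$.) The only delicate point is the sign bookkeeping in the pseudo-Riemannian Gauss equation: the ambient form restricts to $-g$ on $\mathcal{H}$ and the chosen normal is spacelike, and it is precisely the interaction of these two minus signs that makes the resulting sectional curvature negative rather than positive. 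Together with the elementary signature argument that $x^{\perp}$ is negative-definite, this is essentially the entire content of the proposition.
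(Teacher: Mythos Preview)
The paper does not actually prove this proposition: it is stated with a citation to \cite[Section 2.2]{REF} and no argument is given. Your proof is correct and follows the standard hyperboloid-model approach that one would expect to find in the cited reference; in particular, your identification of $\mathbb{H}^n$ with the sheet $\{x\in C_+\mid (x,x)=1\}$, the signature argument showing that $-(-,-)$ restricts to a Riemannian metric on each $x^{\perp}$, and the curvature computation (either via the Gauss equation or via the explicit polar parametrisation $\Phi(r,\omega)=(\cosh r)\,e_0+(\sinh r)\,\omega$ yielding $dr^2+\sinh^2 r\,g_{S^{n-1}}$) are all valid and constitute the usual textbook proof.
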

\noindent
We call $\mathbb{H}^n$ an $n$-dimensional hyperbolic space.

\begin{corollary}\label{hyperbolic}
Let $X$ be a K3 surface. Pick $E=\mathrm{H}^{1,1}(X)$ and choose $C_+$ to be the connected component of $\{c\in \mathrm{H}^{1,1}(X)\mid (c,c)>0\}$ that contains the ample class of $X$. Then $C_+/\RR_{>0}$ yields a model of a hyperbolic space.
\end{corollary}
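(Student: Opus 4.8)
The plan is to deduce Corollary~\ref{hyperbolic} directly from the preceding Proposition by checking that the Hodge-theoretic data attached to a K3 surface $X$ provide exactly the input required by the abstract construction of $\mathbb{H}^n$. Concretely, I would set $E = \mathrm{H}^{1,1}(X)\cap \mathrm{H}^2(X,\RR)$, the real $(1,1)$-classes, with the pairing $(-,-)$ given by the cup product. The one fact that needs to be invoked is that this pairing has signature $(1,h^{1,1}-1)$; this is the Hodge index theorem for K3 surfaces (the cup product on $\mathrm{H}^2$ has signature $(3,19)$, and restricting to the real $(1,1)$-part removes the two positive directions spanned by $\mathrm{Re}\,\omega_X$ and $\mathrm{Im}\,\omega_X$). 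Thus $(E,(-,-))$ is a real vector space of dimension $n+1$ with $n = h^{1,1}-1$ carrying a symmetric bilinear form of signature $(1,n)$, which is precisely the hypothesis of the general construction recalled above.

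Next I would verify that the ample class genuinely picks out one of the two components $C_+$. Since the pairing has exactly one positive direction, the cone $\{c\in E\mid (c,c)>0\}$ has two connected components, as noted in the excerpt. The ample class $h$ satisfies $(h,h)>0$ because $h$ is the class of an ample divisor and an ample divisor has positive self-intersection on a surface; hence $h$ lies in one of the two components, and I simply name that component $C_+$. With this choice the set $C_+/\RR_{>0}$ is well defined.

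Finally, I would apply the immediately preceding Proposition verbatim. That Proposition asserts that for any real vector space of dimension $n+1$ with a pairing of signature $(1,n)$ and a choice of positive cone component $C_+$, the quotient $C_+/\RR_{>0}=\mathbb{H}^n$ inherits a Riemannian metric of constant negative curvature from the bilinear form. Since we have arranged $E$, $(-,-)$ and $C_+$ to satisfy exactly these hypotheses, the conclusion is that $C_+/\RR_{>0}$ is a model of the hyperbolic space $\mathbb{H}^{h^{1,1}-1}$, which is the assertion of the Corollary.

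I do not expect any real obstacle here: the statement is essentially a specialization of the general hyperbolic-space construction to the K3 setting, and the only substantive ingredient is the signature computation. The one point to state carefully is the appeal to the Hodge index theorem to obtain signature $(1,n)$ on the real $(1,1)$-classes, together with the remark that positivity of the ample class on a surface guarantees $(h,h)>0$ so that the component $C_+$ is canonically determined by the ample cone.
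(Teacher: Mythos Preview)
Your proposal is correct and matches the paper's approach: the paper states this corollary without proof, treating it as an immediate specialization of the preceding proposition, and your argument supplies precisely the two missing observations (signature $(1,h^{1,1}-1)$ on the real $(1,1)$-classes via the Hodge index theorem, and $(h,h)>0$ for the ample class) needed to make that specialization rigorous.
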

\noindent
A reflection in a Riemannian manifold of constant curvature is a nontrivial order two isometry preserving every point inside some totally geodesic hypersurface {\cite[Section 2.2]{REF}}.

\begin{proposition}[{\cite[Section 1.3]{VIN}}]\label{reflection}
For every vector $e\in E$ such that $(e,e)<0$ linear map
$$E\ni x\mapsto x-\frac{2(e,x)}{(e,e)}e\in E$$
induces a reflection of $\mathbb{H}^n$. Moreover, every reflection of $\mathbb{H}^n$ is induced in such a way from a linear map on $E$.
\end{proposition}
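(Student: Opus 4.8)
The plan is to recognise the displayed map as the orthogonal reflection of $(E,(-,-))$ in the hyperplane $e^{\perp}$ and to push its elementary properties down to $\mathbb{H}^n$. Write $r_e\colon E\to E$ for $r_e(x)=x-\tfrac{2(e,x)}{(e,e)}e$. A one-line computation shows $(r_e x,r_e y)=(x,y)$ for all $x,y\in E$, that $r_e^2=\mathrm{id}$, that $r_e(e)=-e$, and that $r_e$ restricts to the identity on $e^{\perp}$; so $r_e$ is a linear involutive isometry. Since $(e,e)<0$, Sylvester's law of inertia forces the restriction of $(-,-)$ to $e^{\perp}$ to have signature $(1,n-1)$, so $e^{\perp}$ contains a vector $v$ with $(v,v)>0$; after possibly replacing $v$ by $-v$ we may assume $v\in C_+$. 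A homeomorphism either preserves or swaps the two connected components of $\{x:(x,x)>0\}$, and as $r_e(v)=v\in C_+$ the map $r_e$ must preserve $C_+$; hence it descends to a self-map $\bar r_e$ of $\mathbb{H}^n=C_+/\RR_{>0}$, which is an isometry because the metric on $\mathbb{H}^n$ is the one induced by $(-,-)$ (previous Proposition), and $\bar r_e^{\,2}=\mathrm{id}$.

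Next I would pin down the fixed locus. For $x\in C_+$ one has $r_e(x)\in\RR_{>0}x$ if and only if $(e,x)=0$: if $r_e(x)=\lambda x$ then $\lambda^2(x,x)=(x,x)$, so $\lambda=\pm1$; $\lambda=-1$ is excluded since $r_e$ preserves $C_+$, and $\lambda=1$ forces $x\in e^{\perp}$. Therefore $\mathrm{Fix}(\bar r_e)$ is exactly the image of $e^{\perp}\cap C_+$ in $\mathbb{H}^n$; this is the intersection of $\mathbb{H}^n$ with the hyperplane $\mathbb{P}(e^{\perp})$, hence a hypersurface, and it is totally geodesic (linear sections of this projective model are totally geodesic; in any case the fixed-point set of an isometry of a Riemannian manifold is totally geodesic). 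Since $C_+$ is open it is not contained in the proper subspace $e^{\perp}$, so $\bar r_e\neq\mathrm{id}$. By the definition of reflection recalled before Proposition \ref{reflection}, $\bar r_e$ is a reflection of $\mathbb{H}^n$.

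For the converse, let $r$ be a reflection of $\mathbb{H}^n$, so $r$ fixes pointwise a totally geodesic hypersurface $H$. Using the classification of totally geodesic hypersurfaces of the projective model, $H$ is the image of $V\cap C_+$ for a linear hyperplane $V\subseteq E$ meeting $C_+$, and such $V$ equals $e^{\perp}$ for some $e$ with $(e,e)<0$ (the orthogonal complement of a line meeting $C_+$ has signature $(1,n-1)$, equivalently the line is negative definite). Fixing such an $e$, we get $H=\mathrm{Fix}(\bar r_e)$, so $g:=\bar r_e\circ r$ is an isometry of $\mathbb{H}^n$ fixing $H$ pointwise. At any $p\in H$ the differential $dg_p$ is the identity on $T_pH$ and $\pm1$ on the normal line. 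If it is $+1$ then $dg_p=\mathrm{id}$, whence $g=\mathrm{id}$ (an isometry of a connected Riemannian manifold that fixes a point and has identity differential there is the identity), so $r=\bar r_e^{-1}=\bar r_e$. If it is $-1$ then $dg_p=d(\bar r_e)_p$, whence $g=\bar r_e$, so $r=\mathrm{id}$, contradicting $r\neq\mathrm{id}$. Either way $r=\bar r_e$.

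The one point that really needs care is the signature bookkeeping making $e^{\perp}$ meet $C_+$: without $(e,e)<0$ the reflection $r_e$ sends $e$ to $-e$ and hence swaps the two components of the positive cone, so it does not descend to $\mathbb{H}^n$ at all. The only inputs beyond linear algebra are the identification of totally geodesic hypersurfaces of the projective model with linear hyperplane sections and the local rigidity of isometries of spaces of constant curvature; both are standard and contained in \cite[Section 2.2]{REF} and \cite[Section 1.3]{VIN}.
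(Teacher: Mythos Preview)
The paper does not prove this proposition at all: it is stated with a citation to \cite[Section 1.3]{VIN} and no argument is given. Your proof is a correct, self-contained verification of the cited fact. The first half (that $r_e$ is an involutive isometry preserving $C_+$ and fixing the totally geodesic hypersurface coming from $e^{\perp}$) is clean; the converse is also fine, resting on the two standard facts you name (that totally geodesic hypersurfaces in the projective model are linear hyperplane sections, and rigidity of isometries fixing a point with identity differential). One cosmetic point: your parenthetical ``the orthogonal complement of a line meeting $C_+$ has signature $(1,n-1)$'' is phrased backwards---what you need is that since the hyperplane $V$ meets $C_+$ it has signature $(1,n-1)$, hence its orthogonal line $\RR e$ is negative definite---but the intended logic is clear and correct.
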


\section{Cyclic coverings and $n$-th root of a section}\label{2}

In this section we present material leading to important result concerning lifting of automorphisms. The first result of this section is a part of the folklore and can be extracted from presentation of cyclic coverings in \cite[Section 4.1B]{LAZ}. 

\begin{proposition}\label{universal}
 Let $Y$ be a scheme, $\cE$ be a locally free sheaf on $Y$ and  $s\in \Gamma(Y,\Sym_n(\cE))$ be a global section for some $n\in \NN$. Then there exists a scheme $q:\mathbb{W}_n(\cE,s)\ra Y$ over $Y$ and a section $t_{\cE}\in \Gamma\left(\mathbb{W}_n(\cE,s),q^*\cE\right)$ such that 
\begin{enumerate}[label=\emph{\textbf{(\arabic*)}}, leftmargin=*]
\item $t_{\cE}^n=q^*s$
\item For every morphism $g:X\ra Y$ and a section $t\in \Gamma(X,g^*\cE)$ such that $t^n=g^*s$ there exists a unique morphism $h:X\ra \mathbb{W}_n(\cE,s)$ in the category of schemes over $Y$ such that $t=h^*t_{\cE}$.
\end{enumerate}
\end{proposition}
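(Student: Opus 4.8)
This is a statement about the universal property of an "$n$-th root" scheme, exactly analogous to the construction of cyclic covers in Lazarsfeld. The key point is that a section $t \in \Gamma(X, g^*\cE)$ with $t^n = g^*s$ is the same data as a ring homomorphism out of a certain symmetric-algebra quotient, so the desired scheme should be built as a relative Spec.

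Here's the plan. First I would recall that for a locally free sheaf $\cE$ on $Y$, giving a morphism $g\colon X \to Y$ together with a section $t \in \Gamma(X, g^*\cE)$ is the same as giving a morphism $X \to \VV(\cE^{\vee})$ over $Y$, where $\VV(\cE^{\vee}) = \Spec_Y(\Sym \cE^{\vee})$ is the total space of the vector bundle associated to $\cE$ (here I am using that $\Hom_Y(X \to Y, \VV(\cE^{\vee})) = \Gamma(X, g^*\cE)$ by adjunction between $\Spec_Y\Sym$ and taking global sections of a pullback). Inside $\Sym \cE^{\vee}$ there is the ideal sheaf $\cJ$ generated locally by the expressions cutting out the equation $t^n = q^*s$; more precisely, the section $s \in \Gamma(Y,\Sym_n\cE)$ is a map $\cO_Y \to \Sym_n\cE$, equivalently (by dualizing and using local freeness) a map $\Sym_n(\cE^{\vee}) \to \cO_Y$, but it is cleaner to phrase it as follows. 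The section $s$ gives, for each open $U$ trivializing $\cE$, a polynomial expression; the universal $t_{\cE}$ should satisfy $t_{\cE}^{\otimes n} = q^*s$ inside $q^*\Sym_n\cE$. So I would define $\mathbb{W}_n(\cE, s)$ to be the closed subscheme of $\VV(\cE^{\vee})$ cut out by the $\cO_Y$-algebra ideal generated by the image of $\id - (\text{multiplication by }s)$ in the appropriate graded piece — concretely, take $\cA = \Sym(\cE^{\vee}) \big/ \cJ$ where $\cJ$ is the ideal generated by the subsheaf of $\Sym_n(\cE^{\vee})$ which is the "graph of $s$" (using $s^{\vee}\colon \Sym_n(\cE^{\vee}) \to \cO_Y \hookrightarrow \Sym_0(\cE^{\vee}) \subseteq \Sym(\cE^{\vee})$, the ideal generated by $\{x - s^{\vee}(x) : x \in \Sym_n(\cE^{\vee})\}$ — wait, this does not quite capture $t^n = s$ since $t^n$ sits in degree $n$ and $s$ in degree $0$). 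Let me instead phrase it via the tautological section: $\VV(\cE^{\vee}) \to Y$ carries a tautological $t_{\mathrm{taut}} \in \Gamma(\VV(\cE^{\vee}), q^*\cE)$, hence $t_{\mathrm{taut}}^n \in \Gamma(\VV(\cE^{\vee}), q^*\Sym_n\cE)$, and $q^*s$ is another such section; set $\mathbb{W}_n(\cE,s)$ to be the closed subscheme where these two sections of the locally free sheaf $q^*\Sym_n\cE$ agree (a locally closed, in fact closed, condition since it is the vanishing of their difference). Take $t_{\cE}$ to be the restriction of $t_{\mathrm{taut}}$; then (1) holds by construction.

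For (2), given $g\colon X \to Y$ and $t$ with $t^n = g^*s$: the pair $(g,t)$ corresponds to a unique morphism $h_0\colon X \to \VV(\cE^{\vee})$ over $Y$ with $h_0^* t_{\mathrm{taut}} = t$. Then $h_0^*(t_{\mathrm{taut}}^n) = t^n = g^*s = h_0^*(q^*s)$, so $h_0$ factors through the subscheme where $t_{\mathrm{taut}}^n = q^*s$, i.e. through $\mathbb{W}_n(\cE,s)$, giving $h\colon X \to \mathbb{W}_n(\cE,s)$; uniqueness of $h$ follows from uniqueness of $h_0$ together with injectivity of $\mathbb{W}_n(\cE,s) \hookrightarrow \VV(\cE^{\vee})$. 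This is just a diagram chase once the defining subscheme is set up correctly.

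I expect the main obstacle to be purely bookkeeping: writing down the ideal sheaf $\cJ$ cutting out $\{t_{\mathrm{taut}}^n = q^*s\}$ in an invariant (chart-free) way, and checking it is independent of trivializations of $\cE$ — but since it is literally the vanishing locus of the global section $t_{\mathrm{taut}}^n - q^*s$ of the locally free sheaf $q^*\Sym_n\cE$ on $\VV(\cE^{\vee})$, this is automatic and no gluing argument is really needed. Alternatively, one may give the fully explicit local model: on a trivializing open $U$ with $\cE|_U \cong \cO_U^r$ and coordinates $y_1, \dots, y_r$ on the fibers, $s|_U$ is a degree-$n$ homogeneous polynomial... no: $s$ is a section of $\Sym_n\cE$, so over $U$ it is an $r$-tuple-indexed collection, and the relation reads as finitely many polynomial equations in $y_1,\dots,y_r$ with coefficients in $\cO_U$; then patch. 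I would present the clean version first and relegate the explicit local check to a remark, since the cited reference \cite[Section 4.1B]{LAZ} already carries out the $n$-th-root-of-a-section construction in the case $\cE$ a line bundle, and the general locally free case is formally identical.
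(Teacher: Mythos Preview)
Your construction is correct: realize $\mathbb{W}_n(\cE,s)$ as the closed subscheme of the total space $\VV(\cE^{\vee})=\Spec_Y(\Sym\cE^{\vee})$ cut out by the vanishing of the section $t_{\mathrm{taut}}^{\,n}-q^*s$ of $q^*\Sym_n\cE$, and read off the universal property from that of $\VV(\cE^{\vee})$. The digression about writing down the ideal $\cJ$ directly inside $\Sym\cE^{\vee}$ is unnecessary and, as you noticed yourself, gets the gradings tangled; the clean formulation via the tautological section already handles gluing and well-definedness for free, so you can simply drop the earlier attempt rather than relegate it to a remark.

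As for comparison: the paper does not supply its own proof of this proposition. It is stated as folklore and referred to \cite[Section~4.1B]{LAZ}, where the line-bundle case is treated; your argument is exactly the natural extension of that construction to arbitrary locally free $\cE$, so there is nothing to contrast.
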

\noindent
If $\cL$ is a line bundle and $D$ is a divisor of zeros of some section $s \in \Gamma(Y,\cL^{\otimes n})$, then we call $q:\mathbb{W}_n(\cL,s)\ra Y$ a cyclic covering of $Y$ corresponding to $\cL$ branched along $D$. Note that if $Y$ is a complete variety over $\CC$, then the notion of cyclic covering does not depend on the choice of a global section $s\in \Gamma(Y,\cL^{\otimes n})$ having $D$ as the divisor of zeros. We use the notion of cyclic covering in the following special case.

\begin{definition}
Let $Y$ be a smooth and proper variety over $\CC$. We denote by $\omega_Y$ the sheaf of algebraic differential forms of the highest possible degree on $Y$ i.e $\omega_Y=\cO_Y(K_Y)$, where $K_Y$ is the canonical divisor on $Y$. Fix integer $n\in \NN$. Let $D$ be an effective divisor linearly equivalent to $-nK_Y$. Then the cyclic covering of $Y$ branched along $D$ and corresponding to $\omega_Y^{\vee}$ is called the anticanonical cyclic covering of $Y$.
\end{definition}
\noindent
The following proposition is used in the section \ref{5}.
\begin{proposition}\label{lifting}
Let $Y$ be a smooth, proper variety over $\CC$. Let $D$ be an effective divisor such that $D\sim -nK_Y$ for some $n\in \NN$. Denote by $q:X \ra Y$ the anticanonical cyclic covering branched along $D$. Suppose that $f$ is an automorphism of $Y$ such that $f^*D=D$. Then there exists an automorphism $\tilde{f}$ of $X$ such that the following square is commutative.
\begin{center}
\begin{tikzpicture}
[description/.style={fill=white,inner sep=2pt}]
\matrix (m) [matrix of math nodes, row sep=3em, column sep=2em,text height=1.5ex, text depth=0.25ex] 
{  X     &  &    X                     \\
 Y   &   &   Y                           \\} ;
\path[->,font=\scriptsize]  
(m-1-1) edge node[above] {$\tilde{f} $} (m-1-3)
(m-1-1) edge node[left] {$ q$} (m-2-1)
(m-1-3) edge node[right] {$ q$} (m-2-3)
(m-2-1) edge node[below] {$ f$} (m-2-3);
\end{tikzpicture}
\end{center} 
\end{proposition}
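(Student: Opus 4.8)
The plan is to deduce everything from the universal property of the cyclic covering recorded in Proposition \ref{universal}. Write $\cL=\omega_Y^{\vee}$, so that $\cL^{\otimes n}\cong\cO_Y(-nK_Y)$, and fix a section $s\in\Gamma(Y,\cL^{\otimes n})$ whose divisor of zeros is $D$; such an $s$ exists precisely because $D\sim -nK_Y$. By construction $X=\mathbb{W}_n(\cL,s)$, with structure morphism $q\colon X\to Y$ and tautological section $t_{\cL}\in\Gamma(X,q^{*}\cL)$ satisfying $t_{\cL}^{\,n}=q^{*}s$. To produce $\tilde f$ it suffices, by part $(2)$ of Proposition \ref{universal} applied to the morphism $g:=f\circ q\colon X\to Y$, to exhibit a section $t\in\Gamma(X,g^{*}\cL)$ with $t^{\,n}=g^{*}s$: the resulting $h\colon X\to\mathbb{W}_n(\cL,s)=X$ is by definition a morphism over $Y$, i.e. $q\circ h=f\circ q$, which is exactly the commutativity asserted in the square.

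First I would construct this section. Since $f$ is an automorphism, pullback of top-degree differential forms gives a canonical isomorphism $\omega_Y\xrightarrow{\ \sim\ }f^{*}\omega_Y$, hence canonical isomorphisms $g^{*}\cL=q^{*}f^{*}\cL\cong q^{*}\cL$ and $f^{*}(\cL^{\otimes n})\cong\cL^{\otimes n}$. Under the latter, $f^{*}s$ becomes a section of $\cL^{\otimes n}$ whose divisor of zeros is $f^{*}D$, which equals $D$ by hypothesis. Because $Y$ is a proper, connected, reduced $\CC$-scheme we have $\Gamma(Y,\cO_Y^{\times})=\CC^{\times}$, so $f^{*}s=\lambda s$ for a unique $\lambda\in\CC^{\times}$. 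Choosing $\mu\in\CC^{\times}$ with $\mu^{n}=\lambda$ (possible since $\CC^{\times}$ is divisible), the section $t:=\mu\,t_{\cL}\in\Gamma(X,q^{*}\cL)\cong\Gamma(X,g^{*}\cL)$ satisfies $t^{\,n}=\mu^{n}t_{\cL}^{\,n}=\lambda\,q^{*}s=q^{*}(f^{*}s)=g^{*}s$, as required. This defines $\tilde f:=h$.

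It remains to check that $\tilde f$ is an automorphism. Note that $f^{*}D=D$ forces $(f^{-1})^{*}D=D$, so the same construction applied to $f^{-1}$ (using the $n$-th root $\mu^{-1}$ of $\lambda^{-1}$) yields a morphism $\widetilde{f^{-1}}\colon X\to X$ with $q\circ\widetilde{f^{-1}}=f^{-1}\circ q$. Both composites $\widetilde{f^{-1}}\circ\tilde f$ and $\tilde f\circ\widetilde{f^{-1}}$ are then morphisms $X\to X$ lying over $\mathrm{id}_Y$, and, tracking how $t_{\cL}$ is pulled back through the two factors (using $\tilde f^{*}t_{\cL}=\mu\,t_{\cL}$ and the analogous identity for $\widetilde{f^{-1}}$), one finds that each composite pulls $t_{\cL}$ back to $t_{\cL}$. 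By the uniqueness clause of Proposition \ref{universal}$(2)$, applied with $g=q$ and $t=t_{\cL}$, such a morphism must be $\mathrm{id}_X$; hence $\tilde f$ is invertible.

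The only nonroutine step — and what I expect to be the main obstacle — is the bookkeeping of the canonical identifications $f^{*}\cL\cong\cL$ and $f^{*}(\cL^{\otimes n})\cong\cL^{\otimes n}$ together with their cocycle compatibility (namely $(f^{-1})^{*}f^{*}\cong\mathrm{id}$ and $\phi_{\mathrm{id}}=\mathrm{id}$), since this is exactly what makes the scalar $\lambda$ well defined and makes a consistent choice of $n$-th roots for $f$ and $f^{-1}$ produce composite lifts that pull $t_{\cL}$ back to itself. Everything else is a direct application of Proposition \ref{universal}. (The $n$ possible choices of $\mu$ differ by the deck transformations of $q$ and account for the $n$ distinct lifts of $f$.)
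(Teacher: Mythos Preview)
Your proof is correct and follows essentially the same approach as the paper: both exploit the universal property of Proposition~\ref{universal}, the canonical isomorphism $f^{*}\omega_Y^{\vee}\cong\omega_Y^{\vee}$ coming from pullback of forms, and the fact that $f^{*}s$ and $s$ differ only by a scalar in $\CC^{\times}$. The paper packages the construction as a composition of three explicit isomorphisms $f'\cdot f''\cdot f'''$ (base change of $\mathbb{W}_n$ along $f$, then two isomorphisms furnished by the universal property), which makes invertibility automatic and spares your separate verification via the uniqueness clause.
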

\begin{proof}
Let $s$ be a global section of $(\omega^{\vee}_Y)^{\otimes n}\cong \cO_Y(-nK_Y)$ having $D$ as its divisor of zeros. Then $X=\mathbb{W}_m(\omega^{\vee}_Y,s)$. It follows from the universal property described in Proposition \ref{universal} that we have a base change diagram
\begin{center}
\begin{tikzpicture}
[description/.style={fill=white,inner sep=2pt}]
\matrix (m) [matrix of math nodes, row sep=3em, column sep=2em,text height=1.5ex, text depth=0.25ex] 
{  
\mathbb{W}_n(f^*\omega^{\vee}_S,f^*s)        &  &     \mathbb{W}_n(\omega^{\vee}_S,s)                      \\
Y   &   &   Y                           \\} ;
\path[->,font=\scriptsize]  
(m-1-1) edge node[above] {$f' $} (m-1-3)
(m-1-1) edge node[left] {$ $} (m-2-1)
(m-1-3) edge node[right] {$ $} (m-2-3)
(m-2-1) edge node[below] {$ f$} (m-2-3);
\end{tikzpicture}
\end{center} 
Next observe that the cotangent morphism $f^*\Omega^1_Y\ra \Omega^1_Y$ induces an isomorphism $f^*\omega_Y\ra \omega_Y$. Dualizing we derive that there exists an isomorphism $\phi:\omega^{\vee}_Y\ra f^*\omega^{\vee}_Y$.
Since $\phi^{\otimes n}$ is an isomorphism, we derive that section $\phi^{\otimes n}(s)$ of $f^*(\omega_Y^{\vee})^{\otimes n}$ also has $D$ as the divisor of zeros. According to the fact that $f^*D=D$ we derive that $f^*s$ has $D$ as the divisor of zeros. Hence there exists $\alpha\in \CC^*$ such that $f^*s=\alpha \phi^{\otimes n}(s)=\phi^{\otimes n}(\alpha s)$.
Therefore, again by universal property of Proposition $\ref{universal}$ map $\phi$ induces an isomorphism $f'':\mathbb{W}_m(\omega^{\vee}_Y,\alpha s)\ra \mathbb{W}_m(f^*\omega^{\vee}_Y,f^*s)$ of schemes over $Y$. Finally, since $\alpha s$ and $s$ have the same divisor of zeros, there exists an isomorphism $f''':\mathbb{W}_m(\omega_Y^{\vee},s)\ra \mathbb{W}_m(\omega^{\vee}_Y,\alpha s)$ of schemes over $Y$. Now the composition $\widetilde{f}=f'\cdot f''\cdot f'''$ is a lift of $f$.
\end{proof}

\section{Construction of $X_4$}\label{3}

In this section we construct $X_4$ explicitly as a double covering of some rational surface.\\ Consider four points  $P=\{p_1,p_2,p_3,p_4\}$ of $\PP^2$ such that no three of them are on the same line. Blow them up to get a Del Pezzo surface $S_5=\mathrm{Bl}_{P}(\PP^2)=\mathrm{Bl}_{p_1,p_2,p_3,p_4}(\PP^2)$ of degree $5$. For $1\leq i<j\leq 4$ denote by $E_{ij}$ the strict transform on $\mathrm{Bl}_{P}(\PP^2)$ of a line on $\PP^2$ passing through points $P\setminus \{p_i,p_j\}=\{p_1,p_2,p_3,p_4\}\setminus \{p_i,p_j\}$. For $1\leq i\leq 4$ denote by $E_{0i}$ the exceptional divisor of $\mathrm{Bl}_{P}(\PP^2)$ over $p_i$. 

\begin{proposition}[{\cite[Section 8.5.1]{DIC}}]\label{delpezzo}
The following assertions hold.
\begin{enumerate}[label=\emph{\textbf{(\arabic*)}}, leftmargin=*]
\item  Curves $E_{ij}$ for $0\leq i<j\leq 4$ are all irreducible $(-1)$-curves on $S_5$.
\item  The divisor $E=\sum_{0\leq i<j\leq 4}E_{ij}$ is linearly equivalent to $-2K_{S_5}$.
\item  The incidence graph of curves $\{E_{ij}\}_{0\leq i<j\leq 4}$ is the Petersen graph in the \ref{fig1}.
\begin{equation}\label{fig1}
\begin{tikzpicture}[scale=0.25, every node/.style={scale=0.8}]
\draw (-4.7,2.4) node[scale=0.8]  {$ (02) $};
\draw (4.7,2.4) node[scale=0.8]  {$ (13) $};
\draw (0,9.8) node[scale=0.8]  {$ (01)$};
\draw (-9.9,3) node[scale=0.8]  {$ (34) $};
\draw (9.9,3) node[scale=0.8]  {$ (24) $};
\draw (-1.5,4.9)node[scale=0.8]  {$ (23) $};
\draw (4.3,-4)node[scale=0.8]  {$ (14) $};
\draw (-4.3,-4)node[scale=0.8]  {$ (04) $};
\draw (-6.5,-7.8)node[scale=0.8]  {$ (12) $};
\draw (6.5,-7.8)node[scale=0.8]  {$ (03) $};

\fill [black] (90 :5) circle (0.35cm);
\fill [black] (306 :5) circle (0.35cm);
\fill [black] (18 :5) circle (0.35cm);
\fill [black] (162 :9) circle (0.35cm);
\fill [black] (234 :9) circle (0.35cm);
\fill [black] (18 :9) circle (0.35cm);

\draw [black,very thick] (90:9) to (162:9);
\draw [black, very thick] (162:9) to (234:9);
\draw [black, line width=1mm] (234:9) to (306:9);
\draw [black, very thick] (306:9) to (18:9);
\draw [black, very thick] (18:9) to (90:9);

\draw [black, line width=1mm] (90:9) to (90:5);
\draw [black,very thick] (162:9) to (162:5);
\draw [black, very thick] (234:9) to (234:5);
\draw [black, very thick] (306:9) to (306:5);
\draw [black, very thick] (18:9) to (18:5);

\draw [black,very thick] (90:5) to (234:5);
\draw [black,  very thick] (90:5) to (306:5);
\draw [black,  line width=1mm] (162:5) to (18:5);
\draw [black, very thick] (18:5) to (234:5);
\draw [black, very thick] (306:5) to (162:5);

\fill [black] (162 :5) circle (0.35cm);
\fill [black] (234 :5) circle (0.35cm);
\fill [black] (90 :9) circle (0.35cm);
\fill [black] (306 :9) circle (0.35cm);
\end{tikzpicture}\tag{Figure 1}
\end{equation}
Three thick edges in the \ref{fig1} describe three linearly equivalent divisors whose complete linear system defines a fibration $S_5\ra \PP^1$ with general fiber being smooth and rational curve. It is usually called a conic fibration due to the fact that its fibers are of degree two with respect to $-K_{S_5}$.
\end{enumerate}
\end{proposition}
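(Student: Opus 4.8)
The plan is to reduce every assertion to an explicit computation in $\mathrm{Pic}(S_5)=\ZZ H\oplus\ZZ E_1\oplus\cdots\oplus\ZZ E_4$, where $H$ is the pullback of a line, $E_i=E_{0i}$ is the exceptional divisor over $p_i$, the intersection form is $H^2=1$, $E_i^2=-1$, $H\cdot E_i=0$ and $E_i\cdot E_j=0$ for $i\neq j$, and $K_{S_5}=-3H+E_1+E_2+E_3+E_4$. The first step is to record the classes of the ten curves: $E_{0i}=E_i$, and for $1\leq i<j\leq 4$, since the line through the two points of $P\setminus\{p_i,p_j\}$ is irreducible and, by the no-three-collinear hypothesis, meets $P$ in exactly those two points, its strict transform has class $E_{ij}=H-E_k-E_l$ with $\{i,j,k,l\}=\{1,2,3,4\}$. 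Once these ten classes are in hand the proposition becomes essentially a bookkeeping exercise.

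For (1), the computations $E_{ij}^2=-1$ and $E_{ij}\cdot K_{S_5}=-1$ are immediate in each case, so adjunction identifies every $E_{ij}$ as a smooth rational $(-1)$-curve, and irreducibility was already noted. To see these are \emph{all} the $(-1)$-curves, I would write an arbitrary irreducible $(-1)$-curve as $C=dH-\sum m_iE_i$; then $d=C\cdot H\geq 0$, and $C^2=-1$, $C\cdot K_{S_5}=-1$ become $\sum m_i^2=d^2+1$ and $\sum m_i=3d-1$. Feeding these into the Cauchy--Schwarz inequality gives $(3d-1)^2\leq 4(d^2+1)$, hence $d\in\{0,1\}$, and a finite case check shows $(m_1,\dots,m_4)$ is a permutation of $(-1,0,0,0)$ or of $(1,1,0,0)$; i.e. $C$ is one of the ten classes. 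Uniqueness of the irreducible representative of each class follows since two distinct irreducible $(-1)$-curves in a common class would have intersection number $-1<0$.

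Part (2) is a direct sum: over the six pairs $1\leq i<j\leq 4$ each index $m\in\{1,2,3,4\}$ lies in the complementary pair $\{k,l\}$ for exactly $\binom{3}{2}=3$ of them, so $\sum_{1\leq i<j\leq 4}E_{ij}=6H-3\sum_iE_i$; adding $\sum_{i=1}^4 E_{0i}=\sum_iE_i$ gives $E=6H-2\sum_iE_i=-2K_{S_5}$. For the incidence graph in (3), I would compute that for two distinct curves among the ten the intersection number is $1$ when their (size-two) index sets are disjoint and $0$ otherwise, and that a present edge corresponds to a single transverse point; this uses $E_i\cdot(H-E_k-E_l)=1$ iff $i\in\{k,l\}$ and $(H-E_k-E_l)\cdot(H-E_{k'}-E_{l'})=1-|\{k,l\}\cap\{k',l'\}|$, together with the fact that complementary pairs are disjoint precisely when the original pairs are. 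Hence the incidence graph is the graph on the ten $2$-element subsets of $\{0,1,2,3,4\}$ with an edge between disjoint subsets, that is the Kneser graph $K(5,2)$, which is the Petersen graph; matching this against the labelled vertices in the figure is then inspection.

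For the conic-fibration statement in (3): for two complementary index pairs (omitting a single element $m$) one gets $(E_{ij}+E_{kl})^2=-1+2-1=0$ and $(E_{ij}+E_{kl})\cdot K_{S_5}=-2$, and in fact $E_{ij}+E_{kl}\sim H-E_m$ if $m\neq 0$ and $\sim 2H-\sum_iE_i$ if $m=0$; there are exactly three complementary pairs with the given omitted element, and these are the three ``thick'' edges, linearly equivalent by the previous sentence. Since $h^0(S_5,H-E_m)=2$ and the pencil $|H-E_m|$ of strict transforms of lines through $p_m$ is base point free --- distinct lines through $p_m$ have disjoint strict transforms --- the induced morphism $S_5\to\PP^1$ is a fibration whose general fibre is a smooth rational curve of $-K_{S_5}$-degree $2$, and the case $m=0$ is the pencil of conics through $p_1,\dots,p_4$. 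The only step beyond routine is the converse half of (1): one must extract the bound $d\leq 1$ from $C^2=-1$ and $C\cdot K_{S_5}=-1$ by Cauchy--Schwarz before the case analysis closes it; everything else is an unconditional computation once the ten curves are written in the basis $H,E_1,\dots,E_4$, so I expect no genuine obstacle.
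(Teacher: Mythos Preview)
Your argument is correct in every part: the classes of the ten curves, the adjunction check, the Cauchy--Schwarz bound $(3d-1)^2\leq 4(d^2+1)$ forcing $d\in\{0,1\}$, the sum in \textbf{(2)}, the identification of the incidence graph with the Kneser graph $K(5,2)$, and the pencil $|H-E_m|$ (resp.\ $|2H-\sum E_i|$) giving the conic fibration. The paper itself gives no proof of this proposition---it is stated with a citation to \cite[Section 8.5.1]{DIC} and used as input---so there is no in-paper argument to compare against; your write-up is essentially the standard computation one would find in that reference.
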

\vspace{0.5cm}
\noindent
Let $m:Y_4\ra S_5$ be the blowing up of intersection points of curves $\{E_{ij}\}_{0\leq i<j\leq 4}$. Let $F_{ij}$ be a strict transform of $E_{ij}$ in $Y_4$ for every $0\leq i<j\leq 4$. For any $\{i,j\}$, $\{k,l\}\subseteq \{0,1,2,3,4\}$ such that $\{i,j\}\cap \{k,l\}=\emptyset$ we denote by $F_{(ij)(kl)}$ a curve on $Y_4$ which is the preimage of the intersection point $E_{ij}\cap E_{kl}$. Next result is a consequence of the Proposition \ref{delpezzo}.

\begin{proposition}\label{petersen}
The following statements hold.
\begin{enumerate}[label=\emph{\textbf{(\arabic*)}}, leftmargin=*]
\item $F_{ij}$ are pairwise disjoint smooth rational $(-4)$-curves on $Y_4$ for $0\leq i<j\leq 4$.
\item The divisor $\sum_{0\leq i<j\leq 4}F_{ij}$  is linearly equivalent to $-2K_Y$.
\item The incidence graph of curves $F_{ij}$ and $F_{(ij)(kl)}$ for $0\leq i<j\leq 4$, $0\leq k<l\leq 4$ and $\{i,j\}\cap \{k,l\}=\emptyset$ is the extended Petersen graph in the \ref{fig2}.
\begin{equation}\label{fig2}
\begin{tikzpicture}[scale=0.37, every node/.style={scale=0.8}]
\draw (-4.7,2) node[scale=0.5]  {$(02)$};
\draw (4.7,2) node[scale=0.5]   {$(13)$};
\draw (0,9.5) node[scale=0.5]   {$(01)$};
\draw (-9.2,3) node[scale=0.5]   {$(34)$};
\draw (9.2,3) node[scale=0.5]   {$(24)$};
\draw (-0.6,4.9)node[scale=0.5]   {$(23)$};
\draw (3.6,-4)node[scale=0.5]   {$(14)$};
\draw (-3.6,-4)node[scale=0.5]   {$(04)$};
\draw (-5.8,-7.6)node[scale=0.5]   {$(12)$};
\draw (5.8,-7.6)node[scale=0.5]   {$(03)$};

\draw (0,2.1) node[scale=0.5]  {$(02)(13)$};
\draw (-2.5,0.65) node[scale=0.5]  {$(23)(04)$};
\draw (2.5,0.65) node[scale=0.5]  {$(23)(14)$};
\draw (1.4,-1.8) node[scale=0.5]  {$(13)(04)$};
\draw (-1.4,-1.8) node[scale=0.5]  {$(02)(14)$};

\draw (-1,7) node[scale=0.5]  {$ (01)(23)$};
\draw (-6.3,2.6) node[scale=0.5]  {$ (34)(02)$};
\draw (6.3,2.6) node[scale=0.5]  {$ (24)(13) $};
\draw (-3.1,-5.8) node[scale=0.5]  {$ (04)(12)$};
\draw (3.1,-5.8) node[scale=0.5]  {$ (14)(03) $};

\draw (-5.4,6) node[scale=0.5]  {$ (01)(34) $};
\draw (5.4,6) node[scale=0.5]  {$ (01)(24) $};
\draw (-8,-2.4) node[scale=0.5]  {$ (34)(12) $};
\draw (8.05,-2.4) node[scale=0.5]  {$ (24)(03) $};
\draw (0,-7.9) node[scale=0.5]  {$ (12)(03) $};

\fill [black] (90 :9) circle (0.2cm);
\fill [black] (162 :9) circle (0.2cm);
\fill [black] (234 :9) circle (0.2cm);
\fill [black] (306 :9) circle (0.2cm);
\fill [black] (18 :9) circle (0.2cm);

\draw [black ,very thick] (90:9) to (54:7.4);
\draw [black, very thick] (90:9) to (126:7.4);
\draw [black,very thick] (162:9) to (126:7.4);
\draw [black,very thick] (162:9) to (198:7.4);
\draw [black,very thick] (234:9) to (198:7.4);
\draw [black,very thick] (234:9) to (270:7.4);
\draw [black,very thick] (306:9) to (270:7.4);
\draw [black,very thick] (306:9) to (342:7.4);
\draw [black,very thick] (18:9) to (342:7.4);
\draw [black,very thick] (18:9) to (54:7.4);

\fill [black] (54 :7.4) circle (0.2cm);
\fill [black] (126 :7.4) circle (0.2cm);
\fill [black] (198 :7.4) circle (0.2cm);
\fill [black] (270 :7.4) circle (0.2cm);
\fill [black] (342 :7.4) circle (0.2cm);

\fill [black] (90 :7) circle (0.2cm);
\fill [black] (162 :7) circle (0.2cm);
\fill [black] (234 :7) circle (0.2cm);
\fill [black] (306 :7) circle (0.2cm);
\fill [black] (18 :7) circle (0.2cm);

\draw [black,very thick] (90:9) to (90:7);
\draw [black,very thick] (162:9) to (162:7);
\draw [black,very thick] (234:9) to (234:7);
\draw [black,very thick] (306:9) to (306:7);
\draw [black,very thick] (18:9) to (18:7);

\draw [black,very thick] (90:7) to (90:5);
\draw [black,very thick] (162:7) to (162:5);
\draw [black,very thick] (234:7) to (234:5);
\draw [black, very thick] (306:7) to (306:5);
\draw [black,very thick] (18:7) to (18:5);

\fill [black] (90 :5) circle (0.2cm);
\fill [black] (162 :5) circle (0.2cm);
\fill [black] (234 :5) circle (0.2cm);
\fill [black] (306 :5) circle (0.2cm);
\fill [black] (18 :5) circle (0.2cm);

\draw [black,very thick] (90:5) to (162:1.6);
\draw [black,very thick] (90:5) to (18:1.6);
\draw [black,very thick] (234:5) to (162:1.6);
\draw [black,very thick] (234:5) to (306:1.6);
\draw [black,very thick] (18:5) to (306:1.6);
\draw [black,very thick] (18:5) to (90:1.6);
\draw [black,very thick] (162:5) to (90:1.6);
\draw [black,very thick] (162:5) to (234:1.6);
\draw [black,very thick] (306:5) to (234:1.6);
\draw [black,very thick] (306:5) to (18:1.6);

\fill [black] (90 :1.6) circle (0.2cm);
\fill [black] (162 :1.6) circle (0.2cm);
\fill [black] (234 :1.6) circle (0.2cm);
\fill [black] (306 :1.6) circle (0.2cm);
\fill [black] (18 :1.6) circle (0.2cm);
\end{tikzpicture}\tag{Figure 2}
\end{equation}
\end{enumerate}
\end{proposition}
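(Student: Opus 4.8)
The plan is to reduce everything to the explicit description of the blow-up $m\colon Y_4\to S_5$ together with Proposition \ref{delpezzo}. First I would record the combinatorial input. By Proposition \ref{delpezzo}(1),(3) the ten curves $E_{ij}$ are smooth rational $(-1)$-curves with $E_{ij}\cdot E_{kl}=1$ if $\{i,j\}\cap\{k,l\}=\emptyset$ and $E_{ij}\cdot E_{kl}=0$ otherwise; in particular, whenever two of them meet, they meet in a single reduced point and hence transversally. Moreover no point of $S_5$ lies on three of the $E_{ij}$, since a common point of $E_{ij}$, $E_{kl}$, $E_{mn}$ would force $\{i,j\}$, $\{k,l\}$, $\{m,n\}$ to be pairwise disjoint two-element subsets of a five-element set, which is impossible. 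Therefore the fifteen centres blown up by $m$ are fifteen distinct smooth points of $S_5$, each of them the transverse intersection of exactly two of the lines, $Y_4$ is again smooth, and each exceptional divisor $F_{(ij)(kl)}$ is a smooth rational curve with $F_{(ij)(kl)}^2=-1$.

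For (1) and (2) I would pass to divisor classes on $Y_4$. Since the Petersen graph is $3$-regular, each $E_{ij}$ passes through exactly the three centres $E_{ij}\cap E_{kl}$ with $\{i,j\}\cap\{k,l\}=\emptyset$, all smooth points of $E_{ij}$; hence $m^*E_{ij}=F_{ij}+\sum_{\{k,l\}\cap\{i,j\}=\emptyset}F_{(ij)(kl)}$, the strict transform $F_{ij}$ is again smooth rational, and $F_{ij}^2=E_{ij}^2-3=-4$. For disjointness, $F_{ij}\cdot F_{kl}=0$ automatically when $\{i,j\}\cap\{k,l\}\ne\emptyset$ (already $E_{ij}\cap E_{kl}=\emptyset$), while for disjoint index sets the identity $m^*E_{ij}\cdot m^*E_{kl}=1$ together with $F_{(ij)(kl)}^2=-1$ gives $F_{ij}\cdot F_{kl}=1+(-1)=0$. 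This proves (1). For (2), summing the displayed pull-back formula over all ten pairs, each exceptional curve $F_{(ij)(kl)}$ appears exactly twice, so $m^*\big(\sum_{ij}E_{ij}\big)=\sum_{ij}F_{ij}+2\sum_{(ij)(kl)}F_{(ij)(kl)}$; comparing with $m^*\big(\sum_{ij}E_{ij}\big)=m^*(-2K_{S_5})=-2K_{Y_4}+2\sum_{(ij)(kl)}F_{(ij)(kl)}$, where I use Proposition \ref{delpezzo}(2) and $K_{Y_4}=m^*K_{S_5}+\sum_{(ij)(kl)}F_{(ij)(kl)}$, yields $\sum_{0\le i<j\le 4}F_{ij}\sim -2K_{Y_4}$.

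For (3) I would compute the remaining intersection numbers among the twenty-five curves $\{F_{ij}\}\cup\{F_{(ij)(kl)}\}$. The exceptional curve $F_{(ij)(kl)}$ over the point $p=E_{ij}\cap E_{kl}$ meets $F_{ij}$ and $F_{kl}$ transversally in one point each (the point of the exceptional $\PP^1$ cut out by the corresponding tangent direction), is disjoint from every other $F_{mn}$ because $p\notin E_{mn}$, and is disjoint from every other exceptional curve because the centres are distinct; together with $F_{ij}\cdot F_{kl}=0$ from (1) this shows that the incidence graph of these curves is obtained from the Petersen graph of Proposition \ref{delpezzo}(3) by inserting the new vertex $F_{(ij)(kl)}$ in the interior of each edge joining $F_{ij}$ to $F_{kl}$. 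That subdivided graph is precisely the extended Petersen graph pictured in \ref{fig2}, which proves (3).

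The one step that needs genuine care is the combinatorial bookkeeping of the first paragraph: one must be sure that the fifteen centres are honestly distinct reduced points and that $m$ is the blow-up of $S_5$ along fifteen independent smooth points, so that all the self-intersections and transversality assertions used afterwards are the naive ones. Once that is established, (1)--(3) are a straightforward, if slightly lengthy, computation with divisor classes and intersection numbers on the blow-up.
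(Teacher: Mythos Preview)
Your argument is correct and is exactly the kind of verification the paper has in mind: the paper gives no proof of this proposition beyond the sentence ``Next result is a consequence of the Proposition~\ref{delpezzo},'' and your write-up simply unpacks that consequence via standard blow-up formulas. The only cosmetic point is that your computation $F_{ij}\cdot F_{kl}=1+(-1)=0$ is implicitly using the projection formula $m^*E_{ij}\cdot F_{(ab)(cd)}=0$ together with the fact that the two sums of exceptional divisors share exactly the one curve $F_{(ij)(kl)}$; it would read more cleanly if you said so explicitly, but the mathematics is fine.
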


\begin{definition}
We define $X_4$ to be the anticanonical degree two covering of $Y_4$ branched along $B_{\pi}=\sum_{0\leq i<j\leq 4}F_{ij}$. We denote by $\pi:X_4\ra Y_4$ the anticanonical covering map and by $\sigma:X_4\ra X_4$ the unique automorphism of $\pi$ having order two. 
\end{definition}
\noindent
Since $F_{ij}$ are branch curves of $\pi$, we derive that $\pi^*F_{ij}=2L_{ij}$ for some $(-2)$-curve $L_{ij}$ on $X_4$ for $0\leq i<j\leq 4$. These curves are pairwise disjoint and $\coprod_{0\leq i<j\leq 4}L_{ij}$ is a fixed locus of $\sigma$ on $X_4$.

\begin{proposition}\label{linearization}
$X_4$ is a K3 surface with $\rho(X_4)=20$. The automorphism $\sigma$ acts as identity on the Picard group of $X_4$ and every line bundle $\cL$ on $X_4$ is linearizable with respect to the group $\{1_{X_4},\sigma\}$.
\end{proposition}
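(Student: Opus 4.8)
Here is a plan for proving Proposition~\ref{linearization}. The idea is to verify the three assertions one after another; once the geometry of $Y_4$ recorded in Proposition~\ref{petersen} is in hand, each of them becomes short.

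First, that $X_4$ is a K3 surface. By Proposition~\ref{petersen}(1) the branch divisor $B_\pi=\sum_{0\leq i<j\leq 4}F_{ij}$ is a disjoint union of smooth curves, hence is smooth, so the anticanonical double cover $\pi\colon X_4\ra Y_4$ is a smooth projective surface. Since $\pi$ is the covering attached to $\omega_{Y_4}^{\vee}$ with $B_\pi\in|(\omega_{Y_4}^{\vee})^{\otimes 2}|$, the double-cover formula for the dualizing sheaf gives $\omega_{X_4}\cong\pi^*(\omega_{Y_4}\otimes\omega_{Y_4}^{\vee})\cong\cO_{X_4}$ — which is exactly the point of passing to the \emph{anticanonical} covering. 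For the irregularity, $\pi_*\cO_{X_4}\cong\cO_{Y_4}\oplus\omega_{Y_4}$ yields $\mathrm{H}^1(X_4,\cO_{X_4})\cong\mathrm{H}^1(Y_4,\cO_{Y_4})\oplus\mathrm{H}^1(Y_4,\omega_{Y_4})$, and both summands vanish, the first because $Y_4$ is a rational surface, the second by Serre duality. Hence $X_4$ is a K3 surface.

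Next I would compute $\rho(X_4)=20$. Since $S_5$ is the blow-up of $\PP^2$ at four points, $\rho(S_5)=5$, and $m\colon Y_4\ra S_5$ blows up the intersection points of the ten $(-1)$-curves; these are $15$ distinct points since the Petersen graph is triangle-free (no three of the curves are concurrent), so $\rho(Y_4)=5+15=20$. As $\pi$ is finite of degree two, the projection formula gives $\pi_*\pi^*=2$ on cohomology, so $\pi^*\colon\mathrm{NS}(Y_4)_{\QQ}\to\mathrm{NS}(X_4)_{\QQ}$ is injective and $\rho(X_4)\geq 20$; together with the bound $\rho\leq 20$ valid for any K3 surface this forces $\rho(X_4)=20$. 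For the action of $\sigma$ on $\mathrm{Pic}(X_4)$, recall that for a K3 surface $\mathrm{Pic}(X_4)=\mathrm{NS}(X_4)$ is torsion-free. The relation $\pi\circ\sigma=\pi$ gives $\sigma^*\circ\pi^*=\pi^*$, so $\sigma^*$ fixes $\pi^*\mathrm{NS}(Y_4)$ pointwise; by the rank count this sublattice has full rank $20$ in $\mathrm{NS}(X_4)$, so $\sigma^*$ acts trivially on $\mathrm{NS}(X_4)\otimes\QQ$, and torsion-freeness upgrades this to $\sigma^*=\mathrm{id}$ on $\mathrm{Pic}(X_4)$.

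Finally, linearizability of an arbitrary $\cL\in\mathrm{Pic}(X_4)$: by the previous step there is an isomorphism $\phi\colon\sigma^*\cL\to\cL$, and, using the canonical identification $\sigma^*\sigma^*\cL\cong(\sigma\circ\sigma)^*\cL=\cL$, the composite $\phi\circ\sigma^*(\phi)$ is an automorphism of $\cL$, hence multiplication by some $\lambda\in\CC^*$ because $X_4$ is connected and projective over $\CC$. Choosing $\mu\in\CC^*$ with $\mu^2=\lambda$ and using that $\sigma$ acts trivially on $\mathrm{H}^0(X_4,\cO_{X_4}^{*})=\CC^*$, one checks that $\mu^{-1}\phi$ satisfies the cocycle condition $(\mu^{-1}\phi)\circ\sigma^*(\mu^{-1}\phi)=\mathrm{id}_{\cL}$, i.e.\ it defines a $\{1_{X_4},\sigma\}$-linearization of $\cL$; equivalently, the obstruction lies in $\mathrm{H}^2(\ZZ/2\ZZ,\CC^*)=0$. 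None of the steps presents a genuine difficulty; the only place calling for care is the rank count for $Y_4$ (distinctness of the fifteen centres of the blow-up and rational injectivity of $\pi^*$), after which the statements about $\sigma$ and about linearization are purely formal.
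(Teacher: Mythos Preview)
Your proof is correct and follows essentially the same route as the paper: smoothness from smoothness of the branch locus, triviality of $\omega_{X_4}$ via the double-cover/Hurwitz formula for the anticanonical covering, vanishing of $\mathrm{H}^1$ via $\pi_*\cO_{X_4}\cong\cO_{Y_4}\oplus\omega_{Y_4}$ and Serre duality, the rank count $\rho(Y_4)=20$ together with injectivity of $\pi^*$ from $\pi_*\pi^*=2$, and $\sigma^*=1$ on $\mathrm{Pic}(X_4)$ from torsion-freeness. The only cosmetic difference is in the linearizability step: the paper quotes the exact sequence $\mathrm{Pic}^{\langle\sigma\rangle}(X_4)\to\mathrm{Pic}(X_4)^{\langle\sigma\rangle}\to\mathrm{H}^2(\langle\sigma\rangle,\CC^*)$ and the vanishing $\mathrm{H}^2(\ZZ/2\ZZ,\CC^*)=0$, whereas you unwind this explicitly by choosing a square root of the cocycle $\lambda$; these are the same argument in different packaging.
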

\begin{proof}
Note that the branch divisor of $\pi$ is smooth. Hence $X_4$ is a smooth surface. Clearly it is projective as a finite covering of a projective surface $Y_4$.\\
First by Riemann-Hurwitz formula and Proposition \ref{petersen} we derive that
$$\Omega^2_{X_4}=\pi^*\Omega^2_{Y_4}\otimes_{\cO_{X_4}}\cO_{X_4}\left(\sum_{0\leq i<j\leq 4}L_{ij}\right)\cong  \pi^*\cO_{Y_4}(K_{Y_4})\otimes_{\cO_{X_4}}\pi^*\cO_{Y_4}\left(\frac{1}{2}\sum_{0\leq i<j\leq 4}F_{ij}\right)\cong \pi^*\cO_{Y_4}\cong \cO_{X_4}$$
Next $\pi_*\cO_{X_4}=\cO_{Y_4}\oplus \cO_{Y_4}(K_{Y_4})$ and $\mathrm{H}^1(Y_4,\cO_{Y_4}(K_{Y_4}))=\mathrm{H}^1(Y_4,\cO_{Y_4})=0$ by Serre duality. Hence $\mathrm{H}^1(X_4,\pi_*\cO_{X_4})=0$. Now by Leray spectral sequence we have $\mathrm{H}^1(X_4,\cO_{X_4})\cong \mathrm{H}^1(\pi_*\cO_{X_4})=0$. Therefore, $X_4$ is a K3 surface.\\
Note that $\mathrm{rank}_{\ZZ}(\mathrm{Cl}(Y_4))=20$ and we have morphisms of abelian groups $\pi^*:\mathrm{Cl}(Y_4)\ra \mathrm{Cl}(X_4)$ and $\pi_*:\mathrm{Cl}(X_4)\ra \mathrm{Cl}(Y_4)$
such that $\pi_*\pi^*=\mathrm{deg}(\pi)1_{\mathrm{Cl}(Y_4)}=2\cdot 1_{\mathrm{Cl}(Y_4)}$. Thus $\pi^*$ is a monomorphism. Since $Y_4$ and $X_4$ are smooth, we derive that $\pi^*(\mathrm{Pic}(Y_4))=\pi^*(\mathrm{Cl}(Y_4))\subseteq \mathrm{Cl}(X_4)=\mathrm{Pic}(X_4)$ is a subgroup of rank $20$. Thus $\rho(X_4)=20$.\\
For every line bundle $\cL$ on $Y_4$ we have
$$\sigma^*(\pi^*\cL)=(\pi\sigma)^*(\cL)=\pi^*\cL$$
Thus $\sigma$ acts as identity on $\pi^*(\mathrm{Pic}(Y_4))$. Since this is a subgroup of maximal rank in the torsion free group $\mathrm{Pic}(X_4)$, we derive that $\sigma^*=1_{\mathrm{Pic}(X_4)}$.\\
Denote by $\left<\sigma \right>$ the subgroup of $\mathrm{Aut}(X_4)$ generated by $\sigma$.  Let $\mathrm{Pic}^{\left<\sigma \right>}(X_4)$ be the group  of $\left<\sigma \right>$-linearized line bundles on $X_4$ and let $\mathrm{Pic}(X_4)^{\left<\sigma \right>}$ be the group of line bundles on $X_4$ invariant with respect to the action of $\left<\sigma \right>$. Since $\sigma^*=1_{\mathrm{Pic}(X_4)}$, we derive that $\mathrm{Pic}(X_4)^{\left<\sigma \right>}=\mathrm{Pic}(X_4)$. Now according to \cite[Remark 7.2]{DIN} there is an exact sequence
\begin{center}
\begin{tikzpicture}
[description/.style={fill=white,inner sep=2pt}]
\matrix (m) [matrix of math nodes, row sep=3em, column sep=2em,text height=1.5ex, text depth=0.25ex] 
{ 0  & \Hom(\left<\sigma \right>,\CC^*)&    \mathrm{Pic}^{\left<\sigma \right>}(X_4) &\mathrm{Pic}(X_4)^{\left<\sigma \right>}&\mathrm{H}^2(\left<\sigma \right>,\CC^*)           \\} ;
\path[->,font=\scriptsize]  
(m-1-1) edge node[auto] {$  $} (m-1-2)
(m-1-2) edge node[auto] {$ $} (m-1-3)
(m-1-3) edge node[auto] {$ $} (m-1-4)
(m-1-4) edge node[auto] {$ $} (m-1-5);
\end{tikzpicture}
\end{center}
where the arrow $\mathrm{Pic}^{\left<\sigma \right>}(X_4)\ra \mathrm{Pic}(X_4)^{\left<\sigma \right>}=\mathrm{Pic}(X_4)$ forgets about the $\left<\sigma \right>$-linearization and $\CC^*$ is a trivial $\left<\sigma \right>$-module. According to $\left<\sigma \right>\cong \ZZ/2\ZZ$ and \cite[Theorem 6.2.2]{WEIBEL} we derive that if $\CC^*$ admits trivial action of $\ZZ/2\ZZ$, then $\mathrm{H}^2(\ZZ/2\ZZ,\CC^*)=0$. This proves that arrow $\mathrm{Pic}^{\left<\sigma \right>}(X_4)\ra \mathrm{Pic}(X_4)^{\left<\sigma \right>}=\mathrm{Pic}(X_4)$ is surjective and hence every line bundle on $X_4$ admits a $\left<\sigma \right>$-linearization.
\end{proof}

\begin{lemma}\label{lemma}
Let $F$ be a curve on $Y_4$ with negative self intersection. Then either $\pi^*F=2L$ or $\pi^*F=L$ for some $(-2)$-curve $L$ on $X_4$. The first case holds if and only if $F$ is a branch curve of $\pi$.
\end{lemma}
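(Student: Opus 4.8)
The plan is to split according to whether the given curve $F$ (which we may and do take to be reduced and irreducible) is a branch curve of $\pi$ or not. If $F$ is a branch curve then $F=F_{ij}$ for some $0\le i<j\le 4$, and, as recorded just before the statement, $\pi^{*}F_{ij}=2L_{ij}$ with $L_{ij}=\pi^{-1}(F_{ij})_{\mathrm{red}}$; since the reduced ramification locus of a double cover maps isomorphically onto the branch locus, $L_{ij}\cong F_{ij}$ is rational, and the projection formula gives $4L_{ij}^{2}=(2L_{ij})^{2}=(\pi^{*}F_{ij})^{2}=2F_{ij}^{2}=-8$, so $L_{ij}^{2}=-2$. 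Thus the first alternative holds for every branch curve, with $L_{ij}$ a $(-2)$-curve.

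Now suppose $F$ is not a branch curve. Since $B_{\pi}=\sum_{i<j}F_{ij}$ and $F$ is irreducible, $F\not\subseteq B_{\pi}$, so $\pi$ is \'etale over the generic point of $F$; hence $\pi^{*}F$ is a reduced divisor whose support $\pi^{-1}(F)$ has one or two irreducible components, each dominating $F$. The key step is to rule out the two‑component case. Suppose $\pi^{*}F=C_{1}+C_{2}$ with $C_{1}\neq C_{2}$. Then $\sigma$ interchanges $C_{1}$ and $C_{2}$ (the covering involution acts freely over $Y_{4}\setminus B_{\pi}$, which contains the generic point of $F$); since $\sigma^{*}=\mathrm{id}$ on $\mathrm{Pic}(X_{4})$ by Proposition \ref{linearization} and $\mathrm{Pic}(X_{4})$ is torsion free, $C_{1}\sim\sigma^{*}C_{1}=C_{2}$, so $C_{1}\cdot C_{2}=C_{1}^{2}=C_{2}^{2}$ and
\[
2F^{2}=(\pi^{*}F)^{2}=(C_{1}+C_{2})^{2}=4C_{1}^{2}.
\]
But $C_{1}$ and $C_{2}$ are distinct irreducible curves on a smooth surface, hence $C_{1}\cdot C_{2}\ge 0$, i.e. $C_{1}^{2}\ge 0$, forcing $F^{2}\ge 0$ and contradicting $F^{2}<0$. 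Therefore $\pi^{-1}(F)$ is irreducible and $\pi^{*}F=L$ for a reduced irreducible curve $L$ with $L^{2}=(\pi^{*}F)^{2}=2F^{2}<0$; as $L$ is an integral curve on the K3 surface $X_{4}$, adjunction gives $L^{2}\ge -2$, whence $F^{2}=-1$, $L^{2}=-2$ and $p_{a}(L)=0$, i.e. $L$ is a $(-2)$-curve. Since $\pi^{*}F=L$ is reduced and irreducible it is not $2L'$ for any divisor $L'$, so the first alternative does not occur; this gives the asserted equivalence.

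The only non-formal ingredient is the exclusion of the split (two-component) case, and the device that makes it immediate is that $\sigma$ acts trivially on the torsion-free group $\mathrm{Pic}(X_{4})$: this forces the two hypothetical components to be linearly equivalent, after which the self-intersection bookkeeping collapses to a contradiction with $F^{2}<0$. Everything else — reducedness of $\pi^{*}F$ away from the branch divisor, the projection formula, and the classical fact that an integral curve of negative self-intersection on a K3 surface is a smooth rational $(-2)$-curve — is standard. A pleasant byproduct of this argument is that the negative curves on $Y_{4}$ are exactly the ten $(-4)$-curves $F_{ij}$ together with the $(-1)$-curves, in agreement with Proposition \ref{classes}.
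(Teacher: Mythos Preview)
Your proof is correct and follows essentially the same approach as the paper: both arguments eliminate the two-component possibility by observing that $\sigma$ swaps the components while $\sigma^*=1_{\mathrm{Pic}(X_4)}$ forces them to be linearly equivalent, yielding a nonnegative self-intersection in contradiction with $F^2<0$. The only difference is organizational---you split first on whether $F$ is a branch curve, whereas the paper lists the three a priori shapes of $\pi^*F$ and rules out the third---but the mathematical content is identical.
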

\begin{proof}
There are three possibilities.
\begin{enumerate}[label=\textbf{(\arabic*)}, leftmargin=*]
\item $\pi^*F=2L$ where $L$ is a curve on $X_4$ and the map $L\ra F$ induced by $\pi$ is birational.
\item $\pi^*F=L$ where $L$ is a curve on $X_4$ and the map $L\ra F$ induced by $\pi$ is of degree two.
\item $\pi^*F=L_1+L_2$ where $L_1$, $L_2$ are distinct curves on $X_4$.
\end{enumerate}
Note that in \textbf{(3)} we have $L_1^2+L_2^2+2L_1.L_2=(\pi^*F)^2<0$. Observe that $\sigma^*L_1=L_2$ as $\sigma$ acts transitively on fibers of $\pi$. According to the fact that $\sigma^*=1_{\mathrm{Pic}(X_4)}$, we derive that $L_1\sim L_2$ and hence $L_1^2+L_2^2+2L_1.L_2=4L_1.L_2>0$ a contradiction. So the only possibilities remaining are \textbf{(1)} and \textbf{(2)}. In both cases $L^2<0$ and since $X_4$ is a K3 surface, we derive that $L$ is a $(-2)$-curve on $X_4$.\\
Moreover, \textbf{(1)} holds if the ramification index of $\pi$ at the generic point of $L$ is $2$ and this implies that $F$ is a branch curve of $\pi$.
\end{proof}
\noindent
For $\{i,j\}$, $\{k,l\}\subseteq \{0,1,2,3,4\}$ and $\{i,j\}\cap \{k,l\}=\emptyset$ we define $L_{(ij)(kl)}=\pi^*F_{(ij)(kl)}$. 

\begin{corollary}\label{petersencor}
Every curve $L_{(ij)(kl)}$ is a smooth rational curve on $X_4$.\\
The incidence graph of curves  $L_{ij}$ for $0\leq i<j\leq 4$ and $L_{(ij)(kl)}$ for $\{i,j\}$, $\{k,l\}\subseteq \{0,1,2,3,4\}$ and $\{i,j\}\cap \{k,l\}=\emptyset$ is the extended Petersen graph.
\end{corollary}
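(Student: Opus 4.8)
The plan is to derive the statement directly from Lemma \ref{lemma}, the combinatorics of Proposition \ref{petersen}, and the projection formula for the degree-two morphism $\pi$.

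First I would establish smoothness and rationality of $L_{(ij)(kl)}$. Each $F_{(ij)(kl)}$ is an exceptional divisor of the blow-up $m\colon Y_4\to S_5$, hence a $(-1)$-curve, so it has negative self-intersection; and it is not among the branch curves of $\pi$, since those are exactly the $(-4)$-curves $F_{ij}$ that make up the branch divisor $B_\pi=\sum_{0\le i<j\le 4}F_{ij}$. Thus Lemma \ref{lemma} applies and lands us in its second case: $\pi^*F_{(ij)(kl)}=L_{(ij)(kl)}$ with $L_{(ij)(kl)}$ a $(-2)$-curve on the K3 surface $X_4$; by adjunction such a curve has arithmetic genus $0$ and is therefore smooth and rational. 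I record that the coefficient here is $1$, not $2$, as this is what the incidence computation needs.

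Next I would compare the incidence pattern of the $L$-curves with that of the $F$-curves by means of the projection formula $\pi^*D\cdot\pi^*D'=\deg(\pi)\,(D\cdot D')=2\,(D\cdot D')$, using $\pi^*F_{ij}=2L_{ij}$ and $\pi^*F_{(ij)(kl)}=L_{(ij)(kl)}$. The curves $L_{ij}$ and $L_{(ij)(kl)}$ are pairwise distinct because their images under $\pi$ are the distinct curves $F_{ij}$ and $F_{(ij)(kl)}$. Applying the projection formula to the three possible kinds of pair of negative curves on $Y_4$, and invoking that the $F_{ij}$ are pairwise disjoint (Proposition \ref{petersen}(1)), that $F_{(ij)(kl)}\cdot F_{ab}=1$ exactly when $\{a,b\}$ is $\{i,j\}$ or $\{k,l\}$ and $0$ otherwise, and that exceptional curves of $m$ over distinct points are disjoint, one reads off
\[
L_{ij}\cdot L_{ab}=0,\qquad L_{(ij)(kl)}\cdot L_{ab}=F_{(ij)(kl)}\cdot F_{ab},\qquad L_{(ij)(kl)}\cdot L_{(pq)(rs)}=0
\]
for distinct index sets. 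Since distinct irreducible curves on a smooth surface meet precisely when their intersection number is positive, the incidence graph of $\{L_{ij}\}\cup\{L_{(ij)(kl)}\}$ equals that of $\{F_{ij}\}\cup\{F_{(ij)(kl)}\}$, which is the extended Petersen graph by Proposition \ref{petersen}(3).

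The only delicate point is the bookkeeping of the pullback multiplicities: in the branch–branch case the projection formula gives $4\,L_{ij}\cdot L_{ab}=2\,F_{ij}\cdot F_{ab}$, and one genuinely needs $F_{ij}\cdot F_{ab}=0$ to conclude (a positive value would contradict divisibility by $4$, but no such value occurs); in the branch–exceptional case it gives the clean equality $L_{(ij)(kl)}\cdot L_{ab}=F_{(ij)(kl)}\cdot F_{ab}$, which is exactly the identity transporting each subdivided edge of the Petersen graph. Apart from this, I do not anticipate any obstacle: the corollary is essentially an unwinding of Lemma \ref{lemma} together with Proposition \ref{petersen}.
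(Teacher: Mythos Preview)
Your proposal is correct and follows essentially the same approach as the paper: the paper invokes Lemma \ref{lemma} together with $F_{(ij)(kl)}^2=-1$ for the first assertion, and Proposition \ref{petersen} plus the fact that $\pi^*$ preserves the intersection pairing up to the factor $\deg(\pi)$ for the second. Your write-up is simply a more detailed unpacking of these two sentences, with the pullback-multiplicity bookkeeping made explicit.
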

\begin{proof} 
The first assertion is a direct consequence of Lemma \ref{lemma} and the fact that $F_{(ij)(kl)}^2=-1$. The second assertion follows from Proposition \ref{petersen} and the fact that $\pi^*$ preserves intersection pairing up to multiplication by degree of $\pi$. 
\end{proof}

\begin{tabel}\label{table}
The following table collects information about all curves defined so far.
\begin{center}
\begin{tabular}{ | p{3cm} | p{3.5cm} | p{3.5cm} | p{3.5cm} |}
\hline
\vspace{0.5pt}$S_5$ (Prop. \ref{delpezzo})& \vspace{0.5pt}$E_{ij}$ for $0\leq i<j\leq 4$. These are all $(-1)$-curves on $S_5$. &   & The Petersen graph \ref{fig1} describes their intersection. \\ \hline
\vspace{0.5pt}$Y_4$ (Prop. \ref{petersen})& \vspace{0.5pt} The strict transform $F_{ij}$ of $E_{ij}$ for $0\leq i<j\leq 4$  . These are smooth, rational $(-4)$-curves.  & \vspace{0.5pt} Blow up $F_{(ij)(kl)}$  of intersection point $E_{ij}\cap E_{kl}$ for $\{i,j\}$, $\{k,l\}\subseteq \{0,1,2,3,4\}$ and $\{i,j\}\cap \{k,l\}=\emptyset$. These are $(-1)$-curves.  & The extended Petersen graph \ref{fig2} describes their intersection. \\ \hline
\vspace{0.5pt} $X_4$ (Cor. \ref{petersencor})  & \vspace{0.5pt} $L_{ij}=\frac{1}{2}\pi^*F_{ij}$  for $0\leq i<j\leq 4$. These are $(-2)$-curves. & \vspace{0.5pt}  $L_{(ij)(kl)}=\pi^*F_{(ij)(kl)}$ for $\{i,j\}$, $\{k,l\}\subseteq \{0,1,2,3,4\}$ and $\{i,j\}\cap \{k,l\}=\emptyset$. These are $(-2)$-curves.& The extended Petersen graph \ref{fig2} describes their intersection. \\ \hline
\end{tabular}
\end{center}
\end{tabel}
\hspace{1cm}

\begin{proposition}\label{transc}
The cup product pairing restricted to the transcendental lattice $T_{X_4}$ is given by the matrix
$$\left[ \begin{array}{cc}
2 &0 \\
0& 2 \end{array} \right]$$
with respect to some basis of $T_{X_4}$.
\end{proposition}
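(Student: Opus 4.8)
The strategy is to produce an elliptic fibration on $X_4$ out of a conic bundle on $S_5$, determine its singular fibres, and then apply Shioda's formulas (Theorem~\ref{Shioda}). By Proposition~\ref{delpezzo} there is a conic fibration $g\colon S_5\ra\PP^1$ whose reducible fibres are $E_{12}+E_{03}$, $E_{01}+E_{23}$ and $E_{02}+E_{13}$ (the thick edges of Figure~1); concretely $g$ is the pencil of strict transforms on $S_5$ of lines in $\PP^2$ through one of the four blown-up points, and its sections are $E_{04},E_{14},E_{24},E_{34}$. I would consider the composite $p\colon X_4\xrightarrow{\pi}Y_4\xrightarrow{m}S_5\xrightarrow{g}\PP^1$. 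Each of the fifteen points blown up by $m$ lies on one of the three reducible conics, so a general conic $C$ of the pencil is irreducible and disjoint from all of them; hence its strict transform $\widetilde C\cong\PP^1$ in $Y_4$ satisfies $\widetilde C\cdot F_{ij}=C\cdot E_{ij}$, which equals $1$ precisely when $ij\in\{04,14,24,34\}$ and is $0$ for the remaining six $(-4)$-curves. Thus $\widetilde C$ meets the branch divisor $B_\pi=\sum F_{ij}$ transversally in exactly four points, so $\pi^{-1}(\widetilde C)\ra\widetilde C$ is a double cover of $\PP^1$ branched in four points, i.e.\ a smooth genus-one curve; therefore $p$ is an elliptic fibration (this could also be deduced from Corollary~\ref{ellipticK3}). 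Moreover $p$ has a section, for instance $L_{04}$: since $F_{04}$ is a branch curve, $\pi^*F_{04}=2L_{04}$ by Lemma~\ref{lemma}, and $L_{04}\cdot p^*t=F_{04}\cdot m^*C=E_{04}\cdot C=1$.

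Because all fifteen blown-up points lie on the three reducible conics, the fibre of $p$ over any $t$ outside the three points $t_1,t_2,t_3$ (where the conic degenerates) is the smooth genus-one curve described above; hence $p$ has exactly three singular fibres. To identify the fibre over $t_1$, where $C=E_{12}+E_{03}$, I would compute the total transform $m^*(E_{12}+E_{03})$: it has a single multiplicity-$2$ component, the exceptional curve $F_{(12)(03)}$, and all other components of multiplicity $1$. Pulling back by $\pi$ — using Lemma~\ref{lemma} and Corollary~\ref{petersencor}, and that $F_{12},F_{03}$ are branch curves while the $F_{(\cdot)(\cdot)}$ are not — yields
\[
p^*t_1=2L_{12}+2L_{(12)(03)}+2L_{03}+L_{(12)(04)}+L_{(12)(34)}+L_{(03)(14)}+L_{(03)(24)}.
\]
By the incidence data of the extended Petersen graph (Corollary~\ref{petersencor}) this is a chain $L_{12}-L_{(12)(03)}-L_{03}$ of multiplicity-$2$ curves carrying two multiplicity-$1$ curves at each end: the dual graph $\widetilde D_6$, so the fibre has Kodaira type $I_2^{\ast}$, with seven components, four of multiplicity one. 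The same computation (or the symmetry permuting the indices $\{0,1,2,3\}$) shows that the fibres over $t_2$ and $t_3$ are also of type $I_2^{\ast}$.

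Now apply Theorem~\ref{Shioda}. Part~(1) gives $\rho(X_4)=2+r(p)+3\cdot(7-1)=20+r(p)$, and since $\rho(X_4)=20$ by Proposition~\ref{linearization}, the Mordell--Weil rank is $r(p)=0$. The intersection computation used for $L_{04}$ shows equally that $L_{04},L_{14},L_{24},L_{34}$ are four sections of $p$, and they are distinct since they map to the pairwise disjoint $(-4)$-curves $F_{04},F_{14},F_{24},F_{34}$; hence the order $n(p)$ of the group of sections is at least $4$. Part~(2) then reads $|\det(T_{X_4})|=4^3/n(p)^2=64/n(p)^2$, so $n(p)^2\mid 64$, which together with $n(p)\ge 4$ leaves only $n(p)\in\{4,8\}$. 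But $n(p)=8$ would force $\det(T_{X_4})=1$, which is impossible: $T_{X_4}$ has rank $22-\rho(X_4)=2$, it is even (being a sublattice of the even lattice $\mathrm{H}^2(X_4,\ZZ)$) and positive definite (its signature is $(2,0)$ since $S_{X_4}$ has signature $(1,19)$), while even positive-definite unimodular lattices have rank divisible by $8$. Therefore $n(p)=4$ and $|\det(T_{X_4})|=4$.

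Finally, an even positive-definite lattice of rank $2$ and determinant $4$ is unique up to isometry: if $v$ is a shortest nonzero vector then $m:=(v,v)$ is even and $\ge 2$, and extending to a basis $\{v,w\}$ with $0\le (v,w)\le m/2$ gives $4=(v,v)(w,w)-(v,w)^2\ge m^2-m^2/4=\tfrac{3}{4}m^2$, forcing $m=2$, then $(v,w)=0$ and $(w,w)=2$. So in a suitable basis the Gram matrix of $T_{X_4}$ is $\left[\begin{smallmatrix}2&0\\0&2\end{smallmatrix}\right]$, as asserted. The one delicate step is the determination of the singular fibre $p^*t_1$ — correctly getting the multiplicity $2$ on $F_{(12)(03)}$ and recognising the configuration as $\widetilde D_6$ rather than a union of smaller Kodaira fibres — together with verifying that $p$ has no further singular fibres; the remainder is bookkeeping with the Petersen graph and Theorem~\ref{Shioda}.
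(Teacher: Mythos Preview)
Your argument is correct and follows essentially the same route as the paper: the same conic bundle on $S_5$ (with reducible fibres $E_{01}+E_{23}$, $E_{02}+E_{13}$, $E_{12}+E_{03}$) is pulled back to an elliptic fibration on $X_4$ with three $\tilde D_6$ fibres and at least four sections $L_{04},L_{14},L_{24},L_{34}$, and Shioda's formulas then force $|\det T_{X_4}|=4$. You are more explicit than the paper in computing the fibre $p^*t_1$, in excluding further singular fibres, and in the final classification of rank-two even positive-definite lattices of discriminant~$4$, but the strategy is identical.
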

\begin{proof}
Let us come back to the Del Pezzo surface $S_5=\mathrm{Bl}_P(\PP^2)$. The following divisors:
$$E_{01}+E_{23},\,E_{02}+E_{13},\,E_{12}+E_{03}$$
are linearly equivalent. Their linear system is base point free and its members consist of degree two divisors with respect to very ample divisor $-K_{S_5}$ and gives rise to the morphism $f:S_5 \ra \PP^1$ whose fibers are curves of degree two with respect to $-K_{S_5}$. This conic bundle has exactly three singular fibers given by the three divisors listed above. The morphism $p=f\cdot m\cdot \pi$ is an elliptic fibration. Recall that by Proposition $\ref{petersencor}$ the incidence graph of curves $L_{ij}$ and $L_{(ij)(kl)}$ for $0\leq i<j\leq 4$, $0\leq k<l\leq 4$ and $\{i,j\}\cap \{k,l\}=\emptyset$ is the extended Petersen graph. There are exactly three singular fibers of $p$ given by pulling back along $m\pi$ singular fibers of $f$. They are depicted in the \ref{fig3}.
\begin{equation}\label{fig3}
\begin{tikzpicture}[scale=0.37, every node/.style={scale=0.8}]
\draw [myred, very thick, line width=1mm] (90:9) to (54:7.4);
\draw [myred, very thick, line width=1mm] (90:9) to (126:7.4);
\draw [black, very thick] (162:9) to (126:7.4);
\draw [black, very thick] (162:9) to (198:7.4);
\draw [mygreen, very thick, line width=1mm] (234:9) to (198:7.4);
\draw [mygreen, very thick, line width=1mm] (234:9) to (270:7.4);
\draw [mygreen, very thick, line width=1mm] (306:9) to (270:7.4);
\draw [mygreen,very thick, line width=1mm] (306:9) to (342:7.4);
\draw [black, very thick] (18:9) to (342:7.4);
\draw [black, very thick] (18:9) to (54:7.4);

\draw [myred, very thick, line width=1mm] (90:9) to (90:7);
\draw [black, very thick] (162:9) to (162:7);
\draw [mygreen, very thick, line width=1mm] (234:9) to (234:7);
\draw [mygreen, very thick, line width=1mm] (306:9) to (306:7);
\draw [black,very thick] (18:9) to (18:7);

\draw [myred, very thick, line width=1mm] (90:7) to (90:5);
\draw [myblue, very thick, line width=1mm] (162:7) to (162:5);
\draw [black, very thick] (234:7) to (234:5);
\draw [black, very thick] (306:7) to (306:5);
\draw [myblue,very thick, line width=1mm] (18:7) to (18:5);

\draw [myred,very thick, line width=1mm] (90:5) to (162:1.6);
\draw [myred, very thick, line width=1mm] (90:5) to (18:1.6);
\draw [black,very thick] (234:5) to (162:1.6);
\draw [black, very thick] (234:5) to (306:1.6);
\draw [myblue, very thick, line width=1mm] (18:5) to (306:1.6);
\draw [myblue, very thick, line width=1mm] (18:5) to (90:1.6);
\draw [myblue, very thick, line width=1mm] (162:5) to (90:1.6);
\draw [myblue, very thick, line width=1mm] (162:5) to (234:1.6);
\draw [black, very thick] (306:5) to (234:1.6);
\draw [black, very thick] (306:5) to (18:1.6);

\fill [myred] (90 :9) circle (0.2cm);
\fill [black] (162 :9) circle (0.2cm);
\fill [mygreen] (234 :9) circle (0.2cm);
\fill [mygreen] (306 :9) circle (0.2cm);
\fill [black] (18 :9) circle (0.2cm);

\fill [myred] (54 :7.4) circle (0.2cm);
\fill [myred] (126 :7.4) circle (0.2cm);
\fill [mygreen] (198 :7.4) circle (0.2cm);
\fill [mygreen] (270 :7.4) circle (0.2cm);
\fill [mygreen] (342 :7.4) circle (0.2cm);

\fill [myred] (90 :7) circle (0.2cm);
\fill [myblue] (162 :7) circle (0.2cm);
\fill [mygreen] (234 :7) circle (0.2cm);
\fill [mygreen] (306 :7) circle (0.2cm);
\fill [myblue] (18 :7) circle (0.2cm);

\fill [myred] (90 :5) circle (0.2cm);
\fill [myblue] (162 :5) circle (0.2cm);
\fill [black] (234 :5) circle (0.2cm);
\fill [black] (306 :5) circle (0.2cm);
\fill [myblue] (18 :5) circle (0.2cm);

\fill [myblue] (90 :1.6) circle (0.2cm);
\fill [myred] (162 :1.6) circle (0.2cm);
\fill [myblue] (234 :1.6) circle (0.2cm);
\fill [myblue] (306 :1.6) circle (0.2cm);
\fill [myred] (18 :1.6) circle (0.2cm);

\draw (-4.7,2) node[myblue,scale=0.6]  {$ 2 $};
\draw (4.7,2) node[myblue,scale=0.6]  {$2$};
\draw (0,9.5) node[myred,scale=0.6]  {$2$};
\draw (-8.9,2.9) node  {$s$};
\draw (9,2.9) node {$s$};
\draw (-0.4,5)node[myred,scale=0.6]  {$2$};
\draw (3.4,-4)node  {$s$};
\draw (-3.3,-4)node  {$s$};
\draw (-5.7,-7.4)node[mygreen,scale=0.6]  {$2$};
\draw (5.7,-7.4)node[mygreen,scale=0.6]  {$2$};

\draw (0,2) node[myblue,scale=0.6]  {$2$};
\draw (-1.9,0.5) node[myred,scale=0.6]  {$1$};
\draw (1.9,0.5) node[myred,scale=0.6]  {$1$};
\draw (1.3,-1.5) node[myblue,scale=0.6]  {$1$};
\draw (-1.3,-1.5) node[myblue,scale=0.6]  {$1$};

\draw (-0.4,7) node[myred,scale=0.6]  {$2$};
\draw (-6.7,2.6) node[myblue,scale=0.6]  {$1$};
\draw (6.7,2.6) node[myblue,scale=0.6]  {$1$};
\draw (-3.7,-5.7) node[mygreen,scale=0.6]  {$1$};
\draw (3.7,-5.7) node[mygreen,scale=0.6]  {$1$};

\draw (-4.8,6.2) node[myred,scale=0.6]  {$1$};
\draw (4.8,6.2) node[myred,scale=0.6]  {$1$};
\draw (-7.4,-2.4) node[mygreen,scale=0.6] {$1$};
\draw (7.4,-2.4) node[mygreen,scale=0.6]  {$1$};
\draw (0,-7.8) node[mygreen,scale=0.6]  {$2$};
\end{tikzpicture}\tag{Figure 3}
\end{equation}
Here colored and thick subgraphs correspond to three singular fibers. Each colored vertex is labeled by its multiplicity in the corresponding fiber. Moreover, there are four black vertices that are connected with each colored subgraph by precisely one edge. These vertices correspond to sections of the fibration and are labeled by letters "s". Hence $p$ has precisely three singular fibers each of type $\tilde{D}_6$ according to Kodaira classification \cite[Chapter V, Section 7]{CCS} and it has at least four distinct sections. Using  Theorem \ref{Shioda}, we deduce that
$$20=r(p)+2+3\cdot 6$$
where $r(p)$ is the torsion free rank of the Mordell-Weil group of $p$. Hence $r(p)=0$. Let $n(p)$ be the order of the Mordell-Weil group of $p$. Then $4\leq n(p)$ due to existence of four distinct sections of $p$. Again using Theorem  \ref{Shioda}, we derive that
$$|\mathrm{det}(T_{X_4})|=\frac{4^3}{n(p)^2}\leq \frac{4^3}{4^2}=4$$
and $|\mathrm{det}(T_{X_4})|$ is a divisor of $4^3$. Hence $|\mathrm{det}(T_{X_4})|=1,2,4$. Now the fact that $T_{X_4}$ is a rank two, even, positive definite integral lattice, implies that $\mathrm{det}(T_{X_4})=4$. There exists precisely one rank two, even, positive definite integral lattice with discriminant equal to $4$ and it has a basis in which intersection form has the matrix
$$\left[ \begin{array}{cc}
2 &0 \\
0 & 2 \end{array} \right]\qedhere$$
\end{proof}
\begin{remark}
Since $\sigma^*$ acts as the identity on the algebraic lattice $S_{X_4}$, this implies that it also acts as the identity on the discriminant group $D_{X_4}$. On the other hand $\sigma:X_4\ra X_4$ is a non-symplectic involution of $X_4$. Hence $\sigma^*$ induces multiplication by $-1$ on $T_{X_4}$. Thus it acts as multiplication by $-1$ on $D_{X_4}$. Therefore, we derive that $D_{X_4}$ is a direct sum of copies of $\ZZ/2\ZZ$. Now one can use Nikulin's formula in {\cite[Theorem 4.2.2]{NIK2}} (see also \cite{NIK}) to give an alternative proof of Proposition \ref{transc}.
\end{remark}

\begin{corollary}
The K3 surface $X_4$ constructed in this section is isomorphic to the unique K3 surface with maximal Picard rank and with discriminant equal to four.
\end{corollary}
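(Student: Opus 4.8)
The plan is to deduce the statement from the Shioda--Inose classification of K3 surfaces of maximal Picard rank \cite{SHIN}. For a complex K3 surface $X$ one has $\rho(X)\le h^{1,1}(X)=20$, so ``maximal Picard rank'' means exactly $\rho(X)=20$; such surfaces are the \emph{singular} K3 surfaces. By Proposition \ref{linearization} we have $\rho(X_4)=20$, and by Proposition \ref{transc} the lattice $T_{X_4}$ is even, positive definite, of rank two, with $|\det T_{X_4}|=4$; hence the discriminant $|D_{X_4}|=|\det T_{X_4}|$ of $X_4$ equals four. Thus $X_4$ is a singular K3 surface of discriminant four, and existence of such a surface is in any case witnessed by the construction of this section. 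It remains to prove uniqueness.

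First I would isolate the lattice input, which is already established in the proof of Proposition \ref{transc}: up to isometry there is exactly one even positive definite integral lattice of rank two and discriminant four, namely $\langle 2\rangle\oplus\langle 2\rangle$ (reduce a Gram matrix $\left(\begin{smallmatrix}2a&b\\ b&2c\end{smallmatrix}\right)$ to $0\le b\le a\le c$ and use $4=4ac-b^2\ge 3a^2$). I would also note that this lattice carries an orientation--reversing isometry --- the transposition of the two summands --- so that its isomorphism class does not depend on a choice of orientation on $T\otimes\RR$.

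Next I would invoke the Shioda--Inose theorem \cite{SHIN}: the map sending a singular K3 surface $X$ to its transcendental lattice $T_X$, equipped with the Hodge structure carried by the line $\mathrm{H}^{2,0}(X)\subset T_X\otimes\CC$, is injective on isomorphism classes. Let $Z$ be any singular K3 surface of discriminant four. Then $T_Z$ is even, positive definite, of rank two, with $|\det T_Z|=4$, hence isometric to $T_{X_4}$ by the previous paragraph; since $T_{X_4}$ admits an orientation--reversing isometry, such an isometry can be chosen compatibly with the orientations coming from the two Hodge structures, and is therefore a Hodge isometry. Injectivity then yields $Z\cong X_4$. Combined with the existence already noted, this shows that there is, up to isomorphism, a unique singular K3 surface of discriminant four, and $X_4$ is it.

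The main obstacle is justifying the passage from an abstract lattice isometry $T_Z\cong T_{X_4}$ to an isomorphism of surfaces $Z\cong X_4$, i.e.\ the content that is packaged inside \cite{SHIN}. One needs: (i) that for $\rho=20$ the period of $X$ is determined by the oriented lattice $T_X$, so that an orientation--compatible isometry of transcendental lattices is automatically a Hodge isometry; (ii) that such a Hodge isometry extends to an isometry of the unimodular lattice $\mathrm{H}^2(\,\cdot\,,\ZZ)$ carrying the ample cone of $Z$ to that of $X_4$; and (iii) the strong Torelli theorem for K3 surfaces, which promotes the latter to an isomorphism of surfaces. The remark following Proposition \ref{transc}, which pins down the discriminant form of $X_4$ as a sum of copies of $\ZZ/2\ZZ$, is what makes the orientation and gluing bookkeeping in (ii) harmless.
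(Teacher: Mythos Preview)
Your proof is correct and follows essentially the same approach as the paper: both combine Proposition~\ref{transc} (and Proposition~\ref{linearization}) with the Shioda--Inose classification \cite[Theorem~4]{SHIN}. The paper's proof is a one-line citation of these two inputs, whereas you unpack the content of \cite{SHIN}---the uniqueness of the lattice $\langle 2\rangle\oplus\langle 2\rangle$, the orientation issue, and the Torelli-theoretic mechanism behind the classification---but the underlying argument is the same.
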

\begin{proof}
This follows from Proposition \ref{transc} and \cite[Theorem 4]{SHIN}.
\end{proof}

\section{Elliptic fibrations on $X_4$}\label{4}

\begin{corollary}
Let $p:X_4\ra \PP^1$ be an elliptic fibration. We consider $X_4$ as a variety equipped with the action of $\left<\sigma\right>=\{1_{X_4},\sigma\}$. Then there exists an action of $\ZZ/2\ZZ$ on $\PP^1$ such that $p$ is an equivariant morphism.
\end{corollary}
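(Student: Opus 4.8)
The plan is to realise $p$ as the morphism attached to the pencil of a $\left<\sigma\right>$-linearizable line bundle, and then to let $\left<\sigma\right>$ act on that pencil. Put $\cL=p^*\cO_{\PP^1}(1)$. Since $p$ is proper and flat with connected fibres, $p_*\cO_{X_4}=\cO_{\PP^1}$, so by the projection formula the pullback $p^*$ identifies $\mathrm{H}^0(\PP^1,\cO_{\PP^1}(1))$ with $\mathrm{H}^0(X_4,\cL)$; in particular $h^0(X_4,\cL)=2$, the complete linear system $|\cL|=\PP(\mathrm{H}^0(X_4,\cL))$ is a $\PP^1$ whose members are precisely the fibres of $p$, and $t\mapsto p^{-1}(t)$ is an isomorphism $\PP^1\xrightarrow{\sim}|\cL|$ of varieties under which $p$ becomes the morphism sending a point to the unique member of $|\cL|$ through it.

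Next I would use Proposition \ref{linearization}: $\sigma$ acts trivially on $\mathrm{Pic}(X_4)$, so $\sigma^*\cL\cong\cL$, and $\cL$ carries a $\left<\sigma\right>$-linearization. Fixing one amounts to choosing an isomorphism $\lambda\colon\sigma^*\cL\ra\cL$ with $\lambda\circ\sigma^*\lambda=\mathrm{id}_{\cL}$ (here we use $\sigma^2=\mathrm{id}_{X_4}$). Then $\rho(\sigma)\colon s\mapsto\lambda\circ\sigma^*s$ is an order-two linear automorphism of the two-dimensional space $\mathrm{H}^0(X_4,\cL)$, and the identity $(\rho(\sigma)s)(x)=\lambda_x(s(\sigma x))$ shows that $\rho(\sigma)s$ vanishes at $x$ exactly when $s$ vanishes at $\sigma(x)$. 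Equivalently, writing $D_s$ for the member of $|\cL|$ cut out by a section $s$, one gets $\sigma(D_s)=D_{\rho(\sigma)s}$.

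Projectivizing, $\rho(\sigma)$ induces an involution of $\PP(\mathrm{H}^0(X_4,\cL))=|\cL|\cong\PP^1$; call the resulting automorphism of $\PP^1$ simply $\bar\sigma$. It is independent of the chosen linearization, since two linearizations of $\cL$ differ by a character of $\left<\sigma\right>$, i.e. by a sign, which is killed by projectivizing; together with $1_{\PP^1}$ it defines an action of $\ZZ/2\ZZ$ on $\PP^1$. By the previous paragraph $\sigma$ sends the member $p^{-1}(t)$ of $|\cL|$ to its $\bar\sigma$-image, and since the members of $|\cL|$ are exactly the fibres of $p$ this reads $\sigma(p^{-1}(t))=p^{-1}(\bar\sigma(t))$ for every $t\in\PP^1$, that is $p\circ\sigma=\bar\sigma\circ p$. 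Hence $p$ is equivariant for this $\ZZ/2\ZZ$-action on $\PP^1$ (which may well turn out to be trivial).

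I do not anticipate a serious obstacle: everything rests on $\sigma^*\cL\cong\cL$ (Proposition \ref{linearization}) together with the standard dictionary between automorphisms of $X_4$ along which a line bundle is equivariant and linear automorphisms of its space of global sections. The one place that needs a little care is the identification of $p$ with the morphism defined by $|\cL|$, which is precisely why the computations of $p_*\cO_{X_4}$ and of $h^0(X_4,\cL)$ are carried out first.
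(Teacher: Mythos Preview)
Your proof is correct and follows essentially the same approach as the paper: set $\cL=p^*\cO_{\PP^1}(1)$, use Proposition \ref{linearization} to linearize $\cL$ with respect to $\left<\sigma\right>$, pass to the induced linear action on $\mathrm{H}^0(X_4,\cL)\cong\mathrm{H}^0(\PP^1,\cO_{\PP^1}(1))$, and projectivize. Your version is simply more explicit about the identification of $p$ with the map to $|\cL|$ and about the independence of the projective action from the choice of linearization.
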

\begin{proof}
Let $\cL=p^*\cO_{\PP^1}(1)$. Since $p$ is a morphism with connected fibers we derive that $\mathrm{H}^0(X_4,\cL)$ is isomorphic to $\mathrm{H}^0(\PP^1,\cO_{\PP^1}(1))$. According to Proposition \ref{linearization} the line bundle $\cL$ admits a $\left<\sigma\right>$-linearization. Thus there exists a linear action of $\left<\sigma\right>$ on global sections of $\cL$ and there is a morphism $\mathrm{H}^0(X_4,\cL)\otimes_{\CC}\cO_{X_4}\ra \cL$ of sheaves with $\left<\sigma\right>$-linearizations. Since $\PP^1$ is the projectivization $\PP\left(\mathrm{H}^0(X_4,\cL)\right)$, we deduce that there exists an action of $\ZZ/2\ZZ$ on $\PP^1$ such that $p$ is equivariant.
\end{proof}
\noindent
Let $p:X_4\ra \PP^1$ be an elliptic fibration and $\tau:\PP^1\ra \PP^1$ the, possibly trivial, involution inducing the $\ZZ/2\ZZ$ action on $\PP^1$ which makes $p$ equivariant. Then there exists a fibration $q:Y_4\ra \PP^1$ such that the following diagram is commutative
\begin{center}
\begin{tikzpicture}
[description/.style={fill=white,inner sep=2pt}]
\matrix (m) [matrix of math nodes, row sep=3em, column sep=2em,text height=1.5ex, text depth=0.25ex] 
{ X_4 &  &  Y_4                        \\
  \PP^1 & &   \PP^1             \\} ;
\path[->,font=\scriptsize]  
(m-1-1) edge node[above] {$\pi $} (m-1-3)
(m-2-1) edge node[below]{$r$} (m-2-3)
(m-1-3) edge node[right] {$q$} (m-2-3)
(m-1-1) edge node[left] {$ p$} (m-2-1);
\end{tikzpicture}
\end{center} 
where $r:\PP^1\ra \PP^1$ is the quotient morphism with respect to the action of $\tau$ on $\PP^1$. We will call $q$ the fibration induced by $p$. Clearly $q$ is either an elliptic fibration or a fibration with general fiber being smooth, rational curve.

\begin{proposition}\label{ellipticX_4}
Let $p  :X_4\ra \PP^1$ be an elliptic fibration and $q:Y_4\ra \PP^1$ the induced fibration. Then the following hold.
\begin{enumerate}[label=\emph{\textbf{(\arabic*)}}, leftmargin=*]
\item $\tau=1_{\PP^1}$ if and only if $q$ is a fibration with general fiber being smooth rational curve. In this case every section of $p$ is a ramification curve of $\pi$.
\item $\tau\neq 1_{\PP^1}$ if and only if $q$ is an elliptic fibration. In this case there are at most two reducible fibers of $p$ and they are preimages of fixed points of $\tau$.
\end{enumerate}
\end{proposition}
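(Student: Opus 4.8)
The plan rests on one structural fact about the covering involution $\sigma$: since $\sigma^{*}=1_{\mathrm{Pic}(X_4)}$ by Proposition \ref{linearization} and a $(-2)$-curve is the unique effective divisor in its linear class (if $D\sim C$ with $C$ a $(-2)$-curve, then $D\cdot C<0$ forces $C$ to be a component of $D$, and $D-C\sim 0$ is effective, hence zero), $\sigma$ preserves every $(-2)$-curve on $X_4$ setwise. In particular $\sigma$ fixes, setwise, every section of $p$ (a section of an elliptic fibration on a K3 surface is a $(-2)$-curve, as $K_{X_4}=0$) and every irreducible component of every reducible fibre of $p$ (these are smooth rational $(-2)$-curves).

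I would first treat general fibres. For general $t\in\PP^1$ one has $\pi^{-1}(q^{-1}(t))=p^{-1}(r^{-1}(t))$. If $\tau=1_{\PP^1}$, this is a single fibre $X_t$ of $p$ and $\pi$ restricts to a degree-two morphism $X_t\to Y_t$ from an elliptic curve, so Riemann--Hurwitz gives $Y_t\cong\PP^1$ (branched over the four points of $Y_t\cap B_{\pi}$) or $Y_t$ elliptic with $X_t\to Y_t$ \'etale. If $\tau\neq1_{\PP^1}$, then $r$ is a double cover, $r^{-1}(t)=\{t_1,t_2\}$ for general $t$, and $\pi^{-1}(q^{-1}(t))=X_{t_1}\sqcup X_{t_2}$; each $X_{t_i}$ maps onto the irreducible curve $Y_t$ with the two degrees summing to $2$, so each has degree one, whence $Y_t$ is birational to an elliptic curve and $q$ is elliptic. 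As $q$ is either elliptic or has rational general fibre, the two equivalences will follow once the case ``$\tau=1_{\PP^1}$ with $q$ elliptic'' is excluded; this is the main obstacle.

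To exclude it, assume $\tau=1_{\PP^1}$ and $q$ elliptic, so $p=q\circ\pi$, and pick a section $\Sigma$ of $p$ (Proposition \ref{elliptics}); then $\sigma(\Sigma)=\Sigma$. For general $t$ the map $X_t\to Y_t$ is a degree-two cover of an elliptic curve by an elliptic curve, hence \'etale, so $X_t$ is disjoint from the ramification locus $R_{\pi}=\coprod_{i<j}L_{ij}=\mathrm{Fix}(\sigma)$, and $\pi^{*}[Y_t]=[X_t]$ since $\pi$ is \'etale over $Y_t$ and $\pi^{-1}(Y_t)=p^{-1}(t)$ is connected. If $\Sigma\subseteq\mathrm{Fix}(\sigma)$, then $\Sigma\cap X_t=\emptyset$, contradicting $\Sigma\cdot X_t=1$. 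Otherwise $\pi|_{\Sigma}$ is a double cover of $\pi(\Sigma)$, so $\pi_{*}[\Sigma]=2[\pi(\Sigma)]$, and the projection formula gives $2\,(\pi(\Sigma)\cdot Y_t)=\Sigma\cdot X_t=1$, which is impossible. Hence $\tau=1_{\PP^1}$ forces $q$ to have rational general fibre, settling both equivalences.

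It remains to prove the two ``in this case'' statements. When $\tau=1_{\PP^1}$, let $\Sigma$ be any section of $p$; again $\sigma(\Sigma)=\Sigma$. If $\Sigma\not\subseteq\mathrm{Fix}(\sigma)$, then $\pi_{*}[\Sigma]=2[\pi(\Sigma)]$ and $\pi^{*}[Y_t]=[X_t]$ (the ramification of $\pi$ along the general fibre $Y_t\cong\PP^1$ is transverse, so the pullback is reduced), whence $2\,(\pi(\Sigma)\cdot Y_t)=\Sigma\cdot X_t=1$, a contradiction; therefore $\Sigma\subseteq\mathrm{Fix}(\sigma)=\coprod_{i<j}L_{ij}$, and being irreducible it equals some $L_{ij}$, i.e.\ a ramification curve of $\pi$. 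When $\tau\neq1_{\PP^1}$ and $q$ is elliptic, any reducible fibre $X_s$ of $p$ has all its components $(-2)$-curves, hence all preserved by $\sigma$, so $\sigma(X_s)=X_s$; since $\sigma$ covers $\tau$ on the base, $\tau(s)=s$, and a nontrivial involution of $\PP^1$ has exactly two fixed points, so $p$ has at most two reducible fibres, each lying over a fixed point of $\tau$.
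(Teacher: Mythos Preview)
Your proof is correct. For part \textbf{(2)} it is essentially identical to the paper's: both use the degree formula on general fibres to see that $Y_t$ is elliptic, and both use that components of reducible fibres are $(-2)$-curves and hence $\sigma$-stable to force such fibres over the two fixed points of $\tau$.

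For part \textbf{(1)} your route differs from the paper's. The paper first proves directly that every section $s(\PP^1)$ is \emph{pointwise} $\sigma$-fixed: since $\sigma$ preserves both the fibre $F_u$ and the section $s(\PP^1)$ setwise, it must fix their single intersection point $v=s(u)$, and this holds for every $u$. From this it deduces in one stroke that the fibrewise map $F_u\to C_u$ is ramified, hence $g(C_u)<1$ and $q$ has rational general fibre. You instead do a Riemann--Hurwitz dichotomy (rational vs.\ \'etale elliptic) and then eliminate the \'etale branch by a projection-formula parity argument, reusing the same parity trick to show sections lie in the ramification locus. The paper's argument is slightly more economical --- one pointwise computation handles both the section claim and the nature of $q$ --- while yours makes the obstruction numerical and avoids tracking individual points. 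A small cosmetic point: your justification ``the ramification of $\pi$ along $Y_t$ is transverse, so the pullback is reduced'' is not quite the reason; what you actually use is that $\pi^{*}Y_t=p^{-1}(t)=X_t$ as schemes (because $p=q\circ\pi$ when $r=1_{\PP^1}$) and $X_t$ is a smooth fibre, hence reduced.
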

\begin{proof}
According to Proposition \ref{transc} and Proposition \ref{elliptics} we deduce that every elliptic fibration on $X_4$ admits a section.\\
Suppose that the action of $\ZZ/2\ZZ$ on $\PP^1$ is trivial. In particular, we have 
$$p\cdot \sigma=q\cdot \pi\cdot \sigma=q\cdot \pi=p$$
We derive that $p(\sigma(F_{u}))=p(F_{u})$ where $u\in \PP^1$ is a point and $F_u=p^{-1}(u)$ is a fiber. This implies that $\sigma(F_u)=F_{u}$. Consider now a section $s:\PP^1\ra X_4$ of $p$. If $v=s(u)$, then $\{v\}=F_{u}\cap s(\PP^1)$. Hence $\sigma(v)\in \sigma(F_{u})=F_{u}$. On the other hand $s(\PP^1)$ is a smooth, rational curve on a K3 hence a $(-2)$-curve. Since $\sigma^*=1_{\mathrm{Pic}(X_4)}$, we derive that $\sigma(s(\PP^1))=s(\PP^1)$. According to $v\in s(\PP^1)$, we deduce that $\sigma(v)\in \sigma(s(\PP^1))=s(\PP^1)$. Thus $\sigma(v)\in F_{u}\cap s(\PP^1)=\{v\}$ and hence $v$ is a fixed point of $\sigma$. Therefore, every point of $s(\PP^1)$ is a fixed point of the action of $\sigma$. This implies that $s(\PP^1)$ is a ramification curve of $\pi$. Next note that $F_{u}=p^{-1}(u)$ and $C_{u}=q^{-1}(r(u))=q^{-1}(u)$ are smooth curves if one chooses $u\in \PP^1$ to be sufficiently general. Hence the morphism $F_{u}\ra C_{u}$ induced by $\pi$ is a ramified morphism of smooth curves. Thus we have $g(C_{u})<g(F_{u})=1$. Hence $C_{u}$ is a smooth, rational curve. This implies that $q$ is a fibration with general fiber being smooth, rational curve.\\
Suppose now that the action of $\ZZ/2\ZZ$ on $\PP^1$ is nontrivial. Then it is given by some nontrivial involution $\tau$ of $\PP^1$. Pick $u\in \PP^1$ such that $F_u=p^{-1}(u)$ is smooth and $\tau(u)\neq u$. We deduce that $F_{\tau(u)}=\sigma(F_u)$ is smooth. Moreover, we have 
$$\pi(F_u)=C_{r(u)}=\pi(F_{\tau(u)})$$
where $C_{r(u)}=q^{-1}(r(u))$ is some curve on $Y_4$. It is clear that if one chooses $u$ to be sufficiently general, then one may assume that $C_{r(u)}$ is smooth. Let $e_u$, $e_{\tau(u)}$ and $f_u$, $f_{\tau(u)}$ be ramification indexes and inertia indexes of $\pi$ at generic points of $F_{u}$ and $F_{\tau(u)}$. Then we have formula \cite[7.4.2, Formula 4.8]{LIU}
$$f_ue_u+f_{\tau(u)}e_{\tau(u)}=2$$
Hence $e_u=e_{\tau(u)}=f_u=f_{\tau(u)}=1$ and morphisms $F_u\ra C_{r(u)}$ and $F_{\tau(u)}\ra C_{r(u)}$ induced by $\pi$ are isomorphisms. This implies that $C_{r(u)}=q^{-1}(r(u))$ is a smooth elliptic curve. Hence $q$ is an elliptic fibration. Finally note that by Kodaira classification \cite[Chapter V, Section 7]{CCS} reducible fibers of $p$ correspond to unions of $(-2)$-curves on $X_4$. In particular, reducible fibers of $p$ are invariant under the action of $\sigma$. Hence every reducible fiber of $p$ is contracted by $p$ to a fixed point of $\tau$. According to the fact that $\tau$ is a nontrivial involution of $\PP^1$, it has two fixed points. Thus there are at most two reducible fibers of $p$.
\end{proof}
\noindent
Both types of elliptic fibrations described in the previous proposition are realized on $X_4$. For this observe that the elliptic fibration described in Proposition \ref{transc} is of the first type. Now consider divisors $D_1$ and $D_2$ corresponding to subgraphs of the configuration described in Proposition \ref{petersencor} and depicted by colored and thick parts in the \ref{fig4}.
\begin{equation}\label{fig4}
\begin{tikzpicture}[scale=0.37, every node/.style={scale=0.8}]
\draw (2.5,0.65) node[scale=0.5]  {$(23)(14)$};
\draw (1.4,-1.8) node[scale=0.5]  {$(13)(04)$};
\draw (-6.3,2.6) node[scale=0.5]  {$ (34)(02)$};
\draw (5.4,6) node[scale=0.5]  {$ (01)(24) $};
\draw (0,-7.9) node[scale=0.5]  {$ (12)(03) $};
\draw [black, very thick] (90:9) to (54:7.4);
\draw [black,very thick] (234:9) to (270:7.4);
\draw [black,very thick] (306:9) to (270:7.4);
\draw [black,very thick] (18:9) to (54:7.4);
\fill [black] (54 :7.4) circle (0.2cm);
\fill [black] (270 :7.4) circle (0.2cm);
\fill [black] (162 :7) circle (0.2cm);
\draw [black, very thick] (162:9) to (162:7);
\draw [black, very thick] (162:7) to (162:5);
\draw [black, very thick] (90:5) to (18:1.6);
\draw [black, very thick] (234:5) to (306:1.6);
\draw [black, very thick] (18:5) to (306:1.6);
\draw [black, very thick] (306:5) to (18:1.6);
\fill [black] (306 :1.6) circle (0.2cm);
\fill [black] (18 :1.6) circle (0.2cm);

\draw (8.05,-2.4) node[myblue, scale=0.5]  {$ (24)(03) $};
\draw (-4.7,2) node[myblue, scale=0.5]  {$(02)$};
\draw (4.7,2) node[myblue, scale=0.5]   {$(13)$};
\draw (9.2,3) node[myblue, scale=0.5]   {$(24)$};
\draw (3.6,-4)node[myblue, scale=0.5]   {$(14)$};
\draw (5.8,-7.6)node[myblue, scale=0.5]   {$(03)$};
\draw (0,2.1) node[myblue, scale=0.5]  {$(02)(13)$};
\draw (-1.4,-1.8) node[myblue, scale=0.5]  {$(02)(14)$};
\draw (6.3,2.6) node[myblue, scale=0.5]  {$ (24)(13) $};
\draw (3.1,-5.8) node[myblue, scale=0.5]  {$ (14)(03) $};
\fill [myblue] (306 :9) circle (0.2cm);
\fill [myblue] (18 :9) circle (0.2cm);
\draw [myblue,line width=1mm] (306:9) to (342:7.4);
\draw [myblue,line width=1mm] (18:9) to (342:7.4);
\fill [myblue] (342 :7.4) circle (0.2cm);
\fill [myblue] (306 :7) circle (0.2cm);
\fill [myblue] (18 :7) circle (0.2cm);
\draw [myblue,line width=1mm] (306:9) to (306:7);
\draw [myblue,line width=1mm] (18:9) to (18:7);
\draw [myblue, line width=1mm] (306:7) to (306:5);
\draw [myblue, line width=1mm] (18:7) to (18:5);
\fill [myblue] (162 :5) circle (0.2cm);
\fill [myblue] (306 :5) circle (0.2cm);
\fill [myblue] (18 :5) circle (0.2cm);
\draw [myblue,line width=1mm] (18:5) to (90:1.6);
\draw [myblue,line width=1mm] (162:5) to (90:1.6);
\draw [myblue,line width=1mm] (162:5) to (234:1.6);
\draw [myblue,line width=1mm] (306:5) to (234:1.6);
\fill [myblue] (90 :1.6) circle (0.2cm);
\fill [myblue] (234 :1.6) circle (0.2cm);

\draw (0,9.5) node[myred, scale=0.5]   {$(01)$};
\draw (-9.2,3) node[myred, scale=0.5]   {$(34)$};
\draw (-0.6,4.9)node[myred, scale=0.5]   {$(23)$};
\draw (-3.6,-4)node[myred, scale=0.5]   {$(04)$};
\draw (-5.8,-7.6)node[myred, scale=0.5]   {$(12)$};
\draw (-2.5,0.65) node[myred, scale=0.5]  {$(23)(04)$};
\draw (-1,7) node[myred, scale=0.5]  {$ (01)(23)$};
\draw (-3.1,-5.8) node[myred, scale=0.5]  {$ (04)(12)$};
\draw (-5.4,6) node[myred, scale=0.5]  {$ (01)(34) $};
\draw (-8,-2.4) node[myred, scale=0.5]  {$ (34)(12) $};
\fill [myred] (90 :9) circle (0.2cm);
\fill [myred] (162 :9) circle (0.2cm);
\fill [myred] (234 :9) circle (0.2cm);
\draw [myred, line width=1mm] (90:9) to (126:7.4);
\draw [myred,line width=1mm] (162:9) to (126:7.4);
\draw [myred, line width=1mm] (162:9) to (198:7.4);
\draw [myred, line width=1mm] (234:9) to (198:7.4);
\fill [myred] (126 :7.4) circle (0.2cm);
\fill [myred] (198 :7.4) circle (0.2cm);
\fill [myred] (90 :7) circle (0.2cm);
\fill [myred] (234 :7) circle (0.2cm);
\draw [myred, line width=1mm] (90:9) to (90:7);
\draw [myred, line width=1mm] (234:9) to (234:7);
\draw [myred, line width=1mm] (90:7) to (90:5);
\draw [myred, line width=1mm] (234:7) to (234:5);
\fill [myred] (90 :5) circle (0.2cm);
\fill [myred] (234 :5) circle (0.2cm);
\draw [myred, line width=1mm] (90:5) to (162:1.6);
\draw [myred, line width=1mm] (234:5) to (162:1.6);
\fill [myred] (162 :1.6) circle (0.2cm);
\end{tikzpicture}\tag{Figure 4}
\end{equation}
We have that $D_1.D_2=0$ and both these divisors are of Kodaira \cite[Chapter V, Section 7]{CCS} type $\tilde{A}_9$. According to Corollary \ref{ellipticK3} there exists an elliptic fibration $p:X_4\ra \PP^1$ such that two of its fibers are precisely divisors $D_1$ and $D_2$.  Since $D_1\cup D_2$ contains all ramification curves of $\pi$, we deduce that this elliptic fibration is of type \textbf{(2)} with respect to Proposition \ref{ellipticX_4}. 

\begin{corollary}
$Y_4$ is an elliptic rational surface.
\end{corollary}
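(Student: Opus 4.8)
The plan is to descend to $Y_4$ the elliptic fibration on $X_4$ that was singled out just above: the fibration $p:X_4\ra\PP^1$ two of whose fibres are the divisors $D_1$ and $D_2$ of Kodaira type $\tilde A_9$. Its existence is supplied by Corollary \ref{ellipticK3}: the divisor $D_1$ is reduced and carries the scheme structure of a fibre of type $\tilde A_9$, hence is a singular fibre of some elliptic fibration $p:X_4\ra\PP^1$; and $D_2$, being connected, disjoint from $D_1$, and again of type $\tilde A_9$, is a complete vertical divisor of square zero for $p$, hence a fibre of $p$ as well. (This is the construction already recalled in the paragraph preceding the statement.)

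Next I would verify that $p$ is of type \textbf{(2)} in Proposition \ref{ellipticX_4}, i.e. that the involution $\tau$ of the base induced by the $\ZZ/2\ZZ$-action is nontrivial. Since $\rho(X_4)=20$ by Proposition \ref{linearization} and $X_4$ has discriminant four by Proposition \ref{transc}, Proposition \ref{elliptics} applies and $p$ admits a section $s:\PP^1\ra X_4$; its image $s(\PP^1)$ meets the fibre class with multiplicity one, so it is horizontal for $p$ and in particular is not a component of any fibre, hence not a component of $D_1\cup D_2$. Were $p$ of type \textbf{(1)}, Proposition \ref{ellipticX_4}\textbf{(1)} would force $s(\PP^1)$ to be a ramification curve of $\pi$, that is, one of the curves $L_{ij}$; but, as observed above, $D_1\cup D_2$ contains every ramification curve $L_{ij}$ of $\pi$, contradicting the fact that $s(\PP^1)$ is not a component of $D_1\cup D_2$. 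Therefore $p$ is of type \textbf{(2)} and $\tau\neq 1_{\PP^1}$.

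Finally, I would apply Proposition \ref{ellipticX_4}\textbf{(2)} to $p$: since $\tau\neq 1_{\PP^1}$, the induced fibration $q:Y_4\ra\PP^1$ (defined by the commutative square relating $p$, $\pi$, $q$ and the quotient $r:\PP^1\ra\PP^1$) is an elliptic fibration. On the other hand, $Y_4$ was constructed in Section \ref{3} as an iterated blow-up of $\PP^2$ — first $S_5=\mathrm{Bl}_P(\PP^2)$, then $m:Y_4\ra S_5$ blowing up the intersection points of the $(-1)$-curves — so it is a smooth rational surface. Hence $Y_4$ is a rational surface admitting an elliptic fibration, i.e. an elliptic rational surface. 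The only step that requires argument is the verification that $p$ is of type \textbf{(2)}, which reduces to the incidence data of the $L_{ij}$ recorded in Proposition \ref{petersencor} together with the existence of a section; everything else follows by direct citation of the results established above.
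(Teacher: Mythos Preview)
Your proof is correct and follows the same approach as the paper: you use the elliptic fibration $p$ with the two $\tilde A_9$ fibres $D_1,D_2$, verify it is of type \textbf{(2)} via the observation that $D_1\cup D_2$ contains all ramification curves $L_{ij}$, and then apply Proposition \ref{ellipticX_4}\textbf{(2)} to obtain an elliptic fibration on the rational surface $Y_4$. Your argument for type \textbf{(2)} (sections exist, and a section cannot be a fibre component, hence cannot be a ramification curve) simply makes explicit the deduction the paper states in one line.
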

\begin{proof}
According to Proposition \ref{ellipticX_4} fibration of type \textbf{(2)} induces an elliptic fibration on $Y_4$. Since by previous discussion fibrations of type \textbf{(2)} exist, we derive the assertion.
\end{proof}

\section{Automorphism group of $X_4$ and Cremona group}\label{5}
\noindent
In order to describe automorphism group of $X_4$ in geometric terms we need to prove first certain result on negative curves on $Y_4$ and $X_4$. Recall that $\pi:X_4\ra Y_4$ denotes the anticanonical covering of degree two.
\begin{proposition}\label{classes}
Let $F$ be a curve on $Y_4$ such that $F^2<0$. Then $F$ is a smooth rational curve and $F^2\in \{-1,-4\}$. Moreover, these curves fit into the following classes.
\begin{itemize}[leftmargin=2em]
\item[\emph{\textbf{($\bd{b}$)}}] $F^2=-4$ and $F$ is a branch curve of $\pi$ i.e $F=F_{ij}$ for some $0\leq i<j\leq 4$.
\item[\emph{\textbf{($\bd{f_1}$)}}]  $F^2=-1$ and $F$ intersects exactly one of the curves in the set $\{F_{ij}\}_{0\leq i<j\leq 4}$ at two distinct points and transversally
\begin{center}
\begin{tikzpicture}[scale=0.2]
\node[scale=0.9] at (2.7,1.7) {$F$};
\node[scale=0.9] at (3,-1) {$F_{ij}$};
\draw [ultra thick,black] (0,0) to[out=0,in=-85] (2,2);
\draw [ultra thick,black] (0,1) to[out=0,in=85] (2,-1);
\draw [ultra thick,black] (0,0) to[out=180,in=-85] (-2,2);
\draw [ultra thick,black] (0,1) to[out=180,in=85] (-2,-1);
\end{tikzpicture}
\end{center}
\item[\emph{\textbf{($\bd{f_2}$)}}] $F^2=-1$ and $F$ intersects exactly two curves in the set $\{F_{ij}\}_{0\leq i<j\leq 4}$ each at one point and transversally
\begin{center}
\begin{tikzpicture}[scale=0.2]
\node[scale=0.9] at (0,-0.8) {$F$};
\node[scale=0.9] at (-3,1.5) {$F_{ij}$};
\node[scale=0.9] at (3.1,1.5) {$F_{kl}$};
\draw [black,ultra thick] (-3,0) to (3,0);
\draw [black, ultra thick] (-2,-2) to (-2,2);
\draw [black, ultra thick] (2,-2) to (2,2);
\end{tikzpicture}
\end{center}
\end{itemize}
Let $L$ be a curve on $X_4$ such that, $L^2<0$. Then $L$ is a smooth rational curve and $L^2=-2$.\\ Moreover, these curves fit into the following classes.
\begin{itemize}[leftmargin=2em]
\item[\emph{\textbf{($\bd{r}$)}}] $L$ is a ramification curve of $\pi$ i.e $L=L_{ij}$ for some $0\leq i<j\leq 4$.
\item[\emph{\textbf{($\bd{l_1}$)}}] $L$ intersects exactly one of the curves in the set $\{L_{ij}\}_{0\leq i<j\leq 4}$ at two distinct points and transversally
\begin{center}
\begin{tikzpicture}[scale=0.2]
\node[scale=0.9] at (2.6,1.7) {$L$};
\node[scale=0.9] at (2.9,-1) {$L_{ij}$};
\draw [ultra thick,black] (0,0) to[out=0,in=-85] (2,2);
\draw [ultra thick,black] (0,1) to[out=0,in=85] (2,-1);
\draw [ultra thick,black] (0,0) to[out=180,in=-85] (-2,2);
\draw [ultra thick,black] (0,1) to[out=180,in=85] (-2,-1);
\end{tikzpicture}
\end{center}
\item[\emph{\textbf{($\bd{l_2}$)}}] $L$ intersects exactly two curves in the set $\{L_{ij}\}_{0\leq i<j\leq 4}$ each at one point and transversally
\begin{center}
\begin{tikzpicture}[scale=0.2]
\node[scale=0.9] at (0,-1) {$L$};
\node[scale=0.9] at (-3.1,1.5) {$L_{ij}$};
\node[scale=0.9] at (3.1,1.5) {$L_{kl}$};
\draw [black,ultra thick] (-3,0) to (3,0);
\draw [black, ultra thick] (-2,-2) to (-2,2);
\draw [black, ultra thick] (2,-2) to (2,2);
\end{tikzpicture}
\end{center}
\end{itemize}
We have bijections
$$\bd{b}\ni F\mapsto L=\frac{1}{2}\pi^*F\in \bd{r}$$
$$\bd{f_1}\ni F\mapsto L=\pi^*F\in \bd{l_1}$$
$$\bd{f_2}\ni F\mapsto L=\pi^*F\in \bd{l_2}$$
\end{proposition}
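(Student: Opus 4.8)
The plan is to move back and forth across the double cover $\pi\colon X_4\to Y_4$, using Lemma~\ref{lemma} as the engine on the $Y_4$ side and the known description $\mathrm{Fix}(\sigma)=\coprod_{0\le i<j\le 4}L_{ij}$ of the fixed locus on the $X_4$ side. I would start with curves on $X_4$: since $X_4$ is a K3 surface, adjunction reads $2p_a(L)-2=L^2$, so an irreducible curve with $L^2<0$ has $L^2=-2$ and is isomorphic to $\PP^1$. Such an $L$ is $\sigma$-invariant, because $\sigma^*$ acts trivially on $\mathrm{Pic}(X_4)$ (Proposition~\ref{linearization}), hence $\sigma(L)\sim L$, and the $(-2)$-curve in its divisor class is unique (two distinct irreducible curves meet non-negatively, whereas $L\cdot L=-2<0$). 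If $L\subseteq\mathrm{Fix}(\sigma)$, then $L$ is a connected component of the fixed locus, i.e.\ $L=L_{ij}$, which is class~$(\bd{r})$. Otherwise $\sigma|_L$ is a nontrivial involution of $\PP^1$, and I would pass to a local analysis.

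A nontrivial involution of $\PP^1$ has exactly two fixed points; each fixed point of $\sigma|_L$ is a fixed point of $\sigma$ on $X_4$ and therefore lies on $\mathrm{Fix}(\sigma)=\coprod L_{ij}$, and since the $L_{ij}$ are pairwise disjoint each of these two points lies on a single $L_{ij}$. At such a point I would pick local analytic coordinates $(x,y)$ in which $\sigma$ is $(x,y)\mapsto(x,-y)$ and $L_{ij}$ is $\{y=0\}$; as $L$ is smooth, $\sigma$-invariant and not contained in $\{y=0\}$, its tangent line there is forced to be the $(-1)$-eigenline $\{x=0\}$, so $L$ meets $L_{ij}$ transversally. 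Hence $\sum_{i<j}L\cdot L_{ij}=2$, realised either as $L\cdot L_{ij}=2$ for a single pair $\{i,j\}$ (class~$(\bd{l_1})$) or as $L\cdot L_{ij}=L\cdot L_{kl}=1$ for two disjoint pairs (class~$(\bd{l_2})$), and in both cases $L$ is disjoint from every other ramification curve. This exhausts the negative curves on $X_4$.

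To descend to $Y_4$, take $L$ as above with $L$ not of class~$(\bd{r})$ and put $F=\pi(L)$; then $\pi^*F=L$, the curve $F$ is not a branch curve of $\pi$, and $(\pi^*F)^2=2F^2$ gives $F^2=-1$. Because $\pi$ restricts to an isomorphism $L_{ij}\to F_{ij}$ and locally near a ramification point is the map $(x,y)\mapsto(x,y^2)$, the incidence pattern of $L$ with the $L_{ij}$ transports to that of $F$ with the $F_{ij}$, so $F$ is of class~$(\bd{f_1})$ or $(\bd{f_2})$; moreover the projection formula gives $\sum_{i<j}F\cdot F_{ij}=\sum_{i<j}L\cdot L_{ij}=2$, which together with $\sum_{i<j}F_{ij}\sim-2K_{Y_4}$ (Proposition~\ref{petersen}) yields $F\cdot K_{Y_4}=-1$, and then adjunction on $Y_4$ gives $p_a(F)=0$, so $F$ is a smooth rational curve. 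Conversely, for an arbitrary curve $F$ on $Y_4$ with $F^2<0$, Lemma~\ref{lemma} leaves two options: $\pi^*F=2L$, which by that lemma forces $F$ to be a branch curve, hence $F=F_{ij}$ with $F^2=-4$ (class~$(\bd{b})$); or $\pi^*F=L$ is a $(-2)$-curve, which is one of the curves just analysed. This proves that every negative curve on $Y_4$ is smooth rational with self-intersection in $\{-1,-4\}$ and lies in exactly one of $(\bd{b}),(\bd{f_1}),(\bd{f_2})$. Finally, the three displayed assignments $F\mapsto\tfrac12\pi^*F$ and $F\mapsto\pi^*F$ are well defined by the computations above, injective because $\pi^*\colon\mathrm{Cl}(Y_4)\to\mathrm{Cl}(X_4)$ is injective (Proposition~\ref{linearization}), and surjective because the classification of negative curves on $X_4$ writes every curve of class~$(\bd{r})$, $(\bd{l_1})$, $(\bd{l_2})$ as $\tfrac12\pi^*F$, respectively $\pi^*F$, for a curve $F$ whose class is read off from its intersection numbers with the $F_{ij}$.

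I expect the local transversality statement at the fixed points of $\sigma|_L$ to be the main obstacle: one genuinely needs $\sum_{i<j}L\cdot L_{ij}$ to equal $2$ \emph{exactly}, while a bare Riemann--Hurwitz count for the double cover $L\to\PP^1$ only bounds this number below by $2$; ruling out higher-order tangency is precisely what produces the two clean incidence patterns $(\bd{l_1})$, $(\bd{l_2})$ and, after descent, forces $p_a(F)=0$ and the patterns $(\bd{f_1})$, $(\bd{f_2})$. The remaining ingredients---the projection formula, adjunction on both surfaces, and Lemma~\ref{lemma}---are routine bookkeeping.
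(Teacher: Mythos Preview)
Your argument is correct, but it runs in the opposite direction from the paper's. The paper begins on $Y_4$: for a negative curve $F$ that is not a branch component, it first observes $F\cdot B_\pi=-2F\cdot K_{Y_4}\ge 0$, feeds this into adjunction on $Y_4$ to force $p_a(F)=0$ and $F\cdot B_\pi=2$, and then argues that since $L=\pi^*F$ is smooth, the branch divisor $B_\pi|_F$ of the double cover $L\to F$ must be reduced---hence two distinct transversal points. Only afterwards does the paper lift the classification to $X_4$. You instead start on $X_4$, exploit that $\sigma|_L$ is a nontrivial involution of $\PP^1$ with exactly two fixed points, and extract transversality from a local eigenspace argument in analytic coordinates; the $Y_4$ picture is then obtained by descent.

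The two routes are equivalent in substance: your fixed-point count plus local analysis is the $X_4$-side translation of the paper's ``smooth double cover $\Rightarrow$ reduced branch divisor'' step. The paper's version is a bit quicker, since smoothness of $L$ immediately gives reducedness of $B_\pi|_F$ without any coordinate computation. Your version, on the other hand, makes the geometry of $\sigma$ more visible and gives the bijections between the classes on $X_4$ and $Y_4$ essentially for free. Both are complete proofs.
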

\begin{proof}
According to Lemma \ref{lemma} we have two cases:
\begin{enumerate}[label=\textbf{(\arabic*)}, leftmargin=*]
\item $\pi^*F=2L$ where $L$ is a curve on $X_4$ and the map $L\ra F$ induced by $\pi$ is birational.
\item $\pi^*F=L$ where $L$ is a curve on $X_4$ and the map $L\ra F$ induced by $\pi$ is of degree two.
\end{enumerate}
In case \textbf{(1)} we have $F^2=2L^2=-4$ and $F$ is a component of branch divisor $B_{\pi}$. Thus $F=F_{ij}$ for some $0\leq i<j\leq 4$. This gives the class \textbf{b} and the corresponding class \textbf{r}.\\
Let us study \textbf{(2)}. In this case we derive $F^2=\frac{1}{2}L^2=-1$. Since $\pi^*F=L$, the ramification index of a local parameter at generic point of $F$ is equal to $1$. This means that $F$ is not contained in branch locus of $\pi$ and hence $0\leq F.B_{\pi}=-2F.K_{Y_4}$. Hence $F.K_{Y_4}\leq 0$. By formula on arithmetic genus we have
$$2p_a(F)-2= F^2+F.K_{Y_4}\leq -1$$
Thus $p_a(F)=0$ and $F.K_{Y_4}=-1$. This implies that $F.B_{\pi}=2$. Moreover, morphism $L\ra F$  induced by $\pi$ is a degree two cyclic covering of $F$ branched along divisor ${B_{\pi}}_{\mid F}$. Since $L$ is smooth, we derive that ${B_{\pi}}_{\mid F}$ must be smooth. Hence ${B_{\pi}}_{\mid F}$ is a union of two points each with multiplicity one. This gives classes $\bd{f_1}$ and $\bd{f_2}$. Note that every $(-2)$-curve $L$ on $X_4$ is either the pullback or half the pullback of a negative curve on $Y_4$. This gives classes $\bd{l_1}$ and $\bd{l_2}$ corresponding to $\bd{f_1}$ and $\bd{f_2}$.
\end{proof}

\begin{remark}
All these classes are nonempty. For this note that class $\bd{r}$ is clearly nonempty. Moreover, $L_{(ij)(kl)}$ is a curve of type $\bd{l_2}$ that intersects $L_{ij}$ and $L_{kl}$ for disjoint $\{i,j\}$, $\{k,l\}\subseteq \{0,1,2,3,4\}$. We now prove that for $0\leq i<j\leq 4$ there exists a curve $L\in \bd{l_1}$ that intersects $L_{ij}$. Without loss of generality we may assume that $i=3$ and $j=4$. Consider the extended Petersen graph with some extra decorations as shown in the \ref{fig5}.
\begin{equation}\label{fig5}
\begin{tikzpicture}[scale=0.37, every node/.style={scale=0.8}]

\draw (2.5,0.65) node[scale=0.5]  {$(23)(14)$};
\draw (1.4,-1.8) node[scale=0.5]  {$(13)(04)$};
\draw (-6.3,2.6) node[scale=0.5]  {$ (34)(02)$};
\draw (-5.4,6) node[scale=0.5]  {$ (01)(34) $};
\draw (-8,-2.4) node[scale=0.5]  {$ (34)(12) $};
\draw (8.05,-2.4) node[scale=0.5]  {$ (24)(03) $};

\draw [black, very thick] (90:9) to (126:7.4);
\draw [black,very thick] (162:9) to (126:7.4);
\draw [black,very thick] (162:9) to (198:7.4);
\draw [black,very thick] (234:9) to (198:7.4);

\fill [black] (126 :7.4) circle (0.2cm);
\fill [black] (198 :7.4) circle (0.2cm);
\fill [black] (162 :7) circle (0.2cm);
\fill [black] (342 :7.4) circle (0.2cm);

\draw [black,very thick] (162:9) to (162:7);
\draw [black,very thick] (162:7) to (162:5);

\draw [black,very thick] (90:5) to (18:1.6);
\draw [black,very thick] (234:5) to (306:1.6);
\draw [black,very thick] (18:5) to (306:1.6);
\draw [black,very thick] (306:5) to (18:1.6);
\draw [black,very thick] (306:9) to (342:7.4);
\draw [black,very thick] (18:9) to (342:7.4);

\fill [black] (306 :1.6) circle (0.2cm);
\fill [black] (18 :1.6) circle (0.2cm);

\draw (-9.1,3) node[scale=0.5, myblue]   {$(34)$};
\fill [myblue] (162 :9) circle (0.2cm);
\fill [myblue] (162:11.1) circle (0.2cm);
\draw (-11,3.8) node[myblue]   {$L$};
\draw[myblue, very thick, dotted] (-9.6,3) circle (1.05 cm);

\draw (-4.7,2) node[scale=0.5, myred]  {$(02)$};
\draw (4.7,2) node[scale=0.5, myred]   {$(13)$};
\draw (0,9.5) node[scale=0.5, myred]   {$(01)$};
\draw (9.2,3) node[scale=0.5, myred]   {$(24)$};
\draw (-0.6,4.9)node[scale=0.5, myred]   {$(23)$};
\draw (3.6,-4)node[scale=0.5, myred]   {$(14)$};
\draw (-3.6,-4)node[scale=0.5, myred]   {$(04)$};
\draw (-5.8,-7.6)node[scale=0.5, myred]   {$(12)$};
\draw (5.8,-7.6)node[scale=0.5, myred]   {$(03)$};

\draw (0,2.1) node[scale=0.5, myred]  {$(02)(13)$};
\draw (-2.5,0.65) node[scale=0.5, myred]  {$(23)(04)$};
\draw (-1.4,-1.8) node[scale=0.5, myred]  {$(02)(14)$};
\draw (-1,7) node[scale=0.5, myred]  {$ (01)(23)$};
\draw (6.3,2.6) node[scale=0.5, myred]  {$ (24)(13) $};
\draw (-3.1,-5.8) node[scale=0.5, myred]  {$ (04)(12)$};
\draw (3.1,-5.8) node[scale=0.5, myred]  {$ (14)(03) $};

\draw (0,-7.9) node[scale=0.5, myred]  {$ (12)(03) $};
\draw (5.4,6) node[scale=0.5, myred]  {$ (01)(24) $};

\fill [myred] (234 :9) circle (0.2cm);
\fill [myred] (306 :9) circle (0.2cm);
\fill [myred] (18 :9) circle (0.2cm);
\fill [myred] (90 :9) circle (0.2cm);

\draw [myred ,line width=1mm] (90:9) to (54:7.4);
\draw [myred,line width=1mm] (234:9) to (270:7.4);
\draw [myred,line width=1mm] (306:9) to (270:7.4);

\draw [myred,line width=1mm] (18:9) to (54:7.4);

\fill [myred] (54 :7.4) circle (0.2cm);
\fill [myred] (270 :7.4) circle (0.2cm);

\fill [myred] (90 :7) circle (0.2cm);
\fill [myred] (234 :7) circle (0.2cm);
\fill [myred] (306 :7) circle (0.2cm);
\fill [myred] (18 :7) circle (0.2cm);

\draw [myred,line width=1mm] (90:9) to (90:7);
\draw [myred,line width=1mm] (234:9) to (234:7);
\draw [myred,line width=1mm] (306:9) to (306:7);
\draw [myred,line width=1mm] (18:9) to (18:7);
\draw [myred,line width=1mm] (90:7) to (90:5);
\draw [myred,line width=1mm] (234:7) to (234:5);
\draw [myred, line width=1mm] (306:7) to (306:5);
\draw [myred,line width=1mm] (18:7) to (18:5);

\fill [myred] (90 :5) circle (0.2cm);
\fill [myred] (162 :5) circle (0.2cm);
\fill [myred] (234 :5) circle (0.2cm);
\fill [myred] (306 :5) circle (0.2cm);
\fill [myred] (18 :5) circle (0.2cm);

\draw [myred,line width=1mm] (90:5) to (162:1.6);
\draw [myred,line width=1mm] (234:5) to (162:1.6);
\draw [myred,line width=1mm] (18:5) to (90:1.6);
\draw [myred,line width=1mm] (162:5) to (90:1.6);
\draw [myred,line width=1mm] (162:5) to (234:1.6);
\draw [myred,line width=1mm] (306:5) to (234:1.6);

\fill [myred] (90 :1.6) circle (0.2cm);
\fill [myred] (162 :1.6) circle (0.2cm);
\fill [myred] (234 :1.6) circle (0.2cm);
\draw (3.5,8) node[myred]  {$Red$};
\end{tikzpicture}\tag{Figure 5}
\end{equation}
The red thick subgraph corresponds to a divisor $D$ of type $\tilde{A}_{17}$. According to Corollary \ref{ellipticK3}, $D$ is a singular fiber of some elliptic fibration $p:X_4\ra \PP^1$. Note that $p$ is of type \textbf{(2)} with respect to Proposition \ref{ellipticX_4}. Indeed, ramification curve $L_{34}$ of $\pi$ does not intersect with $D$ hence it could not be a section of $p$. On the other hand any other ramification curve $L_{ij}$ of $\pi$ is contained in $D$. Thus sections of $D$ are not ramification curves of $\pi$ and this shows that $p$ is of type \textbf{(2)}. Since $p$ is of type \textbf{(2)} in Proposition \ref{ellipticX_4}, we derive that there exists reducible fiber $D'$ of $p$ distinct from $D$. Using notation as in discussion preceding Proposition \ref{ellipticX_4}, we derive that $p(D)$ and $p(D')$ are fixed points of $\tau$. Since $p$ is equivariant, we derive that $p(L_{34})$ is a fixed point of $\tau$. Thus $L_{34}\subseteq D'$. According to the fact that $D$ contains all ramification curves of $\pi$ except $L_{34}$ and due to Proposition \ref{classes}, fiber $D'$ must be of type $\tilde{A}_1$ with respect to Kodaira classification \cite[Chapter V, Section 7]{CCS}. Hence there exists a smooth, rational curve $L$ such that $L\cup L_{34}$ is of type $\tilde{A}_1$. We denote $L\cup L_{(34)}$ by blue dotted subgraph on the picture above. Clearly $L\in \bd{l_1}$.
\end{remark}
\noindent
We are ready to prove main results of this work.
\begin{fact}\label{central}
The nontrivial automorphism $\sigma:X_4\ra X_4$ of the covering $\pi$ is a central element of $\mathrm{Aut}(X_4)$.
\end{fact}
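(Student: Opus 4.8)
The plan is to deduce centrality directly from the cartesian square of Proposition \ref{cartesian} together with our explicit knowledge of how $\sigma$ acts on the lattices $S_{X_4}$ and $T_{X_4}$. The key observation is that Proposition \ref{cartesian} exhibits $\mathrm{Aut}(X_4)$ as the fibre product $O_+(S_{X_4})\times_{\mathrm{Aut}(D_{X_4})}U_{X_4}$, so in particular the map sending $g\in\mathrm{Aut}(X_4)$ to the pair $\left(g^*|_{S_{X_4}},\,g^*|_{T_{X_4}}\right)\in O_+(S_{X_4})\times U_{X_4}$ is injective: two automorphisms inducing the same pair have the same image in the fibre product, hence coincide. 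It therefore suffices to check that, for an arbitrary $g\in\mathrm{Aut}(X_4)$, the conjugate $h=g\sigma g^{-1}$ acts on $S_{X_4}$ and on $T_{X_4}$ exactly as $\sigma$ does.

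First I would record the action of $\sigma$ on the two lattices: by Proposition \ref{linearization} we have $\sigma^*=1_{S_{X_4}}$, and by the Remark following Proposition \ref{transc} the non-symplectic involution $\sigma$ satisfies $\sigma^*=-1_{T_{X_4}}$. Now fix $g\in\mathrm{Aut}(X_4)$ and set $h=g\sigma g^{-1}$. Since pullback is contravariant, $h^*=(g^{-1})^*\circ\sigma^*\circ g^*$ on $\mathrm{H}^2(X_4,\ZZ)$; because any automorphism of $X_4$ sends algebraic classes to algebraic classes, $g^*$ preserves both $S_{X_4}=\mathrm{NS}(X_4)$ and its orthogonal complement $T_{X_4}$, so this identity may be read off summand by summand. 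On $S_{X_4}$ we get $h^*=(g^{-1})^*\circ 1_{S_{X_4}}\circ g^*=1_{S_{X_4}}=\sigma^*$, and on $T_{X_4}$, using that $-1_{T_{X_4}}$ is central in $O(T_{X_4})$, we get $h^*=(g^{-1})^*\circ(-1_{T_{X_4}})\circ g^*=-1_{T_{X_4}}=\sigma^*$. Thus $h$ and $\sigma$ have the same image under the injective map coming from Proposition \ref{cartesian}, whence $h=\sigma$, i.e. $g\sigma=\sigma g$. As $g$ was arbitrary, $\sigma$ is central in $\mathrm{Aut}(X_4)$.

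I do not anticipate a genuine obstacle here; the only points requiring a word of care are (i) spelling out that a cartesian square of groups yields injectivity of $\mathrm{Aut}(X_4)\to O_+(S_{X_4})\times U_{X_4}$, and (ii) noting that an automorphism of $X_4$ preserves the decomposition into $S_{X_4}$ and $T_{X_4}$, so that the conjugation computation can indeed be carried out separately on the algebraic and transcendental parts. A purely geometric alternative — arguing that $g\sigma g^{-1}$ is again a non-symplectic involution realizing $X_4$ as a double cover of $Y_4$ and invoking uniqueness of such an involution — is available, but it is less transparent than the lattice-theoretic argument above, so I would not pursue it.
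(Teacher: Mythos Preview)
Your proof is correct and follows the same overall strategy as the paper: both reduce centrality of $\sigma$ via the cartesian square of Proposition~\ref{cartesian} to centrality of its images in $O_+(S_{X_4})$ and in $U_{X_4}$, the algebraic side being trivial since $\sigma^*=1_{S_{X_4}}$. The one point of difference is on the transcendental side: the paper invokes the general fact \cite[Chapter 3, Corollary 3.4]{Huy} that $U_X$ is finite cyclic for any projective K3 surface, hence abelian, so any element of $U_{X_4}$ is central; you instead use the explicit identification $\sigma^*|_{T_{X_4}}=-1_{T_{X_4}}$ recorded in the Remark after Proposition~\ref{transc} and observe that $-1$ is central in $O(T_{X_4})$. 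Your argument is a touch more self-contained (it avoids the external citation), while the paper's is marginally more conceptual and would work verbatim without knowing the precise action of $\sigma$ on $T_{X_4}$.
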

\begin{proof}
Recall that according to \cite[Chapter 3, Corollary 3.4]{Huy} the group of Hodge isometries of the transcendental lattice of a projective K3 surface is finite and cyclic. In particular, the image of $\sigma$ under $\mathrm{Aut}(X_4)\ra U_{X_4}$ is central. According to the fact that $\sigma^*=1_{\mathrm{Pic}(X_4)}=1_{S_{X_4}}$, we derive that $\sigma^*\in O_+(S_{X_4})$ is also central. According to Proposition \ref{cartesian} there is an isomorphism
$$\mathrm{Aut}(X_4)\cong U_{X_4}\times_{\mathrm{Aut}(D_{X_4})}O_+(S_{X_4})$$
It follows that $\sigma$ is a central element of $\mathrm{Aut}(X_4)$.
\end{proof}
\noindent
Let $f\in \mathrm{Aut}(X_4)$ be an automorphism. Then there exists a unique automorphism $\theta(f)\in \mathrm{Aut}(Y_4)$ such that $\pi \cdot \theta(f)=f\cdot \pi$. Indeed, $Y_4$ is a quotient of $X_4$ with respect to $\left<\sigma \right>=\{1_{X_4},\sigma\}\subseteq \mathrm{Aut}(X_4)$ and this is a central subgroup of $\mathrm{Aut}(X_4)$ by Fact \ref{central}. Hence every $f\in \mathrm{Aut}(X_4)$ is equivariant with respect to action of this subgroup. Finally every equivariant automorphism induces a unique automorphism of a quotient.

\begin{theorem}\label{exactaut}
There exists a short exact sequence of groups
\begin{center}
\begin{tikzpicture}
[description/.style={fill=white,inner sep=2pt}]
\matrix (m) [matrix of math nodes, row sep=3em, column sep=2em,text height=1.5ex, text depth=0.25ex] 
{  1 & \left<\sigma\right>&\mathrm{Aut}(X_4) & \mathrm{Aut}(Y_4)&1    \\} ;
\path[->,font=\scriptsize]  
(m-1-1) edge node[auto] {$ $} (m-1-2)
(m-1-2) edge node[auto] {$ $} (m-1-3)
(m-1-3) edge node[auto] {$ \theta$} (m-1-4)
(m-1-4) edge node[auto] {$ $} (m-1-5);
\end{tikzpicture}
\end{center}
and $\left<\sigma \right>$ is a central subgroup of $\mathrm{Aut}(X_4)$.
\end{theorem}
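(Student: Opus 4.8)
The plan is to verify exactness of the sequence at each of its three non-trivial spots; the centrality of $\left<\sigma\right>$ in $\mathrm{Aut}(X_4)$ is exactly the content of Fact \ref{central}, so nothing new is needed there. The homomorphism $\theta$ is already available from the paragraph preceding the statement: since $\left<\sigma\right>$ is central by Fact \ref{central}, every $f\in\mathrm{Aut}(X_4)$ normalizes it and hence descends to the quotient $Y_4=X_4/\left<\sigma\right>$, giving the unique $\theta(f)\in\mathrm{Aut}(Y_4)$ with $\pi\circ f=\theta(f)\circ\pi$; compatibility of this construction with composition makes $\theta$ a group homomorphism. Exactness at the leftmost $\left<\sigma\right>$ is immediate, so only the computation of $\ker\theta$ and the surjectivity of $\theta$ require work.

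To identify $\ker\theta$, suppose $\theta(f)=1_{Y_4}$. The defining relation then reads $\pi\circ f=\pi$, so $f$ is an automorphism of $X_4$ over $Y_4$, i.e. it lies in the group of deck transformations of the finite degree-two morphism $\pi$. Over the complement of the branch locus $\pi$ is an étale double cover, and any deck transformation is determined by its restriction to this dense open set, where the deck group has order two; since the deck group of $\pi$ already contains $\sigma$, it equals $\left<\sigma\right>$. Conversely $\pi\circ\sigma=\pi$ forces $\theta(\sigma)=1_{Y_4}$. Hence $\ker\theta=\left<\sigma\right>$, which is also the image of the inclusion $\left<\sigma\right>\hookrightarrow\mathrm{Aut}(X_4)$, and this gives exactness in the middle.

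It remains to prove that $\theta$ is surjective, and this is where the geometry of $Y_4$ enters. Fix $g\in\mathrm{Aut}(Y_4)$. By Proposition \ref{classes}, a curve on $Y_4$ has negative self-intersection only if it is a smooth rational curve with self-intersection $-1$ or $-4$, and the curves with self-intersection $-4$ are exactly the ten branch curves $F_{ij}$ of $\pi$. Since $g$ carries irreducible curves to irreducible curves and preserves self-intersection numbers, it permutes the set $\{F_{ij}\}_{0\le i<j\le 4}$; consequently $g^{*}B_{\pi}=B_{\pi}$ for the reduced branch divisor $B_{\pi}=\sum_{0\le i<j\le 4}F_{ij}$. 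By Proposition \ref{petersen}\emph{(2)} we have $B_{\pi}\sim -2K_{Y_4}$, and $\pi:X_4\ra Y_4$ is by construction the anticanonical cyclic double covering of $Y_4$ branched along $B_{\pi}$. Applying Proposition \ref{lifting} with $n=2$, $D=B_{\pi}$ and $f=g$ produces an automorphism $\tilde g$ of $X_4$ with $\pi\circ\tilde g=g\circ\pi$; since $\pi$ is an epimorphism, this is precisely the relation characterizing $\theta(\tilde g)$, so $\theta(\tilde g)=g$. Thus $\theta$ is onto, and together with the kernel computation and Fact \ref{central} this establishes the short exact sequence with $\left<\sigma\right>$ central.

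The step I expect to be the main obstacle is the surjectivity of $\theta$, and within it the verification that an arbitrary automorphism of $Y_4$ preserves the branch divisor $B_{\pi}$; this rests entirely on the classification of negative curves in Proposition \ref{classes}, which singles out the $F_{ij}$ as exactly the $(-4)$-curves of $Y_4$. Once that is granted, Proposition \ref{lifting} supplies the lift with no further computation.
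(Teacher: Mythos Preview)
Your proof is correct and follows essentially the same approach as the paper. Both arguments cite Fact~\ref{central} for centrality, identify $\ker\theta$ with the deck transformation group of $\pi$, and obtain surjectivity by combining Proposition~\ref{classes} (the $F_{ij}$ are precisely the $(-4)$-curves, so any automorphism of $Y_4$ preserves $B_\pi$) with Proposition~\ref{lifting}; your write-up is simply more explicit at each step.
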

\begin{proof}
By the discussion above it remains to prove that $\theta$ is onto. Recall that $B_{\pi}$ denotes the branch divisor of $\pi$. Since $B_{\pi}=\sum_{0\leq i<j\leq 4}F_{ij}$ and $F_{ij}$ for $0\leq i<j\leq 4$ are the only $(-4)$-curves on $Y_4$ and any two among them do not intersect, we derive that $B_{\pi}$ is preserved by every automorphism of $Y_4$. Application of Proposition \ref{lifting} to the anticanonical cyclic covering $\pi$ shows that every automorphism $h$ of $Y_4$ admits a lift $\tilde{h}:X_4\ra X_4$ i.e there exists an automorphism $\tilde{h}:X_4\ra X_4$ such that $\pi\cdot \tilde{h}=h\cdot \pi$. Obviously $\theta(\tilde{h})=h$. This shows that $\theta$ is surjective.\\
Finally note that the kernel of $\theta$ consists of automorphisms of $X_4$ over $Y_4$. Hence it is equal to $\left<\sigma \right>$.
\end{proof}
\begin{remark}
{\cite[Theorem 2.4]{VIN}} implies that exact sequence in Theorem \ref{exactaut} does not admit a section.
\end{remark}
\noindent
Let $\Sigma_{\bd{b}}$ be the group of bijections of the set $\bd{b}$ of branch curves of $\pi$. The action of $\mathrm{Aut}(Y_4)$ on $(-4)$-curves of $Y_4$ gives rise to a homomorphism of groups $\mathrm{Aut}(Y_4)\ra \Sigma_{\bd{b}}$. We define a subgroup $G\subseteq \Sigma_{\bd{b}}$ as the image of an injective homomorphism $\Sigma_5\ra \Sigma_{\bd{b}}$ sending $\tau \in \Sigma_5$ to the bijection given by $F_{ij}\mapsto F_{\tau(i)\tau(j)}$.
\begin{theorem}\label{main3}
The image of the homomorphism $\mathrm{Aut}(Y_4)\ra \Sigma_{\bd{b}}$ is $G$. Moreover, the epimorphism $\mathrm{Aut}(Y_4)\ra G$ admits a section given by homomorphism $G\cong \mathrm{Aut}(S_5)\ra \mathrm{Aut}(Y_4)$ that lifts an automorphism of $S_5$ along birational contraction $m:Y_4\ra S_5$.
\end{theorem}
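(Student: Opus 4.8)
The plan is to prove both parts together, with the section asserted in Theorem~\ref{main3} carrying most of the weight. It is classical (see \cite{DIC}) that $\mathrm{Aut}(S_5)\cong\Sigma_5$, acting on the ten $(-1)$-curves of $S_5$ by $E_{ij}\mapsto E_{\tau(i)\tau(j)}$. Since the centres of the blow-up $m\colon Y_4\to S_5$ are exactly the pairwise intersection points of the configuration $\{E_{ij}\}$, this finite set is preserved by every $g\in\mathrm{Aut}(S_5)$; hence, by the universal property of blowing up, $g$ lifts uniquely to $\tilde g\in\mathrm{Aut}(Y_4)$ with $m\tilde g=gm$, and $g\mapsto\tilde g$ is an injective homomorphism $\iota\colon\mathrm{Aut}(S_5)\to\mathrm{Aut}(Y_4)$. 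Because $\tilde g$ sends the strict transform $F_{ij}$ of $E_{ij}$ to that of $E_{g(i)g(j)}$, the composite of $\iota$ with $\mathrm{Aut}(Y_4)\to\Sigma_{\bd{b}}$ is precisely the defining injection $\Sigma_5\to\Sigma_{\bd{b}}$ with image $G$. So $G$ lies in the image of $\mathrm{Aut}(Y_4)\to\Sigma_{\bd{b}}$, and once that image is known to equal $G$, the map $\iota$ — viewed as landing in the preimage of $G$ — is the asserted section of the epimorphism $\mathrm{Aut}(Y_4)\to G$.

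It remains to prove that $\mathrm{Aut}(Y_4)\to\Sigma_{\bd{b}}$ has image contained in $G$; since $G\cong\Sigma_5$ is the automorphism group of the Petersen graph and already occurs in the image, equality then follows. By Proposition~\ref{classes} the set $\bd{b}=\{F_{ij}\}$ is the full collection of $(-4)$-curves, so each $f\in\mathrm{Aut}(Y_4)$ permutes $\bd{b}$, and the task is to see that this permutation preserves the Petersen graph of Proposition~\ref{petersen}. Call $F,F'\in\bd{b}$ \emph{linked} if some $(-1)$-curve on $Y_4$ meets both — necessarily a curve of type $\bd{f_2}$. As automorphisms carry $(-1)$-curves to $(-1)$-curves and $(-4)$-curves to $(-4)$-curves while preserving intersection numbers, the linkedness relation is $\mathrm{Aut}(Y_4)$-invariant; it therefore suffices to show that $F_{ij}$ is linked to $F_{kl}$ if and only if $\{i,j\}\cap\{k,l\}=\emptyset$, i.e.\ that ``linked'' is Petersen adjacency. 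One implication is immediate, since $F_{(ij)(kl)}$ meets $F_{ij}$ and $F_{kl}$ whenever $\{i,j\}\cap\{k,l\}=\emptyset$. For the converse I would argue that no curve of type $\bd{f_2}$ can meet two branch curves $F_{ij},F_{kl}$ sharing an index: such a hypothetical $(-1)$-curve $C$ is not $m$-exceptional (the $m$-exceptional curves are exactly the $F_{(ab)(cd)}$, and by Proposition~\ref{petersen} each meets only the disjoint-index pair $F_{ab},F_{cd}$), so $C$ descends to an irreducible curve $m(C)$ on $S_5$ meeting $E_{ij}$ and $E_{kl}$; using $C^2=K_{Y_4}\cdot C=-1$ one expresses $-K_{S_5}\cdot m(C)$ and $m(C)^2$ in terms of the multiplicities of $m(C)$ at the centres of $m$, and the fact that the only negative curves on $S_5$ are the ten lines $E_{ab}$ forces a contradiction.

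The converse just sketched is the main obstacle. It is genuine: $Y_4$ is far from minimal and carries many more negative curves than those listed in Table~\ref{table} — indeed $X_4$ already has infinitely many $(-2)$-curves — so one cannot merely enumerate the curves of type $\bd{f_2}$ and must instead control all of them through the geometry of $S_5$. An equivalent packaging of this step replaces linkedness by the five fibrations $q_0,\dots,q_4\colon Y_4\to\PP^1$ obtained by composing $m$ with the five conic fibrations of $S_5$: a general fibre of $q_r$ meets $F_{ij}$ exactly when $r\in\{i,j\}$, so $F_{ij}$ is recovered canonically as the unique $(-4)$-curve met by the general fibres of both $q_i$ and $q_j$, and it then suffices to show the $q_r$ are the only fibrations of $Y_4$ whose general fibre meets exactly four of the ten $(-4)$-curves, each transversally once. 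This is the same difficulty in fibration-theoretic clothing; in either form I expect it to reduce, by Proposition~\ref{classes} and descent along $m$, to the finite and explicit geometry of the del Pezzo surface $S_5$.
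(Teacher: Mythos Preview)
Your construction of the section and the verification that it hits $G$ coincide with the paper's. The divergence is in the upper bound on the image of $\mathrm{Aut}(Y_4)\to\Sigma_{\bd b}$. The paper does not stay on $Y_4$: it passes to $X_4$, observes that automorphisms act on the lattice $S_{X_4}$ preserving the ten classes in $\bd r$, and then cites Vinberg \cite[Section~2.2]{VIN} for the fact that the relevant symmetry group attached to $S_{X_4}$ (phrased there as $\mathrm{Aut}(\Gamma)$ for a graph $\Gamma$ built from $S_{X_4}$) is $\Sigma_5$. The entire combinatorial difficulty is thus absorbed into Vinberg's lattice analysis; your attempt at a self-contained geometric proof on $Y_4$ is a genuinely different strategy, and would be more elementary if it went through.

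It does not go through as written, and the gap is exactly where you flag it. Your expectation that the converse ``reduces to the finite and explicit geometry of $S_5$'' is not borne out. Descending a hypothetical $\bd{f_2}$-curve $C$ along $m$ gives an irreducible $\bar C=m_*C$ on $S_5$ with $-K_{S_5}\cdot\bar C=1+\sum_p\mu_p$ and $\bar C^{2}=-1+\sum_p\mu_p^{2}$, but nothing in your numerics bounds $\sum_p\mu_p$, and no bound can come for free: you yourself note that $Y_4$ has infinitely many $(-1)$-curves, and since $S_5$ carries only finitely many curves of each fixed anticanonical degree, their images $\bar C$ have unbounded degree. Concretely, $f_Q(F_{(12)(34)})$ is already a $\bd{f_2}$-curve outside the fifteen $m$-exceptional ones (it still meets $F_{12}$ and $F_{34}$, hence cannot equal any $F_{(ab)(cd)}$ with $\{ab,cd\}\neq\{12,34\}$, and Proposition~\ref{quadratic} says it is not $F_{(12)(34)}$ itself), so even for disjoint-index pairs one must analyse $\bar C$ of positive degree on $S_5$. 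To exclude the overlapping-index case you would need an actual inequality---for instance a Hodge-index argument exploiting the eight constraints $\bar C\cdot E_{ab}=\sum_{p\in E_{ab}}\mu_p$ for $ab\notin\{ij,il\}$---and none is supplied. The fibration reformulation carries the same unbounded-degree issue. Absent such an argument the proof is incomplete, and the paper's appeal to Vinberg is precisely what substitutes for it.
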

\begin{proof}
Recall that $Y_4$ is constructed as the blowing up $m:Y_4\ra S_5$ of intersection points of all $(-1)$-curves on $S_5$. Since every automorphism of $S_5$ permutes these intersection points, we derive that every automorphism of $S_5$ can be uniquely lifted to an automorphism of $Y_4$. This gives a homomorphism $s:G\cong \mathrm{Aut}(S_5)\ra \mathrm{Aut}(Y_4)$.\\
We define a graph $\Gamma$. Vertices of $\Gamma$ are primitive elements of the lattice $S_{X_4}$ with negative self-intersection. Two vertices $v_1$, $v_2$ of $\Gamma$ are adjacent if $\langle v_1, v_2 \rangle \neq 0$. The set of ramification curves $\bd{r}$ of $\pi$ is a subset of the set of vertices of $\Gamma$. We define a subgroup $\mathrm{Aut}(\bd{r},\Gamma)$ of $\mathrm{Aut}(\Gamma)$ consisting of these automorphisms of $\Gamma$ that preserve vertices in $\bd{r}$. Pick an automorphism $\phi\in \mathrm{Aut}(X_4)$. Then $\phi$ induces an orthogonal transformation of $S_{X_4}$ and hence it induces an automorphism $\phi^*$ of the graph $\Gamma$. Now according to the short exact sequence of Theorem \ref{exactaut} and the fact that $\pi$ sends curves in the class $\bd{r}$ to curves in the class $\bd{b}$ of all $(-4)$-curves on $Y_4$(Proposition \ref{classes}), we derive that $\phi^*$ preserves $\bd{r}$. Thus there exists a homomorphism of groups $\Phi:\mathrm{Aut}(X_4)\ra \mathrm{Aut}(\bd{r},\Gamma)$ given by $\phi \mapsto \phi^*$. We have the following diagram
\begin{center}
\begin{tikzpicture}
[description/.style={fill=white,inner sep=2pt}]
\matrix (m) [matrix of math nodes, row sep=3em, column sep=2em,text height=1.5ex, text depth=0.25ex] 
{  \mathrm{Aut}(X_4) & \mathrm{Aut}(\bd{r},\Gamma) &\mathrm{Aut}(\Gamma)     \\
                     & \Sigma_{\bd{r}}\cong \Sigma_{\bd{b}}        &         \\          } ;
\path[->,font=\scriptsize]  
(m-1-1) edge node[auto] {$ \Phi $} (m-1-2)
(m-1-2) edge node[auto] {$ \Xi $} (m-2-2);
\path[right hook->,font=\scriptsize]  
(m-1-2) edge node[auto] {$ $} (m-1-3);
\end{tikzpicture}
\end{center}
Here the second vertical arrow is the canonical monomorphism of groups and $\Xi$ is restriction to $\bd{r}$. Since $\sigma$ induces identity on $S_{X_4}$, we deduce that $\Phi(\langle \sigma \rangle)=\{1_{\Gamma}\}$. Thus we have a factorization $\Xi\cdot \Phi=\Psi\cdot \theta$, where $\Psi:\mathrm{Aut}(Y_4)\ra \Sigma_{\bd{r}}\cong \Sigma_{\bd{b}}$ is a homomorphism given by restricting automorphisms of $Y_4$ to the set of $(-4)$-curves and $\theta$ is defined in Theorem \ref{exactaut}. In {\cite[Section 2.2]{VIN}} it is stated that $\mathrm{Aut}(\Gamma)\cong \Sigma_5$. Therefore, $\Phi\left(\mathrm{Aut}(X_4)\right)\subseteq \mathrm{Aut}(\Gamma)\cong \Sigma_5$ and hence $\big| \Psi\left(\mathrm{Aut}(Y_4)\right)\big|\leq 5!=\big|G\big|$. Moreover, by construction of $s$ we have $\left(\Psi \cdot s\right)(G)=G$. Thus $\Psi$ has $G$ as its image and induces an epimorphism $\mathrm{Aut}(Y_4)\ra G$ that has $s$ as its section.
\end{proof}
\noindent
Now we come back to the construction of $X_4$ from section \ref{3}. In that section we constructed $X_4$ starting from four points $P=\{p_1,p_2,p_3,p_4\}$ on $\PP^2$ such that no three of them are on the same line. Hence there exists a system of homogeneous coordinates $[x_0,x_1,x_2]$ on $\PP^2$ such that $p_1=[-1,1,1]$, $p_2=[1,-1,1]$, $p_3=[1,1,-1]$, $p_4=[1,1,1]$. For given $\{i,j\}\in \{1,2,3,4\}$ denote by $N_{ij}$ the line through $p_k$ and $p_l$ for $\{k,l\}=\{1,2,3,4\}\setminus \{i,j\}$. Observe that $[1,1,-1]$, $[1,-1,1]$, $[-1,1,1]$ and $[1,1,1]$ are points of multiplicity three of this configuration of lines and there are three additional points of multiplicity two. One can easily calculate that these points are $q_1=[0,0,1]$, $q_2=[0,1,0]$, $q_3=[1,0,0]$. The whole configuration is showed in the following picture. 
\begin{center}
\begin{tikzpicture}[scale=0.68, every node/.style={scale=0.8}]
\node[scale=0.7] at (-0.8,2.1) {$p_1=[-1,1,1]$};
\node[scale=0.7] at (-1.8,-1.35) {$p_2=[1,-1,1]$};
\node[scale=0.7] at (2.6,0.85) {$p_3=[1,1,-1]$};
\node[scale=0.7] at (2.1,-1) {$p_4=[1,1,1]$};
\node[scale=0.7] at (-0.7,-0.4) {$q_1=[1,0,0]$};
\node[scale=0.7] at (-1,-3.6) {$q_2=[0,0,1]$};
\node[scale=0.7] at (5,0) {$q_3=[0,1,0]$};
\node[scale=0.7] at (6,4) {$N_{14}:x_1+x_2=0$};
\node[scale=0.7] at (1.5,4) {$N_{12}:x_0-x_1=0$};
\node[scale=0.7] at (-1.3,4) {$N_{34}:\,x_0+x_1=0$};
\node[scale=0.7] at (4.3,-3) {$N_{23}:x_1-x_2=0$};
\node[scale=0.7] at (-4.3,2.2) {$N_{24}:x_0+x_2=0$};
\node[scale=0.7] at (-4.1,-2.2) {$N_{13}:x_0-x_2=0$};
\fill [black] (1.5,0.73) circle (0.1cm);
\fill [black] (0.93,-0.93) circle (0.1cm);
\fill [black] (-0.74,-1.49) circle (0.1cm);
\fill [black] (-1.91,1.92) circle (0.1cm);
\fill [black] (0.36,-0.37) circle (0.1cm);
\fill [black] (0,-3.57) circle (0.1cm);
\fill [black] (3.57,0) circle (0.1cm);
\draw [black, thick] (-5,3) to (5,-0.5);
\draw [black,  thick] (-5,-3) to (5,0.5);
\draw [black, thick] (-3,5) to (-1.15,-0.285);
\draw [black, thick] (-1.05,-0.571) to (0.5,-5);
\draw [black,  thick] (3,5) to (-0.5,-5);
\draw [black,  thick] (-4.3,4.3) to (4,-4);
\draw [black,  thick] (-3.5,-4.3) to (5.2,4.5);
\end{tikzpicture}
\end{center}
In the picture every point and line is described in terms of the homogeneous coordinates $[x_0,x_1,x_2]$.  For given $\{i,j\}\subseteq \{1,2,3\}$ denote by $K_{ij}$ the line through points $q_i$ and $q_j$. Now consider the standard quadratic transformation of $\PP^2$ given by formula $Q([x_0,x_1,x_2])=[x_1x_2,x_0x_2,x_0x_1]$. One verifies that
\begin{enumerate}[label=\textbf{(\roman*)}, leftmargin=*]
\item $Q$ is undefined precisely at points $q_1$, $q_2$, $q_3$ and blowing up of points $\{q_1,q_2,q_3\}$ resolves the indeterminacy of $Q$.
\item $Q(K_{ij}\setminus \{q_1,q_2,q_3\})=q_k$ for $\{i,j\}\subseteq \{1,2,3\}$ and $\{k\}=\{1,2,3\}\setminus \{i,j\}$.
\item $Q(N_{ij}\setminus \{q_1,q_2,q_3\})\subseteq N_{ij}$ for $\{i,j\}\subseteq \{1,2,3,4\}$ and $Q(p_i)=p_i$ for $i\in \{1,2,3,4\}$. 
\end{enumerate}

\begin{proposition}\label{quadratic}
Quadratic tranformation $Q$ of $\PP^2$ gives rise to an automorphism $f_Q$ of order two of the surface $Y_4$. This automorphism has the following properties.
\begin{enumerate}[label=\emph{\textbf{(\alph*)}}, leftmargin=*]
\item $f^*_Q:\mathrm{Pic}(Y_4)\ra \mathrm{Pic}(Y_4)$ acts as identity on some sublattice of $\mathrm{Pic}(Y_4)$ of rank $19$.
\item $f_Q$ stabilizes all curves $\{F_{ij}\}_{0\leq i<j\leq 4}$ and stabilizes all curves $\{F_{(ij)(kl)}\}$ for $\{i,j\}$, $\{k,l\}\subseteq \{0,1,2,3,4\}$, $\{i,j\}\cap \{k,l\}=\emptyset$ except $F_{(12)(34)}$, $F_{(13)(24)}$, $F_{(14)(23)}$. This is denoted in the \ref{fig6}, where filled vertices correspond to curves of the configuration that are stabilized by $f_{Q}$.
\begin{equation}\label{fig6}
\begin{tikzpicture}[scale=0.37, every node/.style={scale=0.8}]
\draw (-4.7,2) node[scale=0.5]  {$(02)$};
\draw (4.7,2) node[scale=0.5]   {$(13)$};
\draw (0,9.5) node[scale=0.5]   {$(01)$};
\draw (-9.2,3) node[scale=0.5]   {$(34)$};
\draw (9.2,3) node[scale=0.5]   {$(24)$};
\draw (-0.6,4.9)node[scale=0.5]   {$(23)$};
\draw (3.6,-4)node[scale=0.5]   {$(14)$};
\draw (-3.6,-4)node[scale=0.5]   {$(04)$};
\draw (-5.8,-7.6)node[scale=0.5]   {$(12)$};
\draw (5.8,-7.6)node[scale=0.5]   {$(03)$};

\draw (0,2.1) node[scale=0.5]  {$(02)(13)$};
\draw (-2.5,0.65) node[scale=0.5]  {$(23)(04)$};
\draw (2.5,0.65) node[scale=0.5]  {$(23)(14)$};
\draw (1.4,-1.8) node[scale=0.5]  {$(13)(04)$};
\draw (-1.4,-1.8) node[scale=0.5]  {$(02)(14)$};

\draw (-1,7) node[scale=0.5]  {$ (01)(23)$};
\draw (-6.3,2.6) node[scale=0.5]  {$ (34)(02)$};
\draw (6.3,2.6) node[scale=0.5]  {$ (24)(13) $};
\draw (-3.1,-5.8) node[scale=0.5]  {$ (04)(12)$};
\draw (3.1,-5.8) node[scale=0.5]  {$ (14)(03) $};

\draw (-5.4,6) node[scale=0.5]  {$ (01)(34) $};
\draw (5.4,6) node[scale=0.5]  {$ (01)(24) $};
\draw (-8,-2.4) node[scale=0.5]  {$ (34)(12) $};
\draw (8.05,-2.4) node[scale=0.5]  {$ (24)(03) $};
\draw (0,-7.9) node[scale=0.5]  {$ (12)(03) $};

\fill [black] (90 :9) circle (0.2cm);
\fill [black] (162 :9) circle (0.2cm);
\fill [black] (234 :9) circle (0.2cm);
\fill [black] (306 :9) circle (0.2cm);
\fill [black] (18 :9) circle (0.2cm);

\draw [black ,very thick] (90:9) to (54:7.4);
\draw [black, very thick] (90:9) to (126:7.4);
\draw [black,very thick] (162:9) to (126:7.4);
\draw [black,densely dotted] (162:9) to (198:7.4);
\draw [black,densely dotted] (234:9) to (198:7.4);
\draw [black,very thick] (234:9) to (270:7.4);
\draw [black,very thick] (306:9) to (270:7.4);
\draw [black,very thick] (306:9) to (342:7.4);
\draw [black,very thick] (18:9) to (342:7.4);
\draw [black,very thick] (18:9) to (54:7.4);

\fill [black] (54 :7.4) circle (0.2cm);
\fill [black] (126 :7.4) circle (0.2cm);
\fill [black] (198 :7.4) circle (0.2cm);
\fill [black] (270 :7.4) circle (0.2cm);
\fill [black] (342 :7.4) circle (0.2cm);

\fill [black] (90 :7) circle (0.2cm);
\fill [black] (162 :7) circle (0.2cm);
\fill [black] (234 :7) circle (0.2cm);
\fill [black] (306 :7) circle (0.2cm);
\fill [black] (18 :7) circle (0.2cm);

\draw [black,very thick] (90:9) to (90:7);
\draw [black,very thick] (162:9) to (162:7);
\draw [black,very thick] (234:9) to (234:7);
\draw [black,very thick] (306:9) to (306:7);
\draw [black,densely dotted] (18:9) to (18:7);

\draw [black,very thick] (90:7) to (90:5);
\draw [black,very thick] (162:7) to (162:5);
\draw [black,very thick] (234:7) to (234:5);
\draw [black, very thick] (306:7) to (306:5);
\draw [black,densely dotted] (18:7) to (18:5);

\fill [black] (90 :5) circle (0.2cm);
\fill [black] (162 :5) circle (0.2cm);
\fill [black] (234 :5) circle (0.2cm);
\fill [black] (306 :5) circle (0.2cm);
\fill [black] (18 :5) circle (0.2cm);

\draw [black,very thick] (90:5) to (162:1.6);
\draw [black,densely dotted] (90:5) to (18:1.6);
\draw [black,very thick] (234:5) to (162:1.6);
\draw [black,very thick] (234:5) to (306:1.6);
\draw [black,very thick] (18:5) to (306:1.6);
\draw [black,very thick] (18:5) to (90:1.6);
\draw [black,very thick] (162:5) to (90:1.6);
\draw [black,very thick] (162:5) to (234:1.6);
\draw [black,very thick] (306:5) to (234:1.6);
\draw [black,densely dotted] (306:5) to (18:1.6);

\fill [black] (90 :1.6) circle (0.2cm);
\fill [black] (162 :1.6) circle (0.2cm);
\fill [black] (234 :1.6) circle (0.2cm);
\fill [black] (306 :1.6) circle (0.2cm);
\fill [black] (18 :1.6) circle (0.2cm);

\fill [white] (18 :7) circle (0.15cm);
\fill [white] (18 :1.6) circle (0.15cm);
\fill [white] (198 :7.4) circle (0.15cm);
\end{tikzpicture}\tag{Figure 6}
\end{equation}
\end{enumerate}
\end{proposition}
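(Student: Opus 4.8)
The plan is to resolve the indeterminacy of $Q$ by first blowing up its three fundamental points $q_1,q_2,q_3$, obtaining a biregular involution of the del Pezzo surface $Z:=\mathrm{Bl}_{q_1,q_2,q_3}(\PP^2)$ of degree six, and then to observe that the remaining blow-ups producing $Y_4$ from $Z$ are all centred at points fixed by that involution. To set this up, reorder the sequence of blow-ups defining $Y_4\to\PP^2$ so that $q_1,q_2,q_3$ — distinct points of $\PP^2$ away from $\{p_1,\dots,p_4\}$ — are blown up first, giving $Z$; the remaining centres are then $p_1,\dots,p_4$ together with the twelve infinitely near points over them, namely the tangent directions at each $p_m$ of the three lines $N_{ij}$ through $p_m$. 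Observe moreover that on $S_5$ the points $q_1,q_2,q_3$ are $E_{12}\cap E_{34}$, $E_{13}\cap E_{24}$, $E_{14}\cap E_{23}$, so $F_{(12)(34)},F_{(13)(24)},F_{(14)(23)}$ are precisely the exceptional divisors of $m$ over $q_1,q_2,q_3$, while the twelve curves $F_{(0m)(ab)}$ are the exceptional divisors over the twelve tangent directions.

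By (i), $Q$ induces a biregular involution $\bar Q$ of $Z$, and by (ii) it exchanges the exceptional curve over $q_k$ with the strict transform of the line $K_{ij}$, where $\{k\}=\{1,2,3\}\setminus\{i,j\}$. By (iii), $\bar Q$ fixes each $p_m$ and maps each line $N_{ij}$ to itself. Since $Q$ is a local isomorphism at $p_m$ and fixes the three distinct tangent directions of the three lines $N_{ij}$ through $p_m$, its differential acts as the identity on $\PP(T_{p_m}\PP^2)$; hence $\bar Q$ fixes all twelve infinitely near centres as well. Therefore the tower $Y_4\to Z$ is $\bar Q$-equivariant and $\bar Q$ lifts to an automorphism $f_Q$ of $Y_4$. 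As $Q\circ Q=1_{\PP^2}$ and $f_Q$ lifts $Q$, we get $f_Q^{2}=1_{Y_4}$, and $f_Q\neq 1_{Y_4}$ because $Q$ is not the identity, so $f_Q$ has order two.

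For (a), the fact that $f_Q$ fixes all sixteen centres of $Y_4\to Z$ shows that the decomposition $\mathrm{Pic}(Y_4)\cong\mathrm{Pic}(Z)\oplus\ZZ^{16}$ — pullback of $\mathrm{Pic}(Z)$ plus the sixteen new total-transform classes — is $f_Q^{*}$-invariant, with $f_Q^{*}=\bar Q^{*}\oplus 1_{\ZZ^{16}}$ on it. Writing $\mathrm{Pic}(Z)=\ZZ H\oplus\ZZ e_1\oplus\ZZ e_2\oplus\ZZ e_3$, with $H$ the pullback of a line and $e_i$ the class of the exceptional divisor over $q_i$, the map $\bar Q^{*}$ is the standard quadratic Cremona involution $H\mapsto 2H-e_1-e_2-e_3$, $e_i\mapsto H-e_j-e_k$. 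Its $(+1)$-eigensublattice has rank three — it contains $-K_Z=3H-e_1-e_2-e_3$ and the differences $e_i-e_j$ — while its $(-1)$-eigenspace is the line spanned by $e_1+e_2+e_3-H$. Hence $f_Q^{*}$ restricts to the identity on the rank-$19$ sublattice $\mathrm{Pic}(Z)^{\bar Q^{*}}\oplus\ZZ^{16}$ of $\mathrm{Pic}(Y_4)$, proving (a).

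For (b) one simply tracks the same centres. For $1\le i<j\le4$ the curve $F_{ij}$ is the strict transform of $N_{ij}$, fixed setwise by $\bar Q$ by (iii), and $F_{0m}$ is the strict transform of the exceptional curve over $p_m$, preserved because $\bar Q$ fixes $p_m$ and the twelve infinitely near centres; so $f_Q$ stabilises all ten curves $F_{ij}$. Each of the twelve curves $F_{(0m)(ab)}$ is the exceptional divisor of a blow-up centred at one of those $f_Q$-fixed tangent directions, hence is stabilised. Finally $F_{(12)(34)},F_{(13)(24)},F_{(14)(23)}$ are the exceptional divisors over $q_1,q_2,q_3$, and by (ii) $f_Q$ sends them to the strict transforms of the lines $K_{ij}$, which are distinct curves (for instance their classes in $\mathrm{Pic}(Y_4)$ differ), so these three are not stabilised — exactly the decoration in \ref{fig6}. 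The main obstacle throughout is the lifting step: one must make sure $Q$ becomes \emph{biregular}, not merely birational, on $Y_4$, which reduces to checking that blowing up $q_1,q_2,q_3$ already resolves $Q$ (no further base points arise) and that $Q$ fixes the twelve infinitely near centres over the $p_m$; both follow from the explicit formulas (i)–(iii), but they are precisely what singles out this blow-up of $\PP^2$ as the right model.
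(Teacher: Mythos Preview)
Your argument is correct, and it takes a genuinely different route from the paper's.

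The paper proceeds through $S_5$: it lifts $Q$ to a birational self-map $\overline{Q}$ of $S_5$, observes that the indeterminacy locus consists of the three points $\overline q_1,\overline q_2,\overline q_3$ (which happen to be among the fifteen centres of $m:Y_4\to S_5$), produces a morphism $r_Q:Y_4\to S_5$ resolving $\overline{Q}$, and then invokes the universal property of the blow-up $m$ to factor $r_Q$ through $m$, obtaining $f_Q$. For the rank-$19$ statement the paper first records the $22$ curves in the extended Petersen graph that $f_Q$ visibly stabilises, and then supplements these with two further fixed classes $F_{(14)(23)}-F_{(12)(34)}$ and $F_{(14)(23)}-F_{(13)(24)}$, obtained from the conic fibration on $S_5$ with singular fibres $E_{12}+E_{34}$, $E_{13}+E_{24}$, $E_{14}+E_{23}$.

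You instead factor through the degree-six del Pezzo $Z=\mathrm{Bl}_{q_1,q_2,q_3}(\PP^2)$, where the standard quadratic Cremona is already biregular, and then lift along the tower $Y_4\to Z$ by verifying that all sixteen remaining centres (the four points $p_m$ and the twelve infinitely near tangent directions) are fixed by $\overline{Q}$. Your observation that $dQ_{p_m}$ fixes three distinct lines in $T_{p_m}\PP^2$ and is therefore a scalar is the key step that makes the lifting immediate; the paper avoids this by arguing on the $S_5$ side, where no infinitely near centres appear. Your rank-$19$ computation via the splitting $\mathrm{Pic}(Y_4)\cong\mathrm{Pic}(Z)\oplus\ZZ^{16}$ and the explicit formula for $\overline{Q}^*$ on $\mathrm{Pic}(Z)$ is cleaner than the paper's curve-counting plus conic-fibration argument, and it identifies the $(-1)$-eigenline $\ZZ\,(e_1+e_2+e_3-H)$ directly, which is useful for the subsequent Corollary on the hyperbolic reflection. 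The paper's route, on the other hand, stays closer to the original construction $Y_4\to S_5\to\PP^2$ and does not require reordering the blow-ups or the tangent-direction argument.
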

\begin{proof}
Recall that $S_5=\mathrm{Bl}_P(\PP^2)$ for $P=\{p_1,p_2,p_3,p_4\}$. Consider the birational morphism $S_5\ra \PP^2$. As in the beginning of Section \ref{3}, denote by $E_{ij}$ the strict transform of $N_{ij}$ for $\{i,j\}\in \{1,2,3,4\}$ and let $E_{0i}$ be the exceptional curve mapping to the point $p_i$ for $i\in \{1,2,3,4\}$. Denote also  by $\ol{q}_1$, $\ol{q}_2$, $\ol{q}_3$ preimages under $S_5\ra \PP^2$ of points $q_1$, $q_2$, $q_3$. Since
$$\{q_1\}=N_{14}\cap N_{23},\,\{q_2\}=N_{12}\cap N_{34},\,\{q_3\}=N_{13}\cap N_{24}$$
we derive that
$$\{\ol{q}_1\}=E_{14}\cap E_{23},\,\{\ol{q}_2\}=E_{12}\cap E_{34},\,\{\ol{q}_3\}=E_{13}\cap E_{24}$$
Finally let $\ol{K}_{ij}$ for every $\{i,j\}\in \{1,2,3\}$ be the strict transform of $K_{ij}$ to $S_5$. Clearly $Q$ lifts to a birational automorphism $\ol{Q}$ of $S_5$ and $\ol{Q}$ has the following properties \textbf{(1)}-\textbf{(3)} corresponding to properties \textbf{(i)}-\textbf{(iii)} of $Q$. 
\begin{enumerate}[label=\textbf{(\arabic*)}, leftmargin=*]
\item $\ol{Q}$ is undefined precisely at points $\ol{q}_1$, $\ol{q}_2$, $\ol{q}_3$ and blow up of $\ol{q}_1$, $\ol{q}_2$, $\ol{q}_3$ resolves the indeterminacy of $\ol{Q}$.
\item $\ol{Q}(\ol{K}_{ij}\setminus  \{\ol{q}_1,\ol{q}_2,\ol{q_3}\})=\ol{q}_k$ for $\{i,j\}\subseteq \{1,2,3\}$ and $\{k\}=\{1,2,3\}\setminus \{i,j\}$.
\item $\ol{Q}(E_{ij}\setminus \{\ol{q}_1,\ol{q}_2,\ol{q_3}\})\subseteq E_{ij}$ for every $\{i,j\}\in \{0,1,2,3,4\}$.
\end{enumerate}
Since $Y_4$ is obtained from $S_5$ via blowing up intersection points of configuration $\{E_{ij}\}$ for $0\leq i<j\leq 4$ and due to 
$$\{\ol{q}_1\}=E_{14}\cap E_{23},\,\{\ol{q}_2\}=E_{12}\cap E_{34},\,\{\ol{q}_3\}=E_{13}\cap E_{24}$$
we derive using \textbf{(1)} that there exists a morphism $r_Q:Y_4\ra S_5$ such that the following diagram 
\begin{center}
\begin{tikzpicture}
[description/.style={fill=white,inner sep=2pt}]
\matrix (m) [matrix of math nodes, row sep=3em, column sep=2em,text height=1.5ex, text depth=0.25ex] 
{  Y_4  &      \\
S_5 & S_5\\} ;
\path[densely dotted,->,font=\scriptsize]  
(m-2-1) edge node[below] {$\ol{Q} $} (m-2-2);
\path[->,font=\scriptsize]  
(m-1-1) edge node[left] {$m $} (m-2-1)
(m-1-1) edge node[auto] {$ r_Q$} (m-2-2);
\end{tikzpicture}
\end{center}
is commutative. Obviously $r_{Q}$ contracts all curves $F_{(ij)(kl)}$ for $\{i,j\}$, $\{k,l\}\subseteq \{0,1,2,3,4\}$, $\{i,j\}\cap \{k,l\}=\emptyset$ except $F_{(12)(34)}$, $F_{(13)(24)}$, $F_{(14)(23)}$ and according to \textbf{(2)} preimage under $r_{Q}$ of each of points $\ol{q}_1$, $\ol{q}_2$, $\ol{q}_3$ is one-dimensional. Thus preimage under $r_{Q}$ of each intersection point of configuration $\{E_{ij}\}$ for $0\leq i<j\leq 4$ is one-dimensional. Therefore, using \cite[Chapter 4, Proposition 5.3]{HAR}, we derive that $r_{Q}$ factors through blow up of $S_5$ at all intersection points of configuration $\{E_{ij}\}_{0\leq i<j\leq 4}$. Since this blow up is $Y_4$, we derive that there exists a morphism $f_{Q}$ such that the following diagram
\begin{center}
\begin{tikzpicture}
[description/.style={fill=white,inner sep=2pt}]
\matrix (m) [matrix of math nodes, row sep=3em, column sep=2em,text height=1.5ex, text depth=0.25ex] 
{  Y_4  & Y_4      \\
S_5 & S_5\\} ;
\path[densely dotted,->,font=\scriptsize]  
(m-2-1) edge node[below] {$\ol{Q} $} (m-2-2);
\path[->,font=\scriptsize]  
(m-1-1) edge node[left] {$m $} (m-2-1)
(m-1-2) edge node[auto] {$ m$} (m-2-2)
(m-1-1) edge node[auto] {$ f_Q$} (m-1-2);
\end{tikzpicture}
\end{center}
is commutative. Next according to the fact that as a rational map $Q\cdot Q$ is equal to $1_{\PP^2}$, we derive that as a rational map $f_Q\cdot f_Q$ is equal to $1_{Y_4}$. Since $f_Q$ is a morphism, we deduce that $f_{Q}\cdot f_{Q}=1_{Y_4}$ i.e. $f_Q$ is an algebraic automorphism and $f_{Q}^2=1_{Y_4}$. If $f_{Q}$ is trivial, then $Q$ will be trivial and this is not the case. Hence $f_{Q}$ is a nontrivial involution of $Y_4$.\\
Now we show that $f_Q^*:\mathrm{Pic}(Y_4)\ra \mathrm{Pic}(Y_4)$ acts as identity on some sublattice of $\mathrm{Pic}(Y_4)$ of rank $19$.  According to \textbf{(1)} and \textbf{(3)}, we derive that $f_{Q}$ stabilizes all curves $\{F_{ij}\}_{0\leq i<j\leq 4}$ and stabilizes all curves $\{F_{(ij)(kl)}\}$ for $\{i,j\}$, $\{k,l\}\subseteq \{0,1,2,3,4\}$, $\{i,j\}\cap \{k,l\}=\emptyset$ except $F_{(12)(34)}$, $F_{(13)(24)}$, $F_{(14)(23)}$. Thus $f^*_Q$ acts as identity on a sublattice of $\mathrm{Pic}(Y_4)$ generated by
$$\{F_{ij}\}_{0\leq i<j\leq 4},\,\{F_{(ij)(kl)}\mid 0\leq i<j\leq 4,\,0\leq k<l\leq 4,\{i,j\}\cap \{k,l\}=\emptyset \}\setminus \{F_{(12)(34)},F_{(13)(24)},F_{(14)(23)}\}$$
Next note that similarly to the situation in Proposition \ref{transc}, there exists a fibration $S_5\ra \PP^1$ with general fiber being smooth, rational curve having three singular fibers 
$$E_{14}+E_{23},\,E_{12}+E_{34},\,E_{13}+E_{24}$$
Precomposing this fibration with $m:Y_4\ra S_5$, we obtain a fibration $Y_4\ra \PP^1$ having precisely three singular fibers  $F_{(14)}+F_{(23)}+2F_{(14)(23)},\,F_{(12)}+F_{(34)}+2F_{(12)(34)},\,F_{(13)}+F_{(24)}+2F_{(13)(24)}$. Thus we have linear equivalences
$$2F_{(14)(23)}-2F_{(12)(34)}\sim \left(F_{(14)}+F_{(23)}\right)-\left(F_{(12)}+F_{(34)}\right)$$
$$2F_{(14)(23)}-2F_{(13)(24)}\sim \left(F_{(14)}+F_{(23)}\right)-\left(F_{(13)}+F_{(24)}\right)$$
Since $f_Q$ stabilizes curves $\{F_{ij}\}_{0\leq i<j\leq 4}$, we deduce that $f^*_Q$ acts as identity on 
$$F_{(14)(23)}-F_{(12)(34)},\, F_{(14)(23)}-F_{(13)(24)}$$ 
Construction of $Y_4$ via birational morphisms $m:Y_4\ra S_5$ and $S_5\ra \PP^2$ shows that the sublattice of $\mathrm{Pic}(Y_4)$ generated by 
$$\{F_{ij}\}_{0\leq i<j\leq 4},\,\{F_{(ij)(kl)}\mid 0\leq i<j\leq 4,\,0\leq k<l\leq 4,\{i,j\}\cap \{k,l\}=\emptyset \}\setminus \{F_{(12)(34)},F_{(13)(24)},F_{(14)(23)}\}$$
and by $F_{(14)(23)}-F_{(12)(34)},\, F_{(14)(23)}-F_{(13)(24)}$ is of rank $19$. 
\end{proof}
\noindent
Denote by $\tilde{f}_Q:X_4\ra X_4$ any lift of $f_Q:Y_4\ra Y_4$ along homomorphism described in Theorem \ref{exactaut}. 

\begin{corollary}\label{reflectionX_4}
Consider the model of a hyperbolic space described in Corollary \ref{hyperbolic} specified to $X_4$. The linear map
$$\tilde{f}_Q^*:\mathrm{H}^{1,1}_{\RR}(X_4)\ra \mathrm{H}^{1,1}_{\RR}(X_4)$$
induced by a lift $\tilde{f}_Q:X_4\ra X_4$ of $f_Q:Y_4\ra Y_4$ gives rise to a reflection of this hyperbolic space.
\end{corollary}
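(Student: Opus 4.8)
The plan is to establish the analogous statement for $f_Q$ on $Y_4$ and then transport it to $X_4$ along the pullback $\pi^*$. Concretely, I would first show that $f_Q^*$ acts on $\mathrm{Pic}(Y_4)\otimes_{\ZZ}\RR$ as a hyperbolic reflection in the sense of Proposition \ref{reflection}, and then exploit the commutation $\pi\circ\tilde f_Q=f_Q\circ\pi$ together with the fact that $\pi^*$ is, up to the scalar $\deg\pi=2$, an isometry onto $\mathrm{H}^{1,1}_{\RR}(X_4)$.

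For the first step, put $V=\mathrm{Pic}(Y_4)\otimes\RR$; by Proposition \ref{linearization} one has $\dim_{\RR}V=\rho(Y_4)=20$, and by the Hodge index theorem the intersection form on $V$ has signature $(1,19)$. Proposition \ref{quadratic}(a) gives a rank $19$ sublattice $M\subseteq\mathrm{Pic}(Y_4)$ fixed by $f_Q^*$, so the $(+1)$-eigenspace of $f_Q^*$ on $V$ contains the hyperplane $M\otimes\RR$; it is not all of $V$, because by Proposition \ref{quadratic}(b) the automorphism $f_Q$ does not stabilize the $(-1)$-curve $F_{(12)(34)}$, and on the rational surface $Y_4$ the class of an irreducible $(-1)$-curve has that curve as its only effective representative (as $h^0=1$, $h^1=h^2=0$ for such a class), whence $f_Q^*[F_{(12)(34)}]\neq[F_{(12)(34)}]$ and $f_Q^*\neq 1_V$. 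Hence the $(+1)$-eigenspace is exactly $M\otimes\RR$, $f_Q^*$ is an order-two isometry fixing a hyperplane, and a direct computation identifies it with the orthogonal reflection $x\mapsto x-\tfrac{2(e,x)}{(e,e)}e$ in the line $\RR e=(M\otimes\RR)^{\perp}$ (non-degeneracy of the form forces $(e,e)\neq 0$). To pin down the sign I would use that $f_Q^*$ preserves the ample cone: for an ample class $A$ the vector $A+f_Q^*A$ is ample and $f_Q^*$-fixed, so it lies in $M\otimes\RR$ and has positive square; thus the form on $M\otimes\RR$ has signature $(1,18)$, and on $\RR e=(M\otimes\RR)^{\perp}$ it is negative definite, i.e.\ $(e,e)<0$. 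By Proposition \ref{reflection}, $f_Q^*$ induces a reflection of the hyperbolic space attached to $V$.

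For the second step, $\tilde f_Q$ satisfies $\pi\circ\tilde f_Q=f_Q\circ\pi$, so $\tilde f_Q^*\circ\pi^*=\pi^*\circ f_Q^*$ on real cohomology. By Proposition \ref{linearization}, $\pi^*\colon\mathrm{Cl}(Y_4)\to\mathrm{Cl}(X_4)$ is injective with image of the full rank $20$, so $\pi^*\otimes\RR\colon V\to S_{X_4}\otimes\RR$ is an isomorphism multiplying the intersection form by $2$; and since $\rho(X_4)=20=h^{1,1}(X_4)$ (Propositions \ref{linearization} and \ref{transc}) one has $S_{X_4}\otimes\RR=\mathrm{H}^{1,1}_{\RR}(X_4)$, the real vector space underlying the hyperbolic model of Corollary \ref{hyperbolic} for $X_4$. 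Conjugating the reflection formula for $f_Q^*$ through $\pi^*$ — the scalar $2$ cancelling in the ratio $\tfrac{(e,x)}{(e,e)}$ — I would obtain $\tilde f_Q^*(y)=y-\tfrac{2(\tilde e,y)}{(\tilde e,\tilde e)}\tilde e$ for all $y\in\mathrm{H}^{1,1}_{\RR}(X_4)$, with $\tilde e=\pi^*e$ and $(\tilde e,\tilde e)=2(e,e)<0$; Proposition \ref{reflection} then yields that $\tilde f_Q^*$ induces a reflection of the hyperbolic space associated with $\mathrm{H}^{1,1}_{\RR}(X_4)$.

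The main obstacle is the first step: one must verify simultaneously that the $(-1)$-eigenspace of $f_Q^*$ on $V$ is exactly one-dimensional — which rests on the nontriviality supplied by Proposition \ref{quadratic}(b) together with uniqueness of $(-1)$-curves in their divisor classes — and that a generator $e$ of this eigenspace has negative self-intersection, for which the ample-class and Hodge index argument above is needed. Once $f_Q^*$ is known to be a hyperbolic reflection on $V$, carrying this over to $X_4$ via $\pi^*$ is routine bookkeeping.
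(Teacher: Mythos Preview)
Your proof is correct and follows essentially the same route as the paper: a hyperplane of fixed vectors from Proposition~\ref{quadratic}(a), combined with the isometry property and preservation of the ample cone, forces $\tilde f_Q^*$ to be a hyperbolic reflection in the sense of Proposition~\ref{reflection}. The only differences are organizational---you establish the reflection on $\mathrm{Pic}(Y_4)\otimes\RR$ and then transport via $\pi^*$, whereas the paper argues directly on $\mathrm{H}^{1,1}_{\RR}(X_4)$---and that you supply two details the paper leaves implicit, namely the nontriviality of $f_Q^*$ on $\mathrm{Pic}(Y_4)$ (via uniqueness of $(-1)$-curves in their linear equivalence class) and the Hodge-index argument showing $(e,e)<0$.
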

\begin{proof}
According to Proposition \ref{quadratic}, action of an automorphism $\tilde{f}_Q$ on $\mathrm{H}^{1,1}_{\RR}(X_4)$ leaves  invariant vectors of some subspace of codimension one. Moreover, $\tilde{f}_Q^*:\mathrm{H}^{1,1}_{\RR}(X_4)\ra \mathrm{H}^{1,1}_{\RR}(X_4)$ is a linear isometry. Thus $\tilde{f}^*_Q$ is an isometric linear endomorphism of $\mathrm{H}^{1,1}_{\RR}(X_4)$ leaving  invariant vectors of a hyperplane and preserving the ample cone. This implies that there exists a vector $e\in \mathrm{H}^{1,1}_{\RR}(X_4)$ such that $(e,e)<0$ and 
$$\tilde{f}^*_Q(x)=x-\frac{2(e,x)}{(e,e)}e$$
By Proposition \ref{reflection}, $\tilde{f}^*_Q$ induces a reflection of the hyperbolic space described in Corollary \ref{hyperbolic}.
\end{proof}

\begin{corollary}\label{main4}
Reflection induced by $\tilde{f}_Q^*$ on the hyperbolic space associated with $\mathrm{H}^{1,1}_{\RR}(X_4)$ is conjugate as an element of $O(S_{X_4})$ to reflections contained in $\mathscr{S}_1$ according to Vinberg's notation \cite[Section 2.2]{VIN}. 
\end{corollary}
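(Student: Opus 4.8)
The strategy is to reduce the statement to one lattice-theoretic fact: \emph{every hyperbolic reflection lying in $O_+(S_{X_4})$ has a root of square $-4$ and divisibility $2$, and all such roots form a single $O(S_{X_4})$-orbit}. Granting this, the Corollary is immediate: by Vinberg's description \cite[Section 2.2]{VIN} the generators of the five free factors of $O_+(S_{X_4})$ are the reflections of $\mathscr{S}_1$, hence hyperbolic reflections lying in $O_+(S_{X_4})$, and $\tilde f_Q^*$ is such a reflection by Corollary \ref{reflectionX_4}; so $\tilde f_Q^*$ and the reflections of $\mathscr{S}_1$ have $O(S_{X_4})$-equivalent roots, hence are $O(S_{X_4})$-conjugate.

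First I would check that $\tilde f_Q^*$ really is a reflection \emph{in $O_+(S_{X_4})$} and not merely of the ambient real hyperbolic space. Since $X_4$ has maximal Picard rank, $\mathrm{H}^{1,1}_{\RR}(X_4)=S_{X_4}\otimes_{\ZZ}\RR$, so $\tilde f_Q^*$ preserves the lattice $S_{X_4}$; being induced by an automorphism of $X_4$ it preserves the ample cone, so $\tilde f_Q^*\in O_+(S_{X_4})$. Its square on $S_{X_4}$ is the identity, because by Theorem \ref{exactaut} the two lifts of the involution $f_Q$ differ by $\sigma$ and $\sigma^*=\mathrm{id}_{S_{X_4}}$ by Proposition \ref{linearization}. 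Combined with Corollary \ref{reflectionX_4} (the fixed locus is a hyperplane) this shows $\tilde f_Q^*=s_e$ for a primitive $e\in S_{X_4}$ with $e^2<0$.

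Second I would classify the admissible roots. If $v\in S_{X_4}$ is primitive, $v^2<0$, and $s_v(x)=x-\tfrac{2(v,x)}{v^2}v$ lies in $O(S_{X_4})$, then $v^2\mid 2\,\mathrm{div}(v)$ and $\mathrm{div}(v)\mid v^2$, while $\mathrm{div}(v)$ divides the exponent $2$ of $D_{X_4}\cong(\ZZ/2)^2$ (the discriminant group, identified in the Remark following Proposition \ref{transc}); this leaves only $(v^2,\mathrm{div}(v))\in\{(-2,1),\,(-2,2),\,(-4,2)\}$. In the two $(-2)$-cases $s_v$ belongs to the Weyl group $W$ generated by all $(-2)$-reflections, and $W\cap O_+(S_{X_4})=\{\mathrm{id}\}$ since $W$ acts simply transitively on the chambers of the positive cone whereas $O_+(S_{X_4})$ stabilises the ample chamber. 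Hence any reflection inside $O_+(S_{X_4})$ has $v^2=-4$ and $\mathrm{div}(v)=2$; for such $v$ the class of $\tfrac12 v$ in $D_{X_4}$ carries discriminant-quadratic value $v^2/4\equiv 1\pmod{2\ZZ}$, and a direct computation with the discriminant form of $S_{X_4}$ — the negative of that of $T_{X_4}=\langle 2\rangle\oplus\langle 2\rangle$ — shows there is exactly one such element of $D_{X_4}$. Since $S_{X_4}\cong U\oplus E_8^{\oplus 2}\oplus\langle -2\rangle^{\oplus 2}$ splits off a hyperbolic plane, Eichler's criterion applies and the $O(S_{X_4})$-orbit of a primitive vector is determined by its square together with its image in $D_{X_4}$; so all roots with $v^2=-4$, $\mathrm{div}(v)=2$ are $O(S_{X_4})$-equivalent, which completes the argument.

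The main obstacle is not the orbit computation but pinning down the input: one must be sure that the elements Vinberg groups into $\mathscr{S}_1$ are genuine hyperbolic reflections lying in $O_+(S_{X_4})$ (so that the classification above fixes their roots), and that $S_{X_4}$ has the asserted isometry type so that Eichler's criterion is legitimately available — both classical, but to be stated with care. If one prefers a hands-on route, the root $e$ can be produced explicitly: the remaining blow-ups in $Y_4\to S_6=\mathrm{Bl}_{q_1,q_2,q_3}\PP^2$ are centred at $Q$-fixed points by properties (i)--(iii) of $Q$, so $f_Q^*$ acts on $\mathrm{Pic}(Y_4)$ as the standard Cremona involution on the pullback of $\mathrm{Pic}(S_6)$ and trivially elsewhere; its $(-1)$-eigenline is spanned by $w=H-E_{q_1}-E_{q_2}-E_{q_3}$ with $w^2=-2$, so $e=\pi^*w$ has $e^2=2w^2=-4$, is automatically primitive (being of square $-4$ in an even lattice), and has $\mathrm{div}(e)=2$ by integrality of $s_e=\tilde f_Q^*$; one could then compare $e$ directly with the vectors of $\mathscr{S}_1$ listed in \cite{VIN}.
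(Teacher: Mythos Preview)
Your argument is correct, but the route is quite different from the paper's. The paper's proof is a two-line citation: Vinberg \cite[Section 2.2]{VIN} already classifies the linear reflections in $O(S_{X_4})$ into three conjugacy classes $\mathscr{S}_2'$, $\mathscr{S}_2''$, $\mathscr{S}_1$, and shows that only the $\mathscr{S}_1$-class is realised by automorphisms of $X_4$; since $\tilde f_Q$ is an automorphism and $\tilde f_Q^*$ is a reflection (Corollary \ref{reflectionX_4}), the conclusion is immediate. You instead \emph{rederive} the relevant piece of Vinberg's classification: you enumerate the possible pairs $(v^2,\mathrm{div}(v))$ for roots of integral reflections, use the chamber structure to discard the $(-2)$-cases from $O_+(S_{X_4})$, compute the discriminant form to see that the remaining $(-4,2)$-roots hit a single element of $D_{X_4}$, and invoke Eichler's criterion to collapse them to one $O(S_{X_4})$-orbit.

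Both proofs rest on the same pivot---$\tilde f_Q^*\in O_+(S_{X_4})$ because it comes from an automorphism---but yours trades a black-box citation for a transparent lattice computation, at the price of needing the isometry type $S_{X_4}\cong U\oplus E_8^{\oplus 2}\oplus\langle-2\rangle^{\oplus 2}$ and the applicability of Eichler with a single hyperbolic summand (both fine, and you flag them). Your closing explicit description of the root $e=\pi^*(H-E_{q_1}-E_{q_2}-E_{q_3})$ is a nice bonus that the paper does not attempt; it gives a concrete handle on the reflection independent of any orbit argument.
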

\begin{proof}
In \cite[Section 2.2]{VIN} author claims that there are three conjugacy classes of linear reflections inside $O(S_{X_4})$ that induce reflections of the hyperbolic space associated with $\mathrm{H}^{1,1}(X_4)$. He chooses three sets of these reflections $\mathscr{S}_2'$, $\mathscr{S}''_2$ and $\mathscr{S}_1$ each contained in precisely one conjugacy class inside $O(S_{X_4})$. Then he shows that only reflections conjugate to those in the set $\mathscr{S}_1$ induce automorphisms of $X_4$.
\end{proof}

\section*{Acknowledgements}

I would like to express my gratitude to my advisor Jarosław Wiśniewski for his constant support and patient guidance. I wish to thank Joachim Jelisiejew who made valuable editorial and mathematical suggestions that vastly improve the presentation of the material.

\bibliography{mybib}{}
\bibliographystyle{amsalpha}

\end{document}